\newcommand{\isomto}{\overset{\sim}{\rightarrow}}
\newcommand{\bA}{\mathbb{A}}
\newcommand{\CC}{\mathbb{C}}
\newcommand{\ZZ}{\mathbb{Z}}
\newcommand{\TT}{\mathbb{T}}
\newcommand{\QQ}{\mathbb{Q}}
\newcommand{\FF}{\mathbb{F}}
\newcommand{\cO}{\mathcal{O}}
\newcommand{\fp}{\mathfrak{p}}
\newcommand{\fm}{\mathfrak{m}}
\newcommand{\cM}{\mathcal{M}}
\newcommand{\cK}{\mathcal{K}}
\newcommand{\cA}{\mathcal{A}}
\newcommand{\cZ}{\mathcal{Z}}
\newcommand{\cH}{\mathcal{H}}
\newcommand{\cC}{\mathcal{C}}
\DeclareMathOperator{\Hom}{Hom}
\DeclareMathOperator{\Ind}{Ind}
\DeclareMathOperator{\cInd}{c-Ind}
\DeclareMathOperator{\End}{End}
\DeclareMathOperator{\Rep}{Rep}
\DeclareMathOperator{\Frac}{Frac}
\DeclareMathOperator{\diag}{diag}
\DeclareMathOperator{\Gal}{Gal}
\DeclareMathOperator{\ch}{char}
\newtheorem{thm}{Theorem}[section]
\newtheorem{lemma}[thm]{Lemma}
\newtheorem{prop}[thm]{Proposition}
\newtheorem{cor}[thm]{Corollary}
\newtheorem{conj}[thm]{Conjecture}
\newtheorem*{thm*}{Theorem}
\begin{document}

\title[The universal unramified module for $GL(n)$]{The universal unramified module for $GL(n)$ and the Ihara conjecture}

\author{Gilbert Moss}
\subjclass[2010]{11F33, 22E50, 22E55}

\date{\today}
\maketitle
\begin{abstract}
Let $F$ be a finite extension of $\mathbb{Q}_p$. Let $W(k)$ denote the Witt vectors of an algebraically closed field $k$ of characteristic $\ell$ different from $p$, and let $\cZ$ be the spherical Hecke algebra for $GL_n(F)$ over $W(k)$. Given a Hecke character $\lambda:\cZ\to R$, where $R$ is an arbitrary $W(k)$-algebra, we introduce the universal unramified module $\cM_{\lambda,R}$. We show $\cM_{\lambda,R}$ embeds in its Whittaker space and is flat over $R$, resolving a conjecture of Lazarus. It follows that $\cM_{\lambda,k}$ has the same semisimplification as any unramified principal series with Hecke character $\lambda$. 

In the setting of mod-$\ell$ automorphic forms of \cite{cht}, Clozel, Harris, and Taylor formulate a conjectural analogue of Ihara's lemma. It predicts that every irreducible submodule of a certain cyclic module $V$ of mod-$\ell$ automorphic forms is generic. Our result on the Whittaker model of $\cM_{\lambda,k}$ reduces the Ihara conjecture to the statement that $V$ is generic.
\end{abstract}

\section{The universal unramified module}

\subsection{Main results} Let $F$ be a finite extension of $\QQ_p$ with residue field of order $q$, and ring of integers $\cO_F$. Let $G:=GL_n(F)$ and let $K:=GL_n(\cO_F)$. Let $k$ be an algebraically closed field of positive characteristic $\ell\neq p$, and $\ell\neq 2$.

Given a $k[G]$-module $\pi$ (always presumed to be smooth), the spherical Hecke algebra $k[K\backslash G/K]$ acts on the submodule $\pi^K$ of $K$-fixed vectors via double-coset operators. Denote this action by $*$.

Let $k[G/K]$ denote the space of finitely supported functions on the set $G/K$, and let $G$ act by left-translation. Then $k[G/K]$ admits a natural action of $k[K\backslash G/K]$ and, in fact, the map $k[K\backslash G/K]\to \End_{k[G]}(k[G/K])$ is an isomorphism (\cite[Prop 1.16]{lazarus}). 

Let $\lambda:k[K\backslash G/K]\to k$ be a homomorphism. We define the universal unramified module for $\lambda$:
$$\cM_{\lambda,k}:=k[G/K]\otimes_{k[K\backslash G/K],\lambda}k.$$ It is universal in the following sense: for any $k[G]$-module $V$, denote
$$\pi^{K,\lambda}:=\{v\in \pi^K : z*v = \lambda(z)v\text{ for all }z\in k[K\backslash G/K]\},$$ then there is an isomorphism $\Hom_{k[G]}(\cM_{\lambda,k},\pi) \cong \pi^{K,\lambda}$, where the map $1_K\otimes 1\mapsto v$ corresponds to $v\in \pi^{K,\lambda}$. 

The module $\cM_{\lambda,k}$ has been studied in \cite{kato_eigen, serre_letters, lazarus, lazarus_these, bellaiche_these, bel_ot, cht, gk_univ}. In \cite{serre_letters,cht}, its properties were applied in the global setting of mod-$\ell$ automorphic representations. The goal of this article is to examine the structure of $\cM_{\lambda,k}$ and its Whittaker model, and apply our findings toward several outstanding questions in both the local and global settings.

Let $B=TU$ be the standard Borel subgroup, where $T$ is the diagonal torus. There is an isomorphism of rings, due to Satake, 
$$k[K\backslash G/K]\cong k[T/T(\cO_F)]^{W_G},$$ where $W_G$ is the Weyl group of $G$, and the ring structure on $k[K\backslash G/K]$ is given by convolution. Any map $\lambda:k[K\backslash G/K]\to k$ corresponds to a Weyl orbit of unramified characters $\chi:T\to k^{\times}$, and we can ask about the connection between $\cM_{\lambda,k}$ and the associated unramified principal series representations 
$$i_B^G\chi:=\{\phi:G\to k \text{ locally constant}:\phi(tug)=\delta_B^{-1}(t)\chi(t)\phi(g), t\in T, u\in U, g\in G\}.$$ The modulus character $\delta_B:B\to k^{\times}$ is defined as
$$\delta_B(b):= [bKb^{-1}:bKb^{-1}\cap K] / [K: bKb^{-1}\cap K],$$ for any choice of compact open subgroup $K$ with pro-order prime to $\ell$ (\cite[I.2.6]{vig}). The following conjecture was made by Lazarus.
\begin{conj}[\cite{lazarus}, \S 2 Remarque]\label{lazarus:conj}
There is an equality of Jordan--Holder multisets $\text{JH}(\cM_{\lambda,k})=\text{JH}(i_B^G\chi)$.
\end{conj}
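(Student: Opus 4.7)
The plan is a deformation argument reducing the equality in characteristic $\ell$ to the analogous statement in characteristic zero, using the $W(k)$-flatness of $\cM_{\lambda,R}$ to interpolate between the two. By the Satake isomorphism the spherical Hecke algebra over $W(k)$ is a (localized) polynomial ring, so any $\lambda:\cZ\to k$ lifts to a homomorphism $\tilde{\lambda}:\cZ\to W(k)$, corresponding under Satake to a $W(k)$-valued unramified character $\tilde{\chi}$ whose mod-$\ell$ reduction represents the Weyl orbit of $\chi$. I would then form the integral universal unramified module $\cM_{\tilde{\lambda},W(k)}$, which is $W(k)$-flat by hypothesis, and the integral principal series $i_B^G\tilde{\chi}_{W(k)}$, which is $W(k)$-flat as a space of locally constant $W(k)$-valued functions with prescribed $B$-equivariance. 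Setting $E:=\Frac W(k)$, the universal property produces canonical $G$-maps from $\cM_{\tilde{\lambda},W(k)}$ into $i_B^G\tilde{\chi}_{W(k)}$ and into its Whittaker model.

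Over $E$ the Jordan--Holder equality $\text{JH}(\cM_{\tilde{\lambda},E})=\text{JH}(i_B^G\tilde{\chi}_E)$ is well known in the classical characteristic-zero theory: every irreducible $E[G]$-module with nonzero $K$-fixed vector occurs as a subquotient of some $i_B^G\tilde{\chi}^w$ with $w\in W_G$, all such principal series share the same Jordan--Holder content via intertwining-operator arguments, and the one-dimensional $K$-invariants on both sides pin down the multiplicities. To descend from $E$ to $k$ I would invoke a Brauer--Nesbitt-type principle for smooth admissible finite-length $G$-representations, in the form developed by Vigneras: for a $W(k)$-flat $G$-module whose generic fiber is admissible and of finite length, the Jordan--Holder multiset of the reduction mod $\ell$ is determined by that of the generic fiber. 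Applied to both $\cM_{\tilde{\lambda},W(k)}$ and $i_B^G\tilde{\chi}_{W(k)}$, this yields $\text{JH}(\cM_{\lambda,k})=\text{JH}(i_B^G\chi)$.

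The main obstacle is verifying the hypotheses required by the Brauer--Nesbitt step, and this is precisely where the Whittaker embedding enters. A priori $\cM_{\tilde{\lambda},W(k)}$ is defined as a cyclic quotient of the very large module $W(k)[G/K]$, so neither admissibility nor finite length of its fibers is visible from the definition. The Whittaker embedding realizes $\cM_{\tilde{\lambda},W(k)}$ inside an explicit space of Whittaker functions in which the spherical Whittaker vector generates, under Hecke translation, a manifestly finitely generated and admissible $W(k)$-submodule; this produces the finiteness required to invoke the Brauer--Nesbitt formalism and to compare integral structures compatibly with reduction mod $\ell$. Thus the two ingredients announced in the abstract combine complementarily: flatness makes the interpolation meaningful, while the Whittaker embedding furnishes the control on fibers needed to compare Jordan--Holder multisets on both sides.
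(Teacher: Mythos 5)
Your overall skeleton is exactly the paper's: lift $\lambda$ to $\tilde{\lambda}:\cZ\to W(k)$, use flatness over $W(k)$ (together with admissibility) to make $\cM_{\tilde{\lambda},W(k)}$ an integral structure, compare with the integral structure $i_B^G\tilde{\chi}$ whose generic fibers have the same Jordan--Holder content by the characteristic-zero theory, and descend by the Brauer--Nesbitt principle of Vign\'eras. Your treatment of the characteristic-zero step (citing the classical equality of $\text{JH}(\cM_{\tilde{\lambda},E})$ and $\text{JH}(i_B^G\tilde{\chi}_E)$, already in Lazarus) is precisely the alternative the paper itself mentions parenthetically; the paper instead chooses $\tilde{\chi}$ satisfying the ``does-not-proceed'' condition so that the Whittaker embedding in characteristic zero plus generation by the spherical vector give an actual isomorphism $\cM_{\tilde{\lambda},\overline{\cK}}\cong i_B^G\tilde{\chi}_{\overline{\cK}}$. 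Either route is fine.

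The one genuine gap is your justification of the finiteness hypotheses for Brauer--Nesbitt. You claim the Whittaker embedding realizes $\cM_{\tilde{\lambda},W(k)}$ as a ``manifestly finitely generated and admissible'' submodule of the Whittaker space. Finitely generated, yes (it is cyclic over $W(k)[G]$), but admissibility is not manifest and does not follow from the embedding: $\Ind_U^G\psi$ is very far from admissible, and a cyclic $G$-submodule of it has no a priori reason to be admissible. Admissibility is a separate, nontrivial input; the paper obtains it from Helm's results on the Bernstein center, via the isomorphism $\cZ\cong\End_{W(k)[G]}(W(k)[G/K])$ and the fact that finitely generated smooth $W(k)[G]$-modules are admissible over the center of their block. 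Without admissibility you cannot conclude that the $U$-isotypic (or $H$-fixed) components of $\cM_{\tilde{\lambda},W(k)}$ are finite free over $W(k)$, so you cannot form the lattice comparison that Brauer--Nesbitt requires. Replace the appeal to the Whittaker model at that point by the Bernstein-center admissibility argument and the proof closes.
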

Conjecture~\ref{lazarus:conj} was already known in many cases: when $n=2$ it follows from results of Serre in \cite{serre_letters}, which are proved with Bruhat--Tits theory. It is proved in \cite{lazarus} when the characteristic $\ell$ is banal for $G$ (i.e. $\ell=0$ or $\ell\nmid \#GL_n(\FF_q)$) and in \cite[Lemma 5.1.4]{cht} for $\ell>n$ and $q\equiv 1$ mod $\ell$. It appears in \cite{bel_ot} for arbitrary $\ell\neq p$ for $GL_3$, again using Bruhat--Tits theory. 
\begin{thm}\label{lazarus:thm}
Conjecture~\ref{lazarus:conj} is true.
\end{thm}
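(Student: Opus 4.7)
The plan is to lift $\lambda$ to mixed characteristic and use flatness to descend from characteristic zero, where the banal case of Lazarus's conjecture is already available. By the Satake isomorphism $\cZ \cong W(k)[T/T(\cO_F)]^{W_G}$ (a polynomial algebra), I lift $\lambda$ to a character $\tilde\lambda:\cZ \to W(k)$; let $\tilde\chi:T \to W(k)^\times$ be an unramified character in the corresponding Weyl orbit, and set $E:=\Frac W(k)$. By the flatness theorem, $\cM_{\tilde\lambda, W(k)}$ is a $W(k)$-lattice in its generic fiber $\cM_{\tilde\lambda_E, E}$ with special fiber $\cM_{\lambda,k}$, while $i_B^G\tilde\chi$ is the analogous $W(k)$-lattice in $i_B^G\tilde\chi_E$.

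The Whittaker embedding, applied over $W(k)$, realizes $\cM_{\tilde\lambda, W(k)}$ as the $W(k)[G]$-submodule of $\Ind_U^G\psi$ generated by the Casselman--Shalika spherical Whittaker function $W_0^{\tilde\lambda}$. For $\tilde\chi$ chosen suitably in its Weyl orbit, $i_B^G\tilde\chi$ is cyclic on its spherical vector, whose Whittaker image is $W_0^{\tilde\lambda}$ up to a unit, so $i_B^G\tilde\chi$ maps onto the \emph{same} cyclic $W(k)[G]$-submodule. In characteristic zero, Lazarus's banal theorem gives $[\cM_{\tilde\lambda_E, E}]=[i_B^G\tilde\chi_E]$ in the Grothendieck group of finite-length $E[G]$-modules; combined with the Whittaker embedding, this exhibits $\cM_{\tilde\lambda, W(k)}$ and $i_B^G\tilde\chi$ as $W(k)$-lattices in a common admissible finite-length $E[G]$-module. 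A Brauer--Nesbitt argument for $p$-adic groups then yields $\text{JH}(\cM_{\lambda,k})=\text{JH}(i_B^G\chi)$ after reducing mod $\ell$.

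The hard part is showing that for a suitable $\tilde\chi$ the Whittaker image of $i_B^G\tilde\chi$ coincides with that of $\cM_{\tilde\lambda, W(k)}$ as $W(k)$-submodules of $\Ind_U^G\psi$. This requires controlling the Casselman--Shalika formula over $W(k)$ and choosing $\tilde\chi$ so that $i_B^G\tilde\chi$ is generated by its spherical vector with nonvanishing Whittaker image, tasks that are delicate in non-banal characteristic. The Whittaker embedding theorem is what makes the matching possible: it pins down the integral structure of $\cM_{\tilde\lambda, W(k)}$ canonically inside the Whittaker space, and transfers the char-$0$ equality of Grothendieck classes to char-$\ell$ via the lattice comparison.
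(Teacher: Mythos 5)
Your overall strategy is the same as the paper's: lift $\lambda$ to $\tilde\lambda:\cZ\to W(k)$, use the flatness theorem (plus admissibility) to see that $\cM_{\tilde\lambda,W(k)}$ and $i_B^G\tilde\chi$ are $W(k)$-integral structures, identify the generic fibers using the characteristic-zero theory, and conclude by Vign\'eras's Brauer--Nesbitt theorem. The one genuine gap is the step you yourself flag as ``the hard part'': you try to identify the two lattices by showing their \emph{integral} Whittaker images inside $\Ind_U^G\psi$ coincide as $W(k)[G]$-submodules, and you do not prove this. This is both unproven and stronger than what Brauer--Nesbitt requires: that theorem only needs the two modules to be integral structures in a \emph{common} (i.e.\ isomorphic) finite-length $\overline{\cK}[G]$-module; it does not need them to coincide inside any ambient space. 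Moreover, merely knowing that the two generic fibers have the same class in the Grothendieck group (your appeal to Lazarus's banal theorem) is not by itself enough to place the two lattices in a common module, so as written your argument does not close.

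The paper's route around this is shorter and you should adopt it: choose $\tilde\chi$ in its Weyl orbit satisfying the ``does-not-proceed'' condition of Emerton--Helm, so that every subrepresentation of $i_B^G\tilde\chi\otimes\overline{\cK}$ is generic. Then the canonical map $\cM_{\tilde\lambda,\overline{\cK}}\to i_B^G\tilde\chi\otimes\overline{\cK}$ sending $1_K\otimes 1$ to the spherical vector is injective by the characteristic-zero case of the Whittaker embedding theorem (since any nonzero kernel would force a non-generic constituent in the image, contradicting that $\cM_{\tilde\lambda,\overline{\cK}}$ embeds in its Whittaker space and has a unique generic constituent), and surjective because for this choice of $\tilde\chi$ the induced representation is generated by its spherical vector. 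This gives an actual isomorphism of the generic fibers, so the two integral structures sit in a common module and Brauer--Nesbitt applies directly, with no need to control the Casselman--Shalika formula integrally or to compare images inside $\Ind_U^G\psi$ over $W(k)$.
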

We prove the conjecture by showing $W(k)[G/K]$ is flat over $W(k)[K\backslash G/K]$, where $W(k)$ is the Witt vectors. This flatness was conjectured by Lazarus \cite[Conjecture 1.0.5]{lazarus_these}, and was previously known for $GL_3$ (\cite[\S1.3]{bel_ot}). We emphasize that Theorem~\ref{lazarus:thm} does not contain an assumption that $\ell$ is banal.

It is natural to ask whether the arrangement of the Jordan--Holder constituents of $\cM_{\lambda,k}$ exhibits a consistent structure as we deform $\lambda$. For example, when $n=2$ it was shown by Serre in \cite{serre_letters} that $\cM_{\lambda,k}$ always has a unique irreducible submodule that is infinite-dimensional.

Fix an additive character $\psi:F\to k^{\times}$ with conductor zero, and let $\psi$ also denote the usual extension of $\psi$ to a nondegenerate character $U\to k^{\times}$. We say a $k[G]$-module $\pi$ is $\emph{generic}$ if $\Hom_{k[G]}(\pi,\Ind_U^G\psi)\neq 0$, where $\Ind_U^G\psi$ is the space $$\{W:G\to k\text{ locally constant}: W(ug)=\psi(u)W(g),\ u\in U,\ g\in G\}.$$ The Shintani formula for spherical Whittaker functions (\cite{shintani}) implies that the map 
\begin{align*}
\text{ev}_1:\Ind_U^G\psi&\to k\\
W &\mapsto W(1)
\end{align*} induces an isomorphism $(\Ind_U^G\psi)^{K,\lambda}\isomto k$ (see Section~\ref{sphericalwhittaker:section}). Let $W_{\lambda}^0\in (\Ind_U^G\psi)^{K,\lambda}$ denote the preimage of $1$. The universal property of $\cM_{\lambda,k}$ gives a canonical morphism $\Lambda:\cM_{\lambda,k} \to \Ind_U^G\psi$ sending $1_K\otimes 1$ to $W_{\lambda}^0$.
\begin{thm}\label{universalmodule:conj:k}
For every $\lambda$, the map $\Lambda: \cM_{\lambda,k} \to \Ind_U^G\psi$ is injective.
\end{thm}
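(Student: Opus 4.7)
The strategy is to deduce Theorem~\ref{universalmodule:conj:k} from an integral injectivity statement over $W(k)$, which in turn will follow from characteristic-zero injectivity via the flatness of $W(k)[G/K]$ over $\cZ_{W(k)}$ established in the proof of Theorem~\ref{lazarus:thm}.

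First, via the Satake isomorphism and Teichm\"uller lifts, I would choose $\tilde\lambda: \cZ_{W(k)} \to W(k)$ reducing modulo $\ell$ to $\lambda$, together with a character $\tilde\psi: F \to W(k)^\times$ lifting $\psi$. Form
\[
\cM_{\tilde\lambda, W(k)} := W(k)[G/K] \otimes_{\cZ_{W(k)}, \tilde\lambda} W(k).
\]
By the flatness of $W(k)[G/K]$ over $\cZ_{W(k)}$ (used to prove Theorem~\ref{lazarus:thm}), the module $\cM_{\tilde\lambda, W(k)}$ is $W(k)$-flat and reduces modulo $\ell$ to $\cM_{\lambda, k}$; moreover, Shintani's formula makes sense integrally, yielding a canonical map $\Lambda_{W(k)}: \cM_{\tilde\lambda, W(k)} \to \Ind_U^G \tilde\psi_{W(k)}$ that reduces to $\Lambda$.

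Over the fraction field $\Frac W(k)$, the generic fiber $\cM_{\tilde\lambda, \Frac W(k)}$ is a characteristic-zero representation controlled by the classical unramified principal series (the banal case of Conjecture~\ref{lazarus:conj}). The classical theory---uniqueness of the Whittaker functional together with the fact that the socle of an unramified principal series in characteristic zero is generic---shows that $\Lambda_{\Frac W(k)}$ is injective. Combining this with the $W(k)$-flatness of $\cM_{\tilde\lambda, W(k)}$ (so that it embeds in its generic fiber) and with the inclusion of free $W(k)$-modules $\Ind_U^G \tilde\psi_{W(k)} \hookrightarrow \Ind_U^G \tilde\psi_{\Frac W(k)}$, one deduces that $\Lambda_{W(k)}$ is injective.

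The main obstacle lies in descending this integral injectivity modulo $\ell$: to conclude that $\Lambda = \Lambda_{W(k)} \otimes_{W(k)} k$ remains injective, one needs the cokernel of $\Lambda_{W(k)}$ to be $W(k)$-torsion-free. I would attack this by promoting the construction to a universal statement over $\cZ_{W(k)}$ itself: define a universal spherical Whittaker function $W^0_\cZ$ via Shintani's formula (after adjoining a square root of $q$ if necessary), yielding a $\cZ_{W(k)}$-linear map $W(k)[G/K] \to \Ind_U^G \tilde\psi_{\cZ_{W(k)}}$, and show via a Casselman--Shalika-type computation that the cokernel of this universal map is $\cZ_{W(k)}$-flat. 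Base-changing along any $\lambda: \cZ_{W(k)} \to R$ would then yield both the injectivity of $\Lambda_R$ and the $R$-flatness of $\cM_{\lambda, R}$ promised in the abstract. This universal flatness of the cokernel is the crux of the argument, and I expect it to rely on explicit formulas for integral spherical Whittaker functions in the style of Casselman--Shalika.
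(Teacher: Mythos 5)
Your reduction to an integral statement over $W(k)$ is fine as far as it goes: the lift $\tilde\lambda$ exists, $\cM_{\tilde\lambda,W(k)}$ is $W(k)$-flat and reduces to $\cM_{\lambda,k}$, and the characteristic-zero injectivity of $\Lambda_{\Frac W(k)}$ is indeed classical (the does-not-proceed member of the Weyl orbit has generic socle). But the argument stops exactly where you say it does. Injectivity of $\Lambda_{W(k)}$ gives nothing mod $\ell$ unless $\operatorname{Tor}_1^{W(k)}(\operatorname{coker}\Lambda_{W(k)},k)=0$, and your proposed route to this --- flatness over $\cZ$ of the cokernel of the universal map $W(k)[G/K]\to\Ind_U^G\psi_{\cZ}$ --- is an unproven assertion that is strictly stronger than the theorem itself (it implies injectivity of $\Lambda_R$ for every $R$ by base change). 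A Casselman--Shalika/Shintani computation only controls the values of the spherical Whittaker function, i.e.\ the image of the single vector $1_K$; it says nothing about the torsion of the cokernel, which is a huge non-admissible module whose structure involves all the non-spherical constituents of $\Ind_U^G\psi$. So the crux of your argument is missing, not merely technical.

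The paper takes a genuinely different route that sidesteps the cokernel entirely: an induction on $n$ applied directly to $\ker\Lambda_R$ over an arbitrary Noetherian $R$. One computes the Jacquet module of $W(k)[G/K]$ (it is $W(k)[M/K_M]$) and of the Whittaker space $\Ind_U^G\psi$ (it is $\Ind_{U_M}^M\psi$, via Bernstein--Zelevinsky geometric lemma plus second adjointness), and proves --- this is the hard technical step, Proposition~\ref{differbyunit} --- that $\Lambda_R$ is compatible with parabolic restriction up to a unit. The inductive hypothesis then kills $r_P^G(\ker\Lambda_R)$ for every proper $P$; the computation $(\cM_{\lambda,R})^{(n)}\cong R$ kills $(\ker\Lambda_R)^{(n)}$; and Lemma~\ref{cuspidal=generic} (an admissible module with vanishing Jacquet modules and vanishing top derivative is zero) finishes. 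If you want to salvage your approach, you would need an independent proof that the integral cokernel is torsion-free, and I see no way to obtain that without essentially redoing the inductive Jacquet-module analysis.
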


In fact, we prove a stronger result over $W(k)$: see Theorem~\ref{universalmodule:conj} below. 

When $n=1$, $k[G/K]$ can be identified with the universal unramified character $F^{\times}\to k[K\backslash G/K]^{\times}\cong k[X^{\pm 1}]^{\times}$, and the result is immediate. When $n=2$, Theorem~\ref{universalmodule:conj:k} is easily deduced from Serre's description (\cite{serre_letters}) and the fact that irreducible representations of $GL_2(F)$ are generic if and only if they are infinite-dimensional. Theorem~\ref{universalmodule:conj:k} was tentatively conjectured by Clozel, Harris, and Taylor (\cite[end of \S 5.1]{cht}).

Only one irreducible Jordan--Holder constituent of $\cM_{\lambda,k}$ is generic (this follows from Theorem~\ref{lazarus:thm} or Proposition~\ref{derivativefreerankone:cor} below). But Theorem~\ref{universalmodule:conj:k} tells us the unique generic constituent must occur as a submodule, and that it is the only irreducible submodule. Representations with this property are \emph{essentially absolutely irreducible generic} in the terminology of Emerton and Helm in \cite[\S3.2]{eh}, where they were studied in the context of formulating the local Langlands correspondence in families.

\subsection{Further questions}

These questions have been posed in more general settings. The assumption that $\ch(F)=0$ is a relic of the same assumption appearing in the reference \cite{h_bern}, which we cite to prove admissibility of the universal module over its endomorphism ring; it is almost certainly not needed there, but this would need to be checked. The assumption $\ell\neq 2$ is mostly for convenience in the introduction, as it is made in \cite{lazarus} to ensure $k$ contains a square root of $q$ for the Satake isomorphism. In the rest of this paper, all the arguments are over $W(k)$, and remain valid when $\ell=2$ by replacing $W(k)$ with $W(k)[\sqrt{q}]$. 

Beyond $GL_n$, Lazarus has conjectured a criterion for the flatness of $R[G/K]$ over $R[K\backslash G/K]$ for $G$ an unramified reductive $p$-adic group and $K$ hyperspecial, in terms of the Weyl action on unramified characters of a minimal Levi (\cite[Conjecture 1.0.5]{lazarus_these}). The criterion is established for banal $\ell$ (\cite{lazarus_these}), and flatness has been shown for arbitrary reductive groups of rank one (\cite[IV]{bellaiche_these}). One can also consider the compact induction $\cInd_K^G\rho$ for nontrival $\rho$. In \cite{gk_univ}, a flatness criterion was investigated for arbitrary reductive $p$-adic groups, nontrivial $\rho$, and $\ell=p$.  

In future work we will try to extend the methods of the present article to these more general settings. We remark that new insights are required to go beyond $GL_n$, or at least to groups $G(F)$ having non-generic cuspidal irreducible representations. This is because Lemma~\ref{cuspidal=generic}, which links Jacquet functors to Whittaker models in the induction arguments for Theorems~\ref{flatness} and \ref{universalmodule:conj}, would fail. For example, there are cuspidal irreducible representations of $Sp_4(F)$ that do not admit a Whittaker model \cite{howe_ps}. 

\subsection{Consequences in the mod-$\ell$ representation theory of $GL_n(F)$}

We can now characterize those unramified principal series with embeddings into $\Ind_U^G\psi$. For $v\in (i_B^G\chi)^K$, let $p_v:\cM_{\lambda,k}\to i_B^G\chi$ be the map $1_K\otimes 1\mapsto v$, then,
\begin{align*}
\text{$v$ is a generator for $i_B^G\chi$}&\iff \text{$p_v$ is surjective}\\
&\iff \text{$p_v$ is injective (since $\text{JH}(M_{\lambda,k})=\text{JH}(i_B^G\chi)$)}\\
&\iff \text{the socle of $i_B^G\chi$ is irreducible generic.}
\end{align*}
In other words, we have the following corollary.
\begin{cor}\label{sphericyclicessaig}
For any unramified character $\chi:T\to k^{\times}$, the following are equivalent:
\begin{enumerate}
\item $i_B^G\chi$ is cyclic, generated by a spherical vector,
\item the unique generic constituent of $i_B^G\chi$ occurs as a submodule, and $i_B^G\chi$ has no other irreducible submodules,
\item $i_B^G\chi$ is isomorphic to $\cM_{\lambda,k}$ where $\lambda:k[K\backslash G/K]\to k$ is the spherical Hecke character associated to $\chi$.
\end{enumerate}
\end{cor}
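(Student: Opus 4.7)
The strategy is to tie all three conditions to a single statement: the canonical morphism $p_{v_0}: \cM_{\lambda,k} \to i_B^G\chi$ furnished by the universal property is an isomorphism, where $v_0$ spans the one-dimensional space $(i_B^G\chi)^K$. By the Satake isomorphism, $v_0$ automatically lies in $(i_B^G\chi)^{K,\lambda}$, and for the same reason $p_{v_0}$ is (up to scalar) the only nonzero element of $\Hom_{k[G]}(\cM_{\lambda,k}, i_B^G\chi)$. I would then prove the chain $(3) \Rightarrow (1)$, $(1) \Leftrightarrow (3)$, $(3) \Leftrightarrow (2)$.

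For $(1) \Leftrightarrow (3)$: the image of $p_{v_0}$ is precisely the $k[G]$-submodule generated by $v_0$, so (1) is equivalent to $p_{v_0}$ being surjective. Theorem~\ref{lazarus:thm} says both sides have the same finite Jordan--H\"older multiset, so a surjection between them is automatically an isomorphism, giving $(1) \Leftrightarrow (3)$.

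For $(3) \Rightarrow (2)$, I would apply Theorem~\ref{universalmodule:conj:k} to embed $\cM_{\lambda,k}$ into $\Ind_U^G\psi$: every nonzero submodule is then generic, and combined with the multiplicity-one appearance of the unique generic constituent $\sigma$ (noted in the paragraph preceding the corollary), this forces $\soc(\cM_{\lambda,k}) = \sigma$. For $(2) \Rightarrow (3)$, I would argue by contradiction. Suppose $\ker(p_{v_0}) \neq 0$. The same socle argument applied to $\cM_{\lambda,k}$ gives $\soc(\cM_{\lambda,k}) = \sigma$, so $\sigma \subseteq \ker(p_{v_0})$; this removes the unique copy of $\sigma$ from the composition factors of $\Img(p_{v_0})$. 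But $\Img(p_{v_0})$ is a nonzero submodule of $i_B^G\chi$, so its socle is nonzero and sits inside $\soc(i_B^G\chi) = \sigma$, contradiction. Hence $p_{v_0}$ is injective, and the length count from $(1) \Leftrightarrow (3)$ upgrades it to an isomorphism.

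There is no serious obstacle beyond the two deep inputs, Theorems~\ref{lazarus:thm} and \ref{universalmodule:conj:k}; the corollary is essentially organized bookkeeping. The one slightly delicate move is the socle contradiction in $(2) \Rightarrow (3)$, which leans on the multiplicity-one feature of the generic constituent. If this feature were not already available, I would record it at the outset as a direct consequence of Theorem~\ref{lazarus:thm} combined with the standard fact that an unramified principal series has at most one generic Jordan--H\"older constituent.
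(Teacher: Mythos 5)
Your proposal is correct and follows essentially the same route as the paper, which proves the corollary via the displayed chain of equivalences immediately preceding it: $v$ generates $i_B^G\chi$ iff $p_v$ is surjective, iff $p_v$ is injective (by the equality of Jordan--H\"older multisets from Theorem~\ref{lazarus:thm}), iff the socle of $i_B^G\chi$ is irreducible generic (via the embedding of Theorem~\ref{universalmodule:conj:k} and the multiplicity-one generic constituent). Your socle-contradiction argument for $(2)\Rightarrow(3)$ just makes explicit a step the paper leaves terse.
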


When the characteristic of $k$ is zero, every Weyl orbit of unramified characters contains a $\chi$ such that $i_B^G\chi$ is isomorphic to $\cM_{\lambda,k}$ (it is the $\chi$ satisfying the ``does-not-proceed'' condition on segments \cite[4.3.2]{eh}). However, when $k$ has positive characteristic, and $i_B^G\chi$ is reducible, there may not exist a character $\chi$ in the Weyl orbit such that $i_B^G(\chi)$ is isomorphic to $\cM_{\lambda,k}$. For example, in the ``limit'' case $\ell>n$ and $q\equiv 1$ mod $\ell$, Vign\'{e}ras has shown that $i_B^G\chi$ is semisimple (\cite[Appendix B, Thm 1 (7)]{cht}), hence will not exhibit the structure of $\cM_{\lambda,k}$ for any $\chi$, if reducible.

Since no proper quotient of $\cM_{\lambda,k}$ is generic, we deduce another striking corollary.
\begin{cor}\label{propertiesneeded}
Suppose $\pi$ is a smooth $k[G]$-module such that
\begin{enumerate}
\item $\pi$ has a generator $v$ in $\pi^{K,\lambda}$ for some homomorphism $\lambda:k[K\backslash G/K]\to k$, and 
\item $\pi$ is generic.
\end{enumerate}
Then the canonical surjection $1_K\otimes 1\mapsto v: \cM_{\lambda,k} \to \pi$ is an isomorphism (in particular, $\pi$ has finite length).
\end{cor}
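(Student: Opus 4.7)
The plan is to use the universal property of $\cM_{\lambda,k}$ to produce a surjection $p_v:\cM_{\lambda,k}\twoheadrightarrow \pi$ sending $1_K\otimes 1\mapsto v$, and then argue that this surjection is forced to be injective by composing with a nonzero Whittaker functional on $\pi$ and applying Theorem~\ref{universalmodule:conj:k}. Surjectivity of $p_v$ is immediate from the hypothesis that $v$ generates $\pi$, so the whole game is injectivity.

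The key structural observation is that $\Hom_{k[G]}(\cM_{\lambda,k},\Ind_U^G\psi)$ is one-dimensional over $k$. This is because the universal property gives an isomorphism
$$\Hom_{k[G]}(\cM_{\lambda,k},\Ind_U^G\psi)\cong (\Ind_U^G\psi)^{K,\lambda},$$
and the right-hand side is identified with $k$ by the Shintani formula, as recalled in the paragraph preceding Theorem~\ref{universalmodule:conj:k}; a basis of this Hom space is given by the canonical map $\Lambda$, which is injective by that theorem.

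Now, since $\pi$ is generic by hypothesis, fix a nonzero $W\in \Hom_{k[G]}(\pi,\Ind_U^G\psi)$. The composition $W\circ p_v$ lies in the one-dimensional space above, so $W\circ p_v=c\Lambda$ for some $c\in k$. If $c=0$, then $W\circ p_v=0$ together with the surjectivity of $p_v$ forces $W=0$, contradicting the choice of $W$. Hence $c\neq 0$, and $W\circ p_v=c\Lambda$ is injective by Theorem~\ref{universalmodule:conj:k}. Consequently $p_v$ is injective, hence an isomorphism. The parenthetical claim that $\pi$ has finite length follows immediately from Theorem~\ref{lazarus:thm}, which identifies $\text{JH}(\cM_{\lambda,k})$ with the finite multiset $\text{JH}(i_B^G\chi)$.

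I do not anticipate any real obstacle: the corollary is essentially Theorem~\ref{universalmodule:conj:k} packaged together with the one-dimensionality of the spherical Whittaker line. The only point requiring care is precisely that one-dimensionality, which ensures $W\circ p_v$ cannot vanish unless $W$ does.
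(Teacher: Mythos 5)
Your proof is correct and follows essentially the same route as the paper: the paper's one-line justification ("no proper quotient of $\cM_{\lambda,k}$ is generic") rests on exactly the two facts you isolate, namely the injectivity of $\Lambda$ from Theorem~\ref{universalmodule:conj:k} and the one-dimensionality of the spherical Whittaker line $(\Ind_U^G\psi)^{K,\lambda}\cong\Hom_{k[G]}(\cM_{\lambda,k},\Ind_U^G\psi)$ from Corollary~\ref{shintanispacefreerankone}. Your packaging via the composite $W\circ p_v=c\Lambda$ with $c\neq 0$ is a clean and complete way to carry this out, and the finite-length remark via Theorem~\ref{lazarus:thm} is fine.
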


\subsection{Application to Ihara's lemma}

In the global setting of mod-$\ell$ automorphic forms of \cite{cht}, Clozel, Harris, and Taylor formulate a conjecture known as ``Ihara's lemma'' (\cite[Conjecture I]{cht}). When $n=2$ it is deduced easily from strong approximation, but is open for $n>2$. Assuming the truth of Ihara's lemma, the authors give a proof of a non-minimal $R=\TT$ theorem. The weaker statement $R^{\text{red}}=\TT$, where $R^{\text{red}}$ is the reduced quotient of $R$, was later obtained unconditionally using Taylor's ``Ihara avoidance'' method (\cite{ihara_avoidance}), and was enough for applications to the Sato--Tate conjecture. However, the full $R=\TT$ theorem would have applications to special values of the adjoint $L$-function, would imply that $R$ is a complete intersection, and it would be useful for generalizing the local-global compatibility results of \cite{em_lg}. Ihara's lemma remains a conspicuous missing piece in our understanding of congruences among algebraic automorphic forms of different levels. 

In Section~\ref{applicationtoihara:section} we apply Corollary~\ref{propertiesneeded} to reduce Ihara's lemma to an easier statement. For the sake of this introduction, we give an informal summary of the punchline, postponing the detailed discussion until Section~\ref{applicationtoihara:section}.
 
In this subsection, let $F_{w_0}$ be the completion at a place $w_0$ of the CM field $F$ appearing in the setting of \cite{cht} (or Section~\ref{globalsetup:section} of this paper). Given a mod-$\ell$ automorphic form $f$ (as in \cite[3.4]{cht}), having level $K=GL_n(\cO_{F,w_0})$ at the place $w_0$, one can form the cyclic $k[GL_n(F_{w_0})]$-submodule $$\langle GL_n(F_{w_0})\cdot f\rangle$$ inside the space of mod-$\ell$ automorphic forms having arbitrary level at $w_0$. If $f$ is an eigenform for a ``non-Eisenstein'' maximal ideal $\fm$ of a certain global Hecke algebra away from $w_0$, the Ihara conjecture predicts that \emph{all irreducible submodules of $\langle GL_n(F_{w_0})\cdot f\rangle$ are generic} (see Conjecture~\ref{weakihara:conj} below for the precise statement).

Corollary~\ref{propertiesneeded} gives two reformulations of the Ihara conjecture.
\begin{cor}The following are equivalent:
\begin{enumerate}
\item $\langle GL_n(F_{w_0})\cdot f\rangle$ has a unique irreducible submodule, which is generic, and has no other generic constituents (i.e. it is ``essentially absolutely irreducible generic'' in the sense of \cite{eh}),
\item all irreducible submodules of $\langle GL_n(F_{w_0})\cdot f\rangle$ are generic (i.e. the Ihara conjecture is true),
\item $\langle GL_n(F_{w_0})\cdot f\rangle$ is generic.
\end{enumerate}
\end{cor}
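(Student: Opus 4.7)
The plan is to pivot all three equivalences on Corollary~\ref{propertiesneeded}, which identifies the universal spherically-generated generic $k[G]$-module with $\cM_{\lambda,k}$, whose internal structure is already controlled by Theorems~\ref{lazarus:thm} and~\ref{universalmodule:conj:k}. First I would fix notation: write $\pi := \langle GL_n(F_{w_0})\cdot f\rangle$ and observe that because $f$ has level $K = GL_n(\cO_{F,w_0})$ and is an eigenform for the spherical Hecke algebra at $w_0$, it lies in $\pi^{K,\lambda}$ for its Hecke character $\lambda$ and generates $\pi$. The universal property thus realises $\pi$ as a quotient of $\cM_{\lambda,k}$, and Theorem~\ref{lazarus:thm} tells us $\cM_{\lambda,k}$ has finite length with the same Jordan--Holder content as $i_B^G\chi$. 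Let $\tau$ denote the unique generic Jordan--Holder constituent of $i_B^G\chi$: it has multiplicity exactly one in $\cM_{\lambda,k}$, hence multiplicity at most one in $\pi$.

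Next I would dispatch the easy implications. For $(3) \Rightarrow (1)$, Corollary~\ref{propertiesneeded} applies directly to give $\pi \cong \cM_{\lambda,k}$, and Theorem~\ref{universalmodule:conj:k} embeds $\cM_{\lambda,k}$ into $\Ind_U^G\psi$; since every nonzero submodule of $\Ind_U^G\psi$ is generic, the socle of $\cM_{\lambda,k}$ is a semisimple generic module, which multiplicity one forces to equal $\tau$. The implication $(1) \Rightarrow (2)$ is tautological. For $(2) \Rightarrow (1)$, when $\pi \neq 0$ the finite-length assumption yields a nonzero semisimple socle whose components, by $(2)$, are all generic irreducibles, hence all isomorphic to $\tau$; multiplicity one then collapses the socle to a single copy of $\tau$, and the uniqueness of the generic constituent handles the ``no other generic constituents'' clause.

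The interesting step is $(1) \Rightarrow (3)$. I would argue via the twisted Jacquet (Gelfand--Graev) functor $V \mapsto V_{U,\psi}$: it is exact on finite-length smooth $k[G]$-modules, it sends every generic irreducible to a one-dimensional space (uniqueness of the Whittaker model) and every non-generic irreducible to zero. Under $(1)$, the constituent $\tau$ embeds as a submodule of $\pi$, so $\dim \pi_{U,\psi}$ equals the Jordan--Holder multiplicity of $\tau$ in $\pi$, which is at least one. Dualising this nonzero coinvariant space produces a nonzero Whittaker functional on $\pi$, i.e., $(3)$.

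The main obstacle is marshalling the mod-$\ell$ numerical input on the twisted Jacquet functor for $GL_n(F_{w_0})$: exactness on finite-length modules and one-dimensionality on generic irreducibles. These are standard in the $\ell$-modular theory of $GL_n$, built on Bernstein--Zelevinsky-type derivatives as adapted by Vign\'eras, but they are the non-formal backbone of the step $(1) \Rightarrow (3)$; with them in hand, everything else is a formal consequence of the multiplicity-one structure of the generic Jordan--Holder constituent of $\cM_{\lambda,k}$.
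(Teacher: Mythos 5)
Your proposal is correct and follows the paper's route: the substantive implication $(3)\Rightarrow(1)$ is handled exactly as in the paper, by combining the fact that $f$ is a spherical Hecke eigenvector at $w_0$ (Theorem~\ref{localprops}) with Corollary~\ref{propertiesneeded} to get $\pi\cong\cM_{\lambda,k}$, and then invoking Theorem~\ref{universalmodule:conj:k} and multiplicity one of the generic constituent. The remaining implications, which the paper dismisses as immediate, you spell out correctly via exactness of the twisted coinvariants functor (with a harmless redundancy: once $(1)\Rightarrow(2)$ and your $(2)\Rightarrow(3)$-style socle argument are in place, the separate $(2)\Rightarrow(1)$ step is not needed to close the cycle).
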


The implications $(1)\implies (2)\implies (3)$ are immediate; the main point is $(3)\implies (1)$. If $f$ is an eigenform for a non-Eisenstein maximal ideal $\fm$ of the Hecke algebra at split places \emph{away from} $w_0$, it turns out (by looking at the lift of the associated Galois representation) that $f$ must also be an eigenvector for the action of the spherical Hecke algebra \emph{at} $w_0$ (this is shown in \cite{cht}-- see Theorem~\ref{localprops} below). In particular, there is a homomorphism $$\cZ_{v_0}:=k[GL_n(\cO_{F,w_0}) \backslash GL_n(F_{w_0}) / GL_n(\cO_{F,w_0})]\xrightarrow{\lambda} k,$$ depending on $\fm$, such that $z*f = \lambda(z)f$ for $z\in \cZ_{v_0}$. Therefore, the representation $\langle GL_n(F_{w_0})\cdot f\rangle$ satisfies conditions (1) and (2) of Corollary~\ref{propertiesneeded}, and (1) follows.

For the application of Ihara's lemma to the $R=\TT$ theorem in \cite{cht} it suffices to know the truth of Ihara's lemma in the quasi-banal setting: $q\equiv 1$ mod $\ell$ and $\ell>n$, or $\ell$ banal (c.f. \cite[Prop 5.3.5]{cht}). In the quasi-banal setting we give a sufficient condition for the genericity of $\langle GL_n(F_{w_0})\cdot f\rangle$ in terms of the dimension of the span of the images of $f$ under certain Iwahori--Hecke operators at $w_0$ (c.f. Corollary~\ref{quasibanaliwahorirestatement}).

In the literature, there have recently been some results on modified versions of Ihara's lemma beyond $GL_2$. For a very specific set of Satake parameters, Thorne proved it in the banal case using torsion vanishing results on the cohomology of Shimura varieties (\cite{thorne_gln}). A reformulation was given in the banal case by Sorensen (\cite{sorensen_ihara}). Boyer proved it under stronger hypotheses than irreducibility of the associated modular Galois representation (\cite{boyer_ihara}), and it was recently generalized to Shimura curves, under stronger hypotheses, by Shotton and Manning (\cite{man_shot}).

\section*{Acknowledgements}
The author is grateful for many helpful conversations with Ramla Abdellatif, Jean-Fran\c{c}ois Dat, David Helm, Alberto M\`{i}nguez, Stefan Patrikis, Gordan Savin, Vincent S\'{e}cherre, and Claus Sorensen. The author is grateful to David Helm for suggesting the induction strategy that ultimately led to the proof of Theorem~\ref{universalmodule:conj}. Many thanks to Guy Henniart for pointing out a mistake in an earlier version.

\section{Whittaker functions of spherical Hecke eigenvectors}\label{sphericalwhittaker:section}

Let $\cZ:=W(k)[K\backslash G/K]$ be the spherical Hecke algebra over $W(k)$, let $R$ be a ring, and let $\lambda:\cZ\to R$ be a homomorphism. Define $\Ind_U^G\psi_R$ to be the set of locally constant functions $W:G\to R$ satisfying $W(ug)=\psi(u)W(g)$, $u\in U$, $g\in G$.

Let $\varpi$ be a uniformizer of $F$ and let $T^{(j)}$ denote the element of $\cZ$ given by the $K$-double coset operator $$[\diag(\underbrace{\varpi,\dots,\varpi}_{j\text{ times}},1,\dots,1)].$$

Given any $n$-tuple $\mu\in \ZZ^n$, we let $\varpi^{\mu}$ denote the matrix $\diag(\varpi^{\mu_1},\dots,\varpi^{\mu_n})$. If $W:G\to R$ is an element of $(\Ind_U^G\psi_R)^K$, it follows from the Iwasawa decomposition that $W$ is entirely determined by its values on the set $\{\varpi^{\mu}:\mu\in \ZZ^n\}$.

Given a partition $\mu$ of length $n$, we define the Schur polynomial $$S_{\mu}(X_1,\dots,X_n):= \frac{|(X_j^{\mu_i+n-i})_{i,j}|}{\prod_{i<j}(X_i-X_j)}.$$ It is a symmetric function in the variables $X_1,\dots, X_n$. If we let $T_1,\dots, T_n$ denote the elementary symmetric functions in the variables $X_1,\dots, X_n$, then $T_1,\dots, T_n$ generate the ring of symmetric functions, and thus we may write $S_{\mu}$ as a polynomial in $T_1,\dots, T_n$ (this dictionary is given explicitly by the Jacobi--Trudi identities in combinatorics). We will let $S_{\mu}(T_1,\dots, T_n)$ denote the Schur polynomial $S_{\mu}$ expressed as a polynomial in the $T_i$'s.

The Satake isomorphism (\cite[Prop 1.6]{lazarus}) gives an isomorphism of rings:
$$\cZ:=W(k)[K\backslash G/K] \cong W(k)[T/T(\cO_F)]^{W_G}\cong W(k)[X_1^{\pm 1},\dots, X_n^{\pm 1}]^{S_n},$$ where $q^{i(i-1)/2}T^{(i)}$ is sent to the $i$'th elementary symmetric function in the $X_i$'s.

The following proposition is a generalization of the main result of \cite{shintani}, and the proof is nearly identical.

\begin{prop}\label{Shintani}
Let $\lambda:\cZ\to R$ be a homomorphism, and let $W$ be an element of $(\Ind_U^G\psi_R)^K$. Suppose that for each $T^{(j)}\in \cZ$,
$$T^{(j)}*W = \lambda(T^{(j)})W.$$ Then  
$$W(\varpi^{\mu}) = q^{\sum_{i=1}^n(i-n)\mu_i}S_{\mu}\big(\lambda(T^{(1)}),\dots,q^{i(i-1)/2}\lambda(T^{(2)}),\dots,q^{n(n-1)/2}\lambda(T^{(n)})\big)\cdot W(1).$$
\end{prop}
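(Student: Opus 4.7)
The plan is to follow Shintani's original argument, observing that every step is $R$-linear and so carries over verbatim from the complex case. The first step is to show that $W(\varpi^\mu) = 0$ whenever $\mu$ is not dominant. Indeed, if $\mu_i < \mu_{i+1}$ for some $i$, one can choose $x \in F$ with $v(x) = -1$ (so that $\psi(x) \neq 1$ by the conductor hypothesis on $\psi$) and set $u = I + xE_{i,i+1} \in U$. The conjugate $\varpi^{-\mu} u \varpi^\mu$ has superdiagonal entry $x\varpi^{\mu_{i+1}-\mu_i}$, which lies in $\cO_F$ since $v(x) + \mu_{i+1} - \mu_i \geq 0$, hence $\varpi^{-\mu} u \varpi^\mu \in K$. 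Then $\psi(x)W(\varpi^\mu) = W(u\varpi^\mu) = W(\varpi^\mu \cdot \varpi^{-\mu}u\varpi^\mu) = W(\varpi^\mu)$, forcing $W(\varpi^\mu) = 0$. A parallel check shows that the Schur polynomial $S_\mu$ also vanishes on the non-dominant locus, so one reduces to the case where $\mu$ is a partition.

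Next, for dominant $\mu$, I would expand the eigenvalue equation $T^{(j)} * W(\varpi^\mu) = \lambda(T^{(j)})W(\varpi^\mu)$ by decomposing the double coset $K\varpi^{\epsilon_j}K$, with $\epsilon_j = (1,\ldots,1,0,\ldots,0)$ having $j$ ones, into a disjoint union of left $K$-cosets $g_\alpha K$. Using the Iwasawa decomposition, each $g_\alpha$ may be taken in the upper-triangular Borel, so $g_\alpha = u_\alpha \varpi^{\nu_\alpha}$. Pairing with $W$ and applying the vanishing from Step~1 to $W(u_\alpha \varpi^{\mu+\nu_\alpha})$, the only surviving $\alpha$ are those for which $\mu + \nu_\alpha$ is dominant and $u_\alpha$ is absorbed by the Whittaker character; these correspond bijectively to the vertical strips of length $j$ that can be added to $\mu$, and each contributes an explicit power of $q$ coming from the cardinality of the associated orbit in $K/K \cap \varpi^{\nu_\alpha}K\varpi^{-\nu_\alpha}$. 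After substituting $\widetilde W(\mu) := q^{-\sum_i (i-n)\mu_i}\,W(\varpi^\mu)/W(1)$, and using that $q^{j(j-1)/2}T^{(j)}$ corresponds to the $j$-th elementary symmetric function $e_j$ under Satake, the resulting identity is exactly the Pieri recursion
$$e_j \cdot S_\nu \;=\; \sum_{\mu} S_{\mu},$$
the sum running over partitions $\mu$ obtained from $\nu$ by adding a vertical strip of length $j$.

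Since the elementary symmetric functions $e_1, \ldots, e_n$ generate the ring of symmetric polynomials, the Pieri rule determines every $S_\mu$ uniquely from the base case $S_{(0,\ldots,0)} = 1$. Induction on $|\mu| = \sum_i \mu_i$, with base case $W(\varpi^{0}) = W(1)$, then yields the formula. The main obstacle is the combinatorial bookkeeping in the middle step: producing the coset decomposition explicitly and tracking the powers of $q$ so that they cancel precisely against the factor $q^{\sum_i(i-n)\mu_i}$ and the Satake normalization $q^{i(i-1)/2}$. This is exactly what is carried out in \cite{shintani} over $\mathbb{C}$; because no inverse of an element of $R$ is ever taken (the coset-counting coefficients are all integers in the ground ring $W(k) \subseteq R$), the argument transports verbatim to the general $R$-algebra setting.
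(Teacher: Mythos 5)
Your proposal is correct and follows essentially the same route as the paper: vanishing of $W(\varpi^{\mu})$ on non-dominant $\mu$ via the conductor-zero argument, the decomposition of $K\varpi^{1^j}K$ into single cosets yielding the Pieri-type recursion $q^{j(j-1)/2}\lambda(T^{(j)})\widetilde{W}(\mu)=\sum_{\epsilon}\widetilde{W}(\mu+\epsilon)$, and uniqueness of the solution by induction, with the observation that all coset-counting coefficients are integral powers of $q$ so that Shintani's argument transports verbatim to any $W(k)$-algebra $R$. (One small caution: for non-dominant $\mu$ the determinantal formula for $S_{\mu}$ need not literally vanish — it can equal $\pm S_{\mu'}$ for the sorted partition — so the correct statement is that one \emph{extends} $\mu\mapsto S_{\mu}$ by zero to non-dominant weights, and it is that extension which satisfies the recursion.)
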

\begin{proof}
We will abbreviate $W(\mu):=W(\varpi^{\mu})$. Set $\widetilde{W}(\mu) = q^{\sum_{i=1}^n(n-i)\mu_i}W(\mu)$. Let $$I(j):=\{\epsilon\in \ZZ^n: \epsilon_i\in \{0,1\}\text{ and } \sum_i\epsilon_i = j\}.$$ As a function $\ZZ^n\to R$, we claim that $\widetilde{W}$ satisfies the following conditions:
\begin{enumerate}
\item $\widetilde{W}\big((0,\dots, 0)\big) = W\big((0,\dots,0)\big)$
\item $\widetilde{W}(\mu) = 0$ if $\mu$ is non-dominant,
\item $q^{j(j-1)/2}\lambda(T^{(j)})\widetilde{W}(\mu) = \sum_{\epsilon\in I(j)}\widetilde{W}(\mu+\epsilon)$ if  $\mu$ is dominant, for $1\leq j\leq n$.
\end{enumerate}
The first condition is obvious, and the second follows from the conductor of $\psi$ being $0$. 

For the third condition, set $N_0:=N\cap K$ and $N_{0,\epsilon}:=N_0\cap \varpi^{\epsilon}K\varpi^{-\epsilon}$. Then by \cite[Sublemma]{shintani}, we have the following decomposition into single cosets:
$$K\varpi^{1^j}K = \bigcup_{\epsilon\in I(j)}\bigcup_{x\in N_0/N_{0,\epsilon}}x\varpi^{\epsilon}K.$$ Since $T^{(j)}*W = \lambda(T^{(j)})W$, the third condition follows after computing the order of $N_0/N_{0,\epsilon}$ (cf. \cite[p.181]{shintani}).

As in \cite[p.182]{shintani} (or by an easy induction argument), a function $\widetilde{W}:\mathbb{Z}^n\to R$ satisfying conditions (1), (2), and (3) is uniquely determined. Since the function $$\mu\mapsto S_{\mu}(\lambda(T^{(1)}),\dots,q^{i(i-1)/2}\lambda(T^{(2)}),\dots,q^{n(n-1)/2}\lambda(T^{(n)}))\cdot W((0,\dots,0))$$ also satisfies (1), (2), and (3), by the elementary properties of Schur polynomials, we have proved the result. 
\end{proof}

\begin{cor}\label{shintanispacefreerankone}
Let $\lambda:\cZ\to R$ be a homomorphism. The map $\text{ev}_1:W\mapsto W(1)$ defines an isomorphism $(\Ind_U^G\psi_R)^{K,\lambda}\isomto R$.
\end{cor}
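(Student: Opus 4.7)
The plan is to derive the corollary directly from Proposition \ref{Shintani} together with the Iwasawa decomposition. For injectivity, if $W \in (\Ind_U^G\psi_R)^{K,\lambda}$ has $W(1) = 0$, Proposition \ref{Shintani} forces $W(\varpi^\mu) = 0$ for every $\mu \in \ZZ^n$. Since $T(\cO_F) \subset K$, the Iwasawa decomposition writes every $g \in G$ as $u\varpi^\mu k$ with $u \in U$ and $k \in K$; combined with left-$(U,\psi)$-equivariance and right-$K$-invariance, this forces $W \equiv 0$.

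For surjectivity, given $c \in R$, I construct a spherical $\lambda$-eigen Whittaker function $W$ with $W(1) = c$ directly. The Shintani formula dictates the recipe: for dominant $\mu$ I set
\[
W(u\varpi^\mu k) := \psi(u)\cdot q^{\sum_i(i-n)\mu_i}\cdot S_\mu\bigl(\lambda(T^{(1)}),\, q\lambda(T^{(2)}),\, \ldots,\, q^{n(n-1)/2}\lambda(T^{(n)})\bigr)\cdot c,
\]
and for non-dominant $\mu$ I set $W := 0$. Well-definedness requires checking the ambiguity in the Iwasawa decomposition: $\mu$ is unique, and for dominant $\mu$ the residual ambiguity in $u$ lies in $U\cap \varpi^\mu K\varpi^{-\mu}\subseteq U\cap K = U(\cO_F)$, on which $\psi$ is trivial since it has conductor zero; for non-dominant $\mu$ the formula returns $0$ regardless, so the prescription is unambiguous.

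It remains to check the defining properties. Left-$(U,\psi)$-equivariance and right-$K$-invariance are built into the definition, and $W(1) = c$ follows from $S_{(0,\ldots,0)} = 1$. The Hecke eigenvalue identity $T^{(j)}*W = \lambda(T^{(j)})W$ reduces, via the sublemma of \cite{shintani} and evaluation at the elements $\varpi^\mu$, to the recursion $q^{j(j-1)/2}\lambda(T^{(j)})\widetilde{W}(\mu) = \sum_{\epsilon\in I(j)}\widetilde{W}(\mu+\epsilon)$ used in the proof of Proposition \ref{Shintani}. When $\widetilde{W}(\mu)$ is specialized to the Schur-polynomial value above, this identity is exactly Pieri's rule for multiplication of a Schur polynomial by an elementary symmetric function (with the convention that $S_\nu = 0$ whenever $\nu$ is non-dominant), which is classical.

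The main obstacle is essentially bookkeeping: well-definedness of the Iwasawa-based construction and the Pieri-rule verification. Conceptually, however, there is little to do, because Proposition \ref{Shintani} already pins down the only possible candidate for $W$; existence is just the observation that this candidate is consistent as a function on $G$ and satisfies the eigenvalue recursion.
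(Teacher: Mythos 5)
Your proof is correct and follows essentially the same route as the paper: injectivity from Proposition~\ref{Shintani} plus the Iwasawa decomposition, and surjectivity by writing down the Schur-polynomial Whittaker function. The paper simply asserts that this formula gives a preimage, whereas you supply the details it leaves implicit (well-definedness via $U\cap\varpi^{\mu}K\varpi^{-\mu}\subseteq U\cap K$ for dominant $\mu$, and the eigenvalue recursion as Pieri's rule); these checks are exactly the "elementary properties of Schur polynomials" invoked in the proof of Proposition~\ref{Shintani}.
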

\begin{proof}
The injectivity is Proposition~\ref{Shintani}. The surjectivity is simply observing that, for any $r$ in $R$, the Whittaker function defined by the equation $$W^0_{\lambda,e}(\varpi^{\mu}) := q^{\sum_{i=1}^n(i-n)\mu_i}S_{\mu}(\lambda(T^{(1)}),q\lambda(T^{(2)}),\dots,q^{n(n-1)/2}\lambda(T^{(n)}))\cdot r$$ is a preimage of $r$ in the map $W\mapsto W(1)$.
\end{proof}

Given a $\cZ$-module structure $\lambda:\cZ\to R$, let $W_{\lambda}^0$ denote the preimage of $1\in R$ in the map $$\text{ev}_1:(\Ind_U^G\psi_R)^{K,\lambda}\to R.$$ More explicitly,
$$W_{\lambda}^0(\varpi^{\mu}) := q^{\sum_{i=1}^n(i-n)\mu_i}S_{\mu}(\lambda(T^{(1)}),q\lambda(T^{(2)}),\dots,q^{n(n-1)/2}\lambda(T^{(n)})).$$

As part of an induction argument below, we will require a version of Corollary~\ref{shintanispacefreerankone} for Levi subgroups. Let $P=MN$ be a proper standard parabolic subgroup of $G$ with Levi $M$ and unipotent radical $N$. Let $K_M:=K\cap M$, let $U_M := U\cap M$ and denote by $\cZ_{M}$ the ring $W(k)[K_M \backslash M /K_M]$. If $V$ is a smooth $W(k)[M]$-module, $\cZ_M$ acts via double-coset operators on the $K_M$-invariants $V^{K_M}$. Given $z\in \cZ_M$, we denote this action by $z*v$, for $v\in V^{K_M}$. There is a natural inclusion $\iota:\cZ\to \cZ_M$, which can be realized via Satake as the inclusion $W(k)[X_1^{\pm 1},\dots,X_n^{\pm 1}]^{S_n} \hookrightarrow W(k)[X_1^{\pm 1},\dots,X_n^{\pm 1}]^{W_M}$, where $W_M$ is the subgroup of the Weyl group $W_G\cong S_n$ corresponding to the Levi $M$. 

For a homomorphism $\tilde{\lambda}:\cZ_M\to R'$, we will consider the space 
$$(\Ind_{U_M}^M\psi_{R'})^{K_M,\tilde{\lambda}}:=\{W\in (\Ind_{U_M}^M\psi_{R'})^{K_M}:z*W = \tilde{\lambda}(z)W\text{ for all }z\in \cZ_M\}.$$

\begin{lemma}\label{shintaniforlevi}
The map $W\mapsto W(1)$ defines an isomorphism 
\begin{equation}\label{shintaniforlevi:eqn}
(\Ind_{U_M}^M\psi_{R'})^{K_M,\tilde{\lambda}} \isomto R'.
\end{equation}
\end{lemma}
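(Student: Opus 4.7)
\emph{Plan.} Since $P$ is a standard parabolic of $GL_n$, its Levi factors as $M = \prod_{i=1}^r M_i$ with $M_i = GL_{n_i}(F)$ for some composition $n = n_1 + \cdots + n_r$. Consequently $K_M = \prod_i K_i$ with $K_i := GL_{n_i}(\cO_F)$, $U_M = \prod_i U_{n_i}$, and the restriction of $\psi$ to each $U_{n_i}$ is still nondegenerate because the conductor of $\psi$ on $F$ is zero. Under the Satake isomorphism for $M$, the Hecke algebra $\cZ_M$ splits as $\bigotimes_i \cZ_i$, where $\cZ_i := W(k)[K_i \backslash M_i / K_i]$, and $\tilde{\lambda}$ therefore restricts to $r$ characters $\tilde{\lambda}_i : \cZ_i \to R'$ whose joint data determines it. I plan to deduce the lemma by applying Corollary~\ref{shintanispacefreerankone} to each factor $M_i$ in turn.

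\emph{Injectivity.} Fix $W \in (\Ind_{U_M}^M \psi_{R'})^{K_M, \tilde{\lambda}}$. For any fixed $(m_1, \ldots, m_{r-1})$, the map $g \mapsto W(m_1, \ldots, m_{r-1}, g)$ lies in $(\Ind_{U_{n_r}}^{M_r} \psi_{R'})^{K_r, \tilde{\lambda}_r}$, because the $\cZ_r$-action on $W$ (under the embedding $\cZ_r \hookrightarrow \cZ_M$) only affects the last factor of $M$. Corollary~\ref{shintanispacefreerankone} applied to $M_r$ then identifies this function with $W(m_1, \ldots, m_{r-1}, 1) \cdot W^0_{\tilde{\lambda}_r}(g)$. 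Iterating this observation from $i = r$ down to $i = 1$ yields
$$W(m_1, \ldots, m_r) = W(1) \prod_{i=1}^r W^0_{\tilde{\lambda}_i}(m_i),$$
so $W$ is determined by its value at $1$, and (\ref{shintaniforlevi:eqn}) is injective.

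\emph{Surjectivity and main obstacle.} For $s \in R'$, define $W_s : M \to R'$ by $W_s(m_1, \ldots, m_r) := s \cdot \prod_i W^0_{\tilde{\lambda}_i}(m_i)$. It is $K_M$-invariant, transforms under $U_M$ by $\psi$, and its $\cZ_M$-eigenvalue is $\tilde{\lambda}$ since each block generator of $\cZ_M$ acts on exactly one factor. Its image under $\text{ev}_1$ is $s$, which gives surjectivity. I do not expect any serious obstacle here; the only bookkeeping is checking that the direct-product decompositions of $M$, $K_M$, $U_M$, $\psi$, and $\cZ_M$ are mutually compatible, which is automatic for a standard Levi of $GL_n$.
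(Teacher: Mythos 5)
Your proof is correct and follows essentially the same route as the paper's: decompose $M$ into its $GL_{n_i}$ blocks, identify $\cZ_M$ with the tensor product of the block spherical Hecke algebras, and apply Corollary~\ref{shintanispacefreerankone} factor by factor; your write-up just makes the iteration and the product formula $W(m_1,\dots,m_r)=W(1)\prod_i W^0_{\tilde{\lambda}_i}(m_i)$ explicit where the paper leaves it to the reader.
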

\begin{proof}
By the Iwasawa decomposition applied to $M$, any element $W\in (\Ind_{U_M}^M\psi_{R'})^{K_M}$ is determined by its values on weights which are dominant within each Levi component. If $M = GL_{n_1}\times\cdots \times GL_{n_r}$ we can identify $\cZ_M \cong \cZ_{n_1}\otimes\cdots \otimes\cZ_{n_r}$, where $\cZ_{n_i}$ is the spherical Hecke algebra for $GL_{n_i}(F)$. The result then follows from the same argument as in Proposition~\ref{Shintani} and Corollary~\ref{shintanispacefreerankone}, applied to each Levi factor.
\end{proof}

\section{Properties of the universal module}
We will consider the universal unramified module in a more general setting than the introduction. Let $W(k)$ denote the Witt vectors of $k$. Given a commutative ring $R$ and a homomorphism $\lambda:\cZ\to R$, we define the universal unramified module to be
$$\cM_{\lambda,R}:=W(k)[G/K]\otimes_{\cZ,\lambda}R.$$ In this section we establish some basic properties of $\cM_{\lambda,R}$ that will be essential in what follows.

\subsection{Top derivative of the universal module}\label{subsection:derivative}
Given a $W(k)$-algebra $R$, we define the functor $(-)^{(n)}:R[G]\text{-Mod}\to R\text{-Mod}$ to be the $U,\psi$-coinvariants, $$V^{(n)}:=V_{U,\psi}:= V/V(U,\psi),$$ where $V(U,\psi)$ is the sub-$R$-module generated by $\{uv-\psi(u)v:u\in U, v\in V\}$. This is the $n$'th ``derivative'' of the Bernstein--Zelevinsky formalism introduced in \cite{b-zI}. The derivative is exact. By the definition of $V(U,\psi)$ and right-exactness of tensor product, $(-)^{(n)}$ is compatible with extension of scalars in the sense that $(V\otimes_{W(k)} E)^{(n)} \cong V^{(n)}\otimes_{W(k)} E$ for any $W(k)$-module $E$.

Considering $W(k)[G/K]$ as a $\cZ[G]$-module, we can compute its $n$'th derivative. 

\begin{prop}\label{derivativefreerankone:cor}
The $\cZ$-module $(W(k)[G/K])^{(n)}$ is free of rank one. In particular, $(\cM_{\lambda,R})^{(n)}\cong R$, and if $E$ is a $\cZ$-module, then $(W(k)[G/K]\otimes_{\cZ}E)^{(n)}\cong E$.
\end{prop}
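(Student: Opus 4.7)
The plan is to construct an explicit $\cZ$-equivariant map $\beta : W(k)[G/K]^{(n)} \to \cZ$ using the universal spherical Whittaker function from Corollary~\ref{shintanispacefreerankone}, then verify it is an isomorphism by matching $W(k)$-bases on both sides.

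First I would define $\beta$. Applying Corollary~\ref{shintanispacefreerankone} with $R = \cZ$ and $\lambda = \mathrm{id}$ produces the universal Whittaker function $W^0_{\mathrm{id}} \in (\Ind_U^G\psi_{\cZ})^{K,\mathrm{id}}$ normalized by $W^0_{\mathrm{id}}(1) = 1$. By the universal property of $\cInd_K^G$ (Frobenius reciprocity), this yields a $G$-equivariant map $\alpha : W(k)[G/K] \to \Ind_U^G\psi_{\cZ}$ sending $1_K \mapsto W^0_{\mathrm{id}}$; since $W^0_{\mathrm{id}}$ lies in an eigenspace where the Hecke and coefficient $\cZ$-actions coincide, $\alpha$ is $\cZ$-equivariant. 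The functional $W \mapsto W(1)$ is $U$-equivariant for the $\psi$-twisted $U$-action on $\cZ$, so $f \mapsto \alpha(f)(1)$ factors through the $(U,\psi)$-coinvariants and gives a $\cZ$-linear $\beta : W(k)[G/K]^{(n)} \to \cZ$ with $\beta([1_K]) = 1$.

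Next I would exhibit a $W(k)$-basis of the source. The Iwasawa decomposition identifies $U\backslash G/K$ with $T/T(\cO_F) \cong \ZZ^n$ via $\varpi^\mu \leftrightarrow \mu$, so Mackey's formula gives $W(k)[G/K]|_U = \bigoplus_{\mu \in \ZZ^n}\cInd_{U_\mu}^U W(k)$, where $U_\mu := U \cap \varpi^\mu K \varpi^{-\mu}$. By Shapiro's lemma the $(U,\psi)$-coinvariants of the $\mu$-summand are $(\psi^{-1}|_{U_\mu})_{U_\mu}$, which equals $W(k)$ when $\psi|_{U_\mu}\equiv 1$ and vanishes otherwise (using $\ell \neq p$ to ensure $\psi(u_0)-1 \in W(k)^{\times}$ whenever $\psi(u_0) \neq 1$). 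The key combinatorial observation is that the superdiagonal entries of $u \in U_\mu$ lie in $\varpi^{\mu_i - \mu_{i+1}}\cO_F$, so the conductor-$0$ assumption on $\psi$ makes $\psi|_{U_\mu}$ trivial exactly when $\mu$ is dominant. Hence $W(k)[G/K]^{(n)}$ is $W(k)$-free on $\{\delta_\mu := [1_{\varpi^\mu K}]\}_{\mu \text{ dominant}}$.

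Finally I would evaluate $\beta$ on the basis. Since $\delta_\mu$ is the image of $\varpi^\mu \cdot 1_K$, Proposition~\ref{Shintani} yields
$$\beta(\delta_\mu) = W^0_{\mathrm{id}}(\varpi^\mu) = q^{\sum_i (i-n)\mu_i}\, S_\mu\bigl(T^{(1)},\, qT^{(2)},\, \dots,\, q^{n(n-1)/2}T^{(n)}\bigr) \in \cZ.$$
The prefactor is a unit in $W(k)$, and under the Satake isomorphism $q^{i(i-1)/2}T^{(i)}\leftrightarrow e_i$ the remaining factor corresponds to the Laurent Schur polynomial $S_\mu(X_1,\dots,X_n)$; these form a classical $W(k)$-basis of $W(k)[X_1^{\pm 1},\dots,X_n^{\pm 1}]^{S_n} = \cZ$ as $\mu$ ranges over dominant weights. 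Thus $\beta$ sends basis to basis up to units, is a $W(k)$-linear isomorphism, and therefore a $\cZ$-module isomorphism. The ``in particular'' claims follow by base-change, using the compatibility of $(-)^{(n)}$ with tensor products noted before the proposition: $(\cM_{\lambda,R})^{(n)} = W(k)[G/K]^{(n)} \otimes_{\cZ} R = \cZ \otimes_{\cZ} R = R$, and similarly $(W(k)[G/K]\otimes_{\cZ} E)^{(n)} = E$. I expect the main technical obstacle to be the Mackey/coinvariant computation, specifically identifying dominance of $\mu$ with triviality of $\psi|_{U_\mu}$; the basis matching at the end is routine given Proposition~\ref{Shintani} and the Satake--Schur dictionary.
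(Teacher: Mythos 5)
Your argument is correct, but it takes a genuinely different route from the paper. The paper also starts from Corollary~\ref{shintanispacefreerankone} with $\lambda=\mathrm{id}$, but instead of computing $(W(k)[G/K])^{(n)}$ directly it uses the universal property of $M:=W(k)[G/K]$ to identify $(\Ind_U^G\psi_{\cZ})^{K,\mathrm{id}}$ with the $\cZ$-dual $\Hom_{\cZ}(M^{(n)},\cZ)$, concludes that this dual is free of rank one, and then establishes that $M^{(n)}$ is reflexive: applying the same corollary over the residue field $\kappa(\fp)$ of each prime shows $M^{(n)}\otimes\kappa(\fp)$ is one-dimensional, so each localization is cyclic by Nakayama, and nonvanishing at the generic point forces each localization to be free of rank one. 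Your proof replaces this duality/reflexivity step with an explicit computation of both sides: the Iwasawa--Mackey decomposition exhibits $(W(k)[G/K])^{(n)}$ as $W(k)$-free on the classes $\delta_\mu$ for dominant $\mu$ (your dominance criterion for triviality of $\psi$ on $U_\mu$ is exactly the vanishing statement (2) inside the proof of Proposition~\ref{Shintani}, so it is consistent with the paper), and the Shintani formula matches this basis, up to units, with the Laurent--Schur basis of $\cZ$. What your approach buys is an explicit isomorphism --- in particular, that $\mathrm{ev}_1\circ\Lambda_{\cZ}$ itself induces $M^{(n)}\isomto\cZ$, a fact the paper asserts and uses later in the proof of Theorem~\ref{universalmodule:conj} --- plus the extra information that $M^{(n)}$ is $W(k)$-free on dominant weights. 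The cost is the (standard but nontrivial) combinatorial input that the $S_\mu$, for $\mu$ dominant in $\ZZ^n$, form a $W(k)$-basis of $W(k)[X_1^{\pm1},\dots,X_n^{\pm1}]^{S_n}$; the paper's version avoids all explicit computation at the price of the local commutative-algebra step.
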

\begin{proof}
Let $\lambda=\text{id}:\cZ\to \cZ$ be the identity map. By Corollary~\ref{shintanispacefreerankone}, the space $\cZ\cdot W_{\text{id}}^0=(\Ind_U^G\psi_{\cZ})^{K,\text{id}}$ is a free $\cZ$-module of rank 1. On the other hand, by the universal property of the universal unramified module $M:=W(k)[G/K]$, we have
\begin{align}\label{derivativefreerankone}
(\Ind_U^G\psi_{\cZ})^{K,\text{id}} &\cong \Hom_{\cZ[G]}(M,\Ind_U^G\psi_{\cZ}) \\
&\cong \Hom_{\cZ}(M^{(n)},\cZ)
\end{align}
We would be done if we could show that $M^{(n)}$ is a reflexive module, i.e. $M\cong \Hom_{\cZ}(\Hom_{\cZ}(M^{(n)},\cZ))$.
 
Let $\fp$ be a prime ideal of $\cZ$ and let $\lambda:\cZ\to \kappa$ denote the map to its residue field. Corollary~\ref{shintanispacefreerankone} and the universal property again show $$(\Ind_U^G\psi_{\kappa})^{K,\lambda} \cong \Hom_{\kappa}((M\otimes_{\lambda}\kappa)^{(n)},\kappa),$$ from which it follows that $(M\otimes\kappa)^{(n)} = M^{(n)}\otimes\kappa$ is one-dimensional over $\kappa$. By Nakayama, the localization $(M^{(n)})_{\fp}$ is cyclic. Applying this to the generic point $\eta = \{0\}$ of $\cZ$, we have that $(M^{(n)})_{\eta}\neq 0$, hence the annihilator of each localization $(M^{(n)})_{\fp}$ is zero in $\cZ_{\fp}$, hence $(M^{(n)})_{\fp}$ is free of rank 1, in particular it is reflexive. It follows that $M^{(n)}$ is reflexive.
\end{proof}

\subsection{Admissibility of the universal module}

As $W(k)[G/K]$ is the $K$-invariant subspace of $W(k)[G]$ under right-translation, $\cZ$ acts on $W(k)[G/K]$ on the right by convolution. This action commutes with left-$G$-translation, so there is a morphism $$\cZ\to \End_{W(k)[G]}(W(k)[G/K])^{\text{op}},$$ which is an isomorphism (\cite[Prop 1.16]{lazarus}). We will subsequently omit the ``op,'' as everything is commutative.

\begin{prop}\label{admissibility}
$\cM_{\lambda,R}$ is admissible as an $R[G]$-module for any $R$.
\end{prop}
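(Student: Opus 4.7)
The plan is to show that $\cM_{\lambda,R}^H$ is a finitely generated $R$-module for every compact open subgroup $H \leq G$, by reducing to the admissibility of $M := W(k)[G/K]$ over its endomorphism ring $\cZ$.

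First I would reduce to checking finite generation for $H$ in a cofinal system of small compact opens, namely those with pro-order prime to $\ell$ (for instance, sufficiently deep principal congruence subgroups). For such $H$, the functor $(-)^H$ on smooth $W(k)[G]$-modules is represented by an idempotent $e_H \in W(k)[H \backslash G/H]$ given by integration against the normalized Haar measure on $H$; in particular it is exact and commutes with arbitrary direct sums. The key point is then that the right $\cZ$-action on $M$ commutes with the left $G$-action, by the very definition $\cZ \cong \End_{W(k)[G]}(M)^{\text{op}}$. Consequently $e_H$ commutes with the $\cZ$-action on $M \otimes_{\cZ,\lambda} R$, giving the identification
\[
\cM_{\lambda,R}^H \;=\; e_H(M \otimes_{\cZ,\lambda} R) \;=\; (e_H M) \otimes_{\cZ,\lambda} R \;=\; M^H \otimes_{\cZ,\lambda} R.
\]

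It then suffices to show that $M^H$ is a finitely generated $\cZ$-module, for then tensoring with $R$ along $\lambda$ produces a finitely generated $R$-module. This finite generation is precisely the admissibility of the compact induction $\cInd_K^G \mathbf{1} \cong M$ over its endomorphism algebra $\cZ$, a nontrivial input drawn from Bernstein's theory of smooth representations of $p$-adic groups, which I would import from the cited reference \cite{h_bern}. This last step is the main obstacle: the rest of the argument is formal once one is allowed to invoke the Bernstein-type admissibility of $M$ over $\cZ$ as a black box, but establishing it is where the assumption $\ch(F)=0$ enters (as noted in the introduction's discussion of further questions). A minor secondary wrinkle, only visible for non-Noetherian $R$, is passing from the cofinal family of small $H$ to arbitrary compact open subgroups; for larger $H$ one has $\cM_{\lambda,R}^H \subseteq \cM_{\lambda,R}^{H'}$ for any $H' \subseteq H$ in the cofinal family, so Noetherianity of $R$ suffices, and otherwise one uses the standard convention of checking admissibility on a neighborhood basis.
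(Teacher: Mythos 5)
Your proof is correct and follows essentially the same route as the paper: reduce to the claim that $W(k)[G/K]^H$ is finitely generated over $\cZ$ (the paper phrases your idempotent/base-change step as ``admissibility is preserved by extension of scalars'', so it suffices to treat $R=\cZ$), and then import that claim from \cite{h_bern}. The one detail worth adding is that \cite{h_bern} gives admissibility of the finitely generated (indeed cyclic) module $W(k)[G/K]$ over the Bernstein center $A$ rather than over $\cZ$ directly, so one must observe that the $A$-action factors through a ring homomorphism $A\to \End_{W(k)[G]}(W(k)[G/K])\cong \cZ$, whence $A$-module generators of the $H$-invariants are also $\cZ$-module generators.
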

\begin{proof}
We only need to prove it when $R=\cZ$ since admissibility is preserved by extension of scalars.

This follows from the results of \cite{h_bern}. Let $A$ be the center of the category of smooth $W(k)[G]$-modules. $A$ is a commutative ring, which by definition acts $G$-equivariantly on each object in the category in a way that commutes with all morphisms in the category. It is proven in \cite{h_bern} that each finitely generated $W(k)[G]$-module is admissible as an $A[G]$-module (as explained on p. 4 of \cite{h_bern}, this is an immediate consequence of the construction of faithfully projective objects in each block of the category \cite[Cor 11.18, 11.19]{h_bern}, which are admissible over the centers of their respective blocks \cite[Prop 12.7]{h_bern}). 

Since $W(k)[G/K]$ is a cyclic $W(k)[G]$-module, it is admissible as an $A[G]$-module. Since the map $\cZ\to \End_{W(k)[G]}(W(k)[G/K])$ is an isomorphism, the action of $A$ on $W(k)[G/K]$ factors through a ring homomorphism $A\to \cZ$. If $H$ is any compact open subgroup and $v_1,\dots, v_r$ is a set of generators for $W(k)[G/K]^H$ as an $A$-module, then $v_1,\dots, v_r$ is also a set of generators as a $\cZ$-module.
\end{proof}

\subsection{Jacquet module of the universal module}\label{jacquetmoduleofunivmodule}

Let $P=MN$ be a proper parabolic subgroup of $G$, with Levi component $M$ and unipotent radical $N$. If $R$ is a $W(k)$-algebra, let $r_P^G:R[G]\text{-Mod}\to R[M]\text{-Mod}$ be the un-normalized parabolic restriction functor, and for any $V\in R[G]\text{-Mod}$, we let $p_N: V \to r_P^GV$ denote the canonical quotient map of $R[M]$-modules. Note that $r_P^G$ commutes with arbitrary extension of scalars, for the same reasons as $(-)^{(n)}$ (c.f. Subsection \ref{subsection:derivative}).

\begin{lemma}\label{jacquetmoduleM}
There is a map $\Phi: W(k)[G/K] \to W(k)[M/K_M]$ which is surjective and induces an isomorphism of $\cZ[M]$-modules $$r_P^G(W(k)[G/K]) \cong W(k)[M/K_M].$$ Moreover, $\Phi(1_K)=1_{K_M}$.
\end{lemma}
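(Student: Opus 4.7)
The plan is to construct $\Phi$ using the Iwasawa decomposition $G = PK$ and then identify the kernel with $V(N)$ by computing $N$-orbits on $G/K$. Since $P \cap K = K_M N_K$ where $N_K := N \cap K$, the unique factorization $p = mn$ in $P = MN$ together with the projection $P \to M$ descends to a well-defined $P$-equivariant surjection $\pi: G/K \cong P/(P\cap K) \to M/K_M$, sending $gK \mapsto mK_M$ where $g = mnk$ in Iwasawa form; a different factorization changes $m$ only by right multiplication by an element of $K_M$. I take $\Phi$ to be the $W(k)$-linear extension of $\pi$ to permutation modules. By construction, $\Phi$ is a surjective $W(k)[P]$-module map with $N$ acting trivially on the target, and satisfies $\Phi(1_K) = 1_{K_M}$.

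Since $\Phi$ vanishes on $V(N) = \langle v - nv \rangle$, it factors through a $W(k)[M]$-linear surjection $\bar\Phi: r_P^G W(k)[G/K] \twoheadrightarrow W(k)[M/K_M]$. For the injectivity of $\bar\Phi$, I use the elementary fact that the $N$-coinvariants of a permutation module $W(k)[X]$ are freely generated by the $N$-orbits on $X$. It remains to verify that the fiber of $\pi$ over $[m] \in M/K_M$ coincides with the single $N$-orbit $N \cdot mK \subset G/K$, which follows directly from Iwasawa: any coset in that fiber has a representative of the form $mnK$ for some $n \in N$. Thus $\pi$ induces a bijection $N\backslash G/K \isomto M/K_M$, and $\bar\Phi$ is the corresponding isomorphism of $W(k)[M]$-modules.

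Finally, for the $\cZ$-equivariance, $\cZ \cong \End_{W(k)[G]}(W(k)[G/K])$ acts by right convolution, and $r_P^G W(k)[G/K]$ inherits a $\cZ$-action by functoriality, while $\cZ$ acts on $W(k)[M/K_M]$ through the Satake inclusion $\iota: \cZ \hookrightarrow \cZ_M \cong \End_{W(k)[M]}(W(k)[M/K_M])$. Both sides are cyclic $W(k)[M]$-modules generated by the $\bar\Phi$-compatible spherical vectors $1_K$ and $1_{K_M}$, so $\cZ$-equivariance reduces to the identity $\bar\Phi(z\cdot 1_K) = \iota(z)\cdot 1_{K_M}$ for each $z \in \cZ$. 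Since $z \cdot 1_K$ equals $z$ viewed as a $K$-invariant function in $W(k)[G/K]$, applying $\Phi$ produces the function $mK_M \mapsto \sum_{n \in N/N_K} z(mn)$, which is the standard Satake transform on $1_{K_M}$ once the normalizations are matched with Lazarus's conventions. The main obstacle is precisely this reconciliation---tracking the $q^{i(i-1)/2}$ twists in the definition of $\iota$ against the natural $N$-integration map realized by $\Phi$---while everything else is a direct Iwasawa-decomposition computation.
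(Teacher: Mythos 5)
Your construction of $\Phi$ as the linearization of the Iwasawa projection $G/K\cong P/(P\cap K)\to M/K_M$, the identification of the fibers of that projection with the $N$-orbits on $G/K$, and the resulting $M$-equivariant isomorphism on $N$-coinvariants are all correct, and this part is more self-contained than the paper's treatment: the paper instead defines $\Phi$ by the integration formula $(\Phi f)(m)=\delta_P(m)\int_N f(mn)\,dn$ and defers both surjectivity and the coinvariants computation to Bushnell--Kutzko's Lemma 10.3. Your reduction of the $\cZ$-equivariance to the single identity $\bar\Phi(z*1_K)=\iota(z)*1_{K_M}$ on the cyclic generator is also valid.

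The gap is that this last identity is precisely where all the content sits, and as you have set things up it is \emph{false} without a correction. Your explicit computation gives $\bar\Phi(z*1_K)(mK_M)=\sum_{n\in N/N_K}z(mn)=\int_N z(mn)\,dn$, the \emph{un-normalized} constant term of $z$ along $P$. The paper's $\iota:\cZ\hookrightarrow\cZ_M$ is defined through the \emph{normalized} Satake isomorphism (the one with the $\delta_B^{-1/2}$ factor, matching the Bernstein presentation used later), and transitivity of the Satake transform identifies $\iota(z)$ with the normalized constant term $m\mapsto\delta_P(m)^{-1/2}\int_N z(mn)\,dn$. So your map intertwines the $\cZ$-action with the embedding $\delta_P^{1/2}\cdot\iota$, which differs from $\iota$ by a nontrivial unramified character of $M$ --- a twist of $\cZ_M$, not a unit of it. A telltale sign is that your $\Phi$ involves no square roots of $q$ at all, whereas $\iota$ genuinely does; this is exactly why the paper's formula carries a modulus factor and why the paper remarks that the construction requires $\sqrt q\in W(k)$. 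To close the gap you must either build the diagonal factor $\delta_P(m)^{\pm 1/2}$ into $\Phi$ (and then re-examine the $M$-equivariance, which becomes equivariance for a $\delta_P^{\pm 1/2}$-twisted action, absorbed by the isomorphism $W(k)[M/K_M]\otimes\delta_P^{\pm 1/2}\cong W(k)[M/K_M]$), or prove directly that the embedding $\cZ\to\cZ_M$ realized by the plain Iwasawa projection is the one used in the rest of the paper; the first option is what Bushnell--Kutzko's lemma, cited in the paper, actually carries out.
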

\begin{proof}
We turn to \cite[\S 10]{bushnell_kutzko}, or \cite[2.3]{kato_eigen} for the normalized version. Let $dn$ denote the Haar measure on $N$ normalized so that $K\cap N$ has measure 1. The map given in \cite[Lemma 10.3]{bushnell_kutzko} by $$(\Phi f)(m) = \delta_P(m)\int_Nf(mn)dn\text{,\ \ \  for $m\in M$}$$ makes sense over the base ring $W(k)$, since $W(k)$ contains a square root of $q$. Its $\cZ$-equivariance is immediate. The proof that it induces an isomorphism $$r_P^G(W(k)[G/K]) \cong W(k)[M/K_M]$$ exactly follows the proof of \cite[Lemma 10.3]{bushnell_kutzko} except it is simpler because we are in the special case where the representation of $K$ under consideration is the trivial character on $W(k)$. The fact that $\Phi(1_K)=1_{K_M}$ follows directly from the explicit description of the map $\Phi$.
\end{proof}

\section{Flatness of the universal module}\label{flatness:section}

In this section we prove, for general linear groups, a conjecture of Lazarus that $\cM_{\lambda,R}$ is flat over $R$ (c.f. \cite{ lazarus, lazarus_these, bellaiche_these,bel_ot}). This section is a good warm-up for Section~\ref{mainthm:section}.

We require the following lemma generalizing the fact that cuspidal representations are generic. We will also use it in Section~\ref{mainthm:section}.

\begin{lemma}\label{cuspidal=generic}
Let $R$ be any $W(k)$-algebra, let $V$ be an admissible $R[G]$-module such that $r_P^GV=0$ for all proper parabolic subgroups $P$. Then either $V=0$ or $V^{(n)}\neq 0$. 
\end{lemma}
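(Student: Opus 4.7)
The plan is to restrict $V$ to the mirabolic subgroup $\mathcal{P}\subset G$ and to invoke the Bernstein--Zelevinsky filtration, which expresses $V|_{\mathcal{P}}$ as a successive extension built out of all the BZ derivatives $V^{(k)}$, $k=1,\dots,n$. The cuspidality hypothesis will kill every intermediate derivative, leaving only $V^{(n)}$.

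Let $\mathcal{P}=\{g\in G:ge_n=e_n\}$ and recall from \cite{b-zI} the four functors $\Psi^{\pm},\Phi^{\pm}$ (un-normalized versions) between the smooth module categories of $\mathcal{P}$, $GL_{n-1}$, and $\mathcal{P}_{n-1}$ (the mirabolic of $GL_{n-1}$). For $1\le k\le n$ set $V^{(k)}:=\Psi^-\circ(\Phi^-)^{k-1}(V|_{\mathcal{P}})$, an $R[GL_{n-k}]$-module; for $k=n$ this agrees with the top derivative of Section~\ref{subsection:derivative}. All four functors are built from exact $R$-linear coinvariants on abelian unipotent subgroups, so they commute with base change in $R$ and the BZ filtration theorem remains valid over $R$: $V|_{\mathcal{P}}$ carries a natural filtration whose graded pieces are $(\Phi^+)^{k-1}\Psi^+(V^{(k)})$ for $k=1,\dots,n$. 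Since $\Phi^+,\Psi^+$ are faithful and exact, the $k$-th graded piece vanishes iff $V^{(k)}=0$.

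The key step is to check that $V^{(k)}=0$ for $1\le k\le n-1$. Unwinding the iterated coinvariants that define $V^{(k)}$, one identifies $V^{(k)}$ with $V_{H_k,\chi_k}$, where $H_k$ is the unipotent radical $N_{n-k,1^k}$ of the standard parabolic of $G$ with Levi $GL_{n-k}\times GL_1^k$, and $\chi_k:H_k\to R^{\times}$ is the character assembled from the nontrivial characters used in the successive $\Phi^-$ applications. An explicit root check shows that $\chi_k$ is nontrivial only on the simple-root subgroups $U_{\alpha_{j,j+1}}$ with $n-k+1\le j\le n-1$; all of these lie in the $GL_k$-block and thus outside the sub-unipotent $N_{n-k,k}\subset N_{n-k,1^k}$. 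Hence $\chi_k|_{N_{n-k,k}}$ is trivial, so the quotient map $V\twoheadrightarrow V^{(k)}$ factors through $V/V(N_{n-k,k},1)=r_{P_{n-k,k}}^{G}V$. By hypothesis this Jacquet module vanishes, so $V^{(k)}=0$.

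If $V\ne 0$, then $V|_{\mathcal{P}}\ne 0$ (they share the same underlying $R$-module), so the BZ filtration must have a nonzero graded piece. By the vanishing just established, the only possibility is the piece indexed by $k=n$, forcing $V^{(n)}\ne 0$. The main obstacle is the explicit bookkeeping that $H_k$ and $\chi_k$ are as claimed, requiring a careful tracking of which unipotent subgroup and which character are killed at each application of $\Phi^-$ and $\Psi^-$; everything else is a formal consequence of the BZ filtration, which is classical over $\CC$ and extends to $R$ because all of the relevant functors are defined by exact coinvariants commuting with base change. Admissibility of $V$ is not used in this sketch; it is presumably assumed for the applications of the lemma elsewhere in the paper.
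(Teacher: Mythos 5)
Your argument is correct, but it takes a genuinely different route from the paper's. The paper localizes at maximal ideals of $R$, uses admissibility to reduce to a finite-length representation over the residue field $\kappa(\fm)$, observes that the socle of $V\otimes\kappa(\fm)$ is a direct sum of irreducible cuspidal representations, invokes the classical fact that cuspidal irreducibles of $GL_n$ over a field are generic, and finishes with Nakayama. You instead run the Bernstein--Zelevinsky filtration of $V|_{\mathcal{P}}$ directly over $R$ and kill the intermediate derivatives by factoring each $V^{(k)}$, $1\le k\le n-1$, through a Jacquet module; your bookkeeping is right: $\chi_k$ is nontrivial only on the simple root subgroups inside the lower-right $GL_k$ block, hence trivial on $N_{n-k,k}\subset H_k$, so $V_{H_k,\chi_k}$ is a quotient of $r^G_{P_{n-k,k}}V=0$, and only the maximal parabolics are needed. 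Your approach buys more than the paper's: it uses neither admissibility nor any reduction to fields, so it proves the statement for arbitrary smooth $V$ (the paper's hypothesis of admissibility is exactly what makes its finite-length/socle argument run). What it costs is the validity of the BZ filtration and the identification $V^{(k)}\cong V_{H_k,\chi_k}$ over an arbitrary $W(k)$-algebra; this is not quite the pure formality you suggest --- the filtration theorem is proved by a geometric orbit argument on $\mathcal{P}$, not merely by exactness of coinvariants --- but it is established over such base rings in \cite[\S 3.1]{eh}, which the paper already cites, so the reference is available. Note finally that both proofs are specific to $GL_n$: the paper's through ``cuspidal implies generic,'' yours through the mirabolic derivative formalism; neither would survive for groups with non-generic cuspidals, consistent with the paper's remark in its ``Further questions'' subsection.
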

\begin{proof}
If $\fm$ is a maximal ideal then $r_P^G(V_{\fm}) = (r_P^GV)_{\fm}$ and $(V^{(n)})_{\fm} = V_{\fm}^{(n)}$, so it suffices to prove the result after assuming $R$ is a local ring. 

If the result holds for all finitely generated submodules of $V$, it also holds for $V$ itself, thus without loss of generality we may replace $V$ by a submodule that is finitely generated over $R[G]$. In particular, $V\otimes\kappa(\fm)$ is admissible and finitely generated, hence of finite length. 

Since $r_P^GV$ is zero, so is $r_P^G(V\otimes_R\kappa(\fm))$ for all proper parabolics $P$. Hence the socle $S$ of $V\otimes\kappa(\fm)$ satisfies $r_P^GS = 0$ for all proper parabolics. Therefore $S$ is either zero, or a finite direct sum of irreducible cuspidal $\kappa(\fm)[G]$-modules. Since cuspidals are generic, we have $S=0$ or $S^{(n)}\subset (V\otimes\kappa(\fm))^{(n)}$ is nonzero. If we are in the case where $S=0$, then $V\otimes\kappa(\fm)$ must also be zero, in which case $V=0$ by Nakayama \cite[2.1.7]{eh}. If we are in the case where $V^{(n)}\otimes_R\kappa(\fm)\cong (V\otimes_R\kappa(\fm))^{(n)}\neq 0$, then $V^{(n)}$ cannot be zero.
\end{proof}

\begin{thm}\label{flatness}
For any $W(k)$-algebra $R$ and any homomorphism $\lambda:\cZ\to R$, the module $\cM_{\lambda,R}$ is flat over $R$.
\end{thm}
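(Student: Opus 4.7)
The plan is to prove the stronger statement that $W(k)[G/K]$ is flat as a $\cZ$-module; the theorem then follows by base change, since $\cM_{\lambda,R} = W(k)[G/K] \otimes_{\cZ,\lambda} R$. As $\cZ$ is Noetherian, flatness over $\cZ$ can be tested on injections of finitely generated $\cZ$-modules, so I will fix such an injection $R'\hookrightarrow R''$ and show
$$\cK := \ker\bigl(W(k)[G/K] \otimes_\cZ R' \to W(k)[G/K] \otimes_\cZ R''\bigr) = 0.$$
The admissibility of $W(k)[G/K]$ over $\cZ$ (Proposition~\ref{admissibility}), together with finite generation of $R'$, implies that $W(k)[G/K]\otimes_\cZ R'$, hence its submodule $\cK$, is admissible as a $\cZ[G]$-module.

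I would then induct on $n$. The base case $n=1$ is trivial, since $W(k)[F^\times/\cO_F^\times] \cong \cZ$.

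For the inductive step, I would first compute $r_P^G \cK$ for each proper parabolic $P = MN$ with Levi $M = \prod_i GL_{n_i}$, each $n_i < n$. Since $r_P^G$ is exact and commutes with $\otimes_\cZ$,
$$r_P^G \cK = \ker\bigl(r_P^G W(k)[G/K] \otimes_\cZ R' \to r_P^G W(k)[G/K] \otimes_\cZ R''\bigr).$$
By Lemma~\ref{jacquetmoduleM}, $r_P^G W(k)[G/K] \cong W(k)[M/K_M] \cong \bigotimes_i W(k)[GL_{n_i}/K_{n_i}]$. The inductive hypothesis applied to each factor (with $R = \cZ_{n_i}$) gives flatness of each $W(k)[GL_{n_i}/K_{n_i}]$ over $\cZ_{n_i}$, and therefore flatness of $r_P^G W(k)[G/K]$ over $\cZ_M \cong \bigotimes_i \cZ_{n_i}$. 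Combined with flatness (indeed freeness) of $\cZ_M$ over $\cZ$---a consequence of the Satake description and classical invariant theory for nested reflection groups $W_M \subset W_G$---we conclude $r_P^G W(k)[G/K]$ is flat over $\cZ$, so $r_P^G \cK = 0$.

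Next, the top derivative: by Proposition~\ref{derivativefreerankone:cor}, $W(k)[G/K]^{(n)}$ is free of rank one over $\cZ$. Since $(-)^{(n)}$ is exact and commutes with $\otimes_\cZ$, this yields $\cK^{(n)} = \ker(R' \to R'') = 0$. Finally, Lemma~\ref{cuspidal=generic} applied to $\cK$---which is admissible, satisfies $r_P^G \cK = 0$ for all proper $P$, and has $\cK^{(n)} = 0$---forces $\cK = 0$, completing the induction.

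The main technical obstacle is verifying flatness of $\cZ_M$ over $\cZ$. Via Satake, this reduces to showing that $W(k)[X_1^{\pm 1},\ldots,X_n^{\pm 1}]^{W_M}$ is a free module over $W(k)[X_1^{\pm 1},\ldots,X_n^{\pm 1}]^{S_n}$, an invariant-theoretic statement of Chevalley--Shephard--Todd type which holds over $W(k)$ without any banality assumption on $\ell$.
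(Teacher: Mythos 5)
Your proposal is correct and follows essentially the same route as the paper: induction on $n$, reduction to injections of finitely generated $\cZ$-modules, vanishing of the Jacquet modules of the kernel via Lemma~\ref{jacquetmoduleM} and the inductive hypothesis (plus flatness of $\cZ_M$ over $\cZ$), vanishing of the top derivative via Proposition~\ref{derivativefreerankone:cor}, and the conclusion via Lemma~\ref{cuspidal=generic}. Your explicit justification of the flatness of $\cZ_M$ over $\cZ$ by invariant theory, and of the admissibility of the kernel needed for Lemma~\ref{cuspidal=generic}, fills in details the paper leaves implicit.
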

\begin{proof}
We proceed by induction. For $n=1$, $W(k)[G/K]=\cZ$ and $\cZ\cong W(k)[X_1^{\pm 1}]$. The module $W(k)[G/K]$ is free of rank one over $\cZ$, hence flat. Since flatness is preserved by extension of scalars, so is $\cM_{\lambda,R}$.

For $n>1$, it suffices to prove that for any injection $\phi: E\hookrightarrow E'$ of finitely generated $\cZ$-modules, the map \begin{align*}
W(k)[G/K]\otimes_{\cZ} E &\to W(k)[G/K]\otimes_{\cZ}E'\\
1_K\otimes e &\mapsto 1_K\otimes \phi(e)
\end{align*} is injective. Let $V$ be the kernel of this map. 

By the compatibility of $(-)^{(n)}$ with change of scalars, we can identify $$(W(k)[G/K]\otimes E)^{(n)}\cong E$$ and similarly for $E'$. The map on derivatives $$(W(k)[G/K]\otimes E)^{(n)} \to (W(k)[G/K]\otimes E')^{(n)}$$ is given by $E\to E'$ which is injective, therefore $V^{(n)}=0$.

Let $P=MN$ be a proper parabolic subgroup. By the compatibility of the Jacquet functor $r_P^G$ with change of scalars we can identify $$r_P^G(W(k)[G/K]\otimes_{\cZ} E) \cong r_P^G(W(k)[G/K])\otimes_{\cZ} E,$$ and similarly for $E'$. By Lemma~\ref{jacquetmoduleM}, we may further identify
$$r_P^G(W(k)[G/K]\otimes_{\cZ} E)\cong W(k)[M/K_M]\otimes_{\cZ}E.$$  As $P$ is a proper parabolic subgroup, $M$ is a product of groups $G_1\times\dots\times G_r$ along the diagonal, with $G_i:=GL_{n_i}(F)$ for $n_i<n$. We have a decomposition
$$W(k)[M/K_M]= W(k)[G_1/K_1]\otimes\cdots \otimes W(k)[G_r/K_r]$$ compatible with the decomposition $\cZ_M = \cZ_{G_1}\otimes\cdots \otimes \cZ_{G_r}$, and by applying the induction hypothesis we conclude that $W(k)[M/K_M]$ is flat over $\cZ_M$. Since, in addition, $\cZ_M$ is flat over $\cZ$ we conclude that $W(k)[M/K_M]$ is flat over $\cZ$. Thus the map $$W(k)[M/K_M]\otimes_{\cZ}E\to W(k)[M/K_M]\otimes_{\cZ}E'$$ is injective. Therefore $r_P^G(V)=0$ for all $P$, and the Theorem follows by Lemma~\ref{cuspidal=generic}.
\end{proof}

\begin{cor}\label{freeness}
For any homomorphism $\lambda:\cZ\to W(k)$, the $W(k)$-module $\cM_{\lambda,W(k)}$ is free.
\end{cor}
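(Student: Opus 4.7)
The plan is to combine Theorem~\ref{flatness} with the admissibility statement of Proposition~\ref{admissibility} and some elementary module theory over the discrete valuation ring $W(k)$. Write $\cM := \cM_{\lambda,W(k)}$ and let $\pi$ denote a uniformizer of $W(k)$. By Theorem~\ref{flatness}, $\cM$ is flat over $W(k)$, and since $W(k)$ is a DVR this is equivalent to $\pi$-torsion-freeness. By Proposition~\ref{admissibility}, $\cM^H$ is a finitely generated $W(k)$-module for every compact open subgroup $H \subset G$, and as a submodule of the torsion-free module $\cM$ it is itself torsion-free, hence free of finite rank.

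To assemble these pieces into a global basis, I would fix a descending sequence $H_1 \supset H_2 \supset \cdots$ of compact open subgroups of $G$ forming a neighborhood basis of the identity (for instance, the principal congruence subgroups of $K$). Smoothness of $\cM$ implies $\cM = \bigcup_i \cM^{H_i}$. The crucial step is to check that each successive quotient $\cM^{H_{i+1}}/\cM^{H_i}$ is again torsion-free: if $v \in \cM^{H_{i+1}}$ satisfies $\pi v \in \cM^{H_i}$, then for each $h \in H_i$ one has $\pi(hv - v) = 0$, and torsion-freeness of $\cM$ forces $hv = v$, giving $v \in \cM^{H_i}$. Being finitely generated and torsion-free over the DVR $W(k)$, the quotient $\cM^{H_{i+1}}/\cM^{H_i}$ is therefore free.

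Since each quotient $\cM^{H_{i+1}}/\cM^{H_i}$ is free, hence projective, the short exact sequence
$$0 \to \cM^{H_i} \to \cM^{H_{i+1}} \to \cM^{H_{i+1}}/\cM^{H_i} \to 0$$
splits, giving a decomposition $\cM^{H_{i+1}} \cong \cM^{H_i} \oplus F_{i+1}$ with $F_{i+1}$ a free $W(k)$-module. Chaining these splittings, $\cM = \cM^{H_1} \oplus \bigoplus_{i \geq 1} F_{i+1}$ is a direct sum of free $W(k)$-modules, which is free. The only real obstacle is taking enough care to realize $\cM$ as a countable increasing union of free finitely generated submodules with free successive quotients; once that is done, everything else reduces to standard facts about finitely generated modules over a DVR.
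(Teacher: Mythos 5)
Your proof is correct, and it takes a genuinely different (and more self-contained) route than the paper. The paper's proof is short: it invokes the decomposition $\cM_{\lambda,W(k)}=\bigoplus_i(\cM_{\lambda,W(k)})_i$ of Emerton--Helm \cite[\S 2.1]{eh}, which is built from the averaging idempotents attached to a cofinal chain of compact open subgroups of pro-order invertible in $W(k)$; each summand is finitely generated by admissibility and flat as a direct summand of a flat module, hence free. You instead work with the filtration $\cM^{H_1}\subset \cM^{H_2}\subset\cdots$ by fixed vectors directly, and replace the idempotent splitting by the observation that each successive quotient $\cM^{H_{i+1}}/\cM^{H_i}$ is $\pi$-torsion-free (your computation $\pi(hv-v)=0\Rightarrow hv=v$ is exactly right and uses flatness of $\cM$ over the DVR $W(k)$), hence free, hence the filtration splits step by step. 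Both arguments rest on the same two inputs — Theorem~\ref{flatness} and Proposition~\ref{admissibility} — and both realize $\cM$ as a countable direct sum of finite free $W(k)$-modules; what yours buys is that the splitting comes from elementary module theory over a DVR rather than from the existence of averaging idempotents (i.e., you never need the pro-order of the $H_i$ to be invertible), at the cost of the explicit torsion-freeness check. One small point worth making explicit if you write this up: choose the splittings compatibly so that $\cM^{H_{n}}=\cM^{H_1}\oplus F_2\oplus\cdots\oplus F_{n}$ for all $n$, so that passing to the union over $n$ legitimately yields $\cM=\cM^{H_1}\oplus\bigoplus_{i\geq 2}F_i$.
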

\begin{proof}
$\cM_{\lambda,W(k)}$ is the direct sum of the submodules $(\cM_{\lambda,W(k)})_i$ constructed in \cite[\S 2.1]{eh}. By admissibility, each $(\cM_{\lambda,W(k)})_i$ is finitely generated over $W(k)$ (\cite[2.1.5]{eh}). Since direct summands of flat modules are flat, $(\cM_{\lambda,W(k)})_i$ is flat over $W(k)$ by Theorem~\ref{flatness}, hence free.
\end{proof}

We now deduce Theorem~\ref{lazarus:thm} of the introduction from Corollary~\ref{freeness}.
\begin{cor}\label{JHofinduced}
For any $\chi$ in the Weyl orbit corresponding to $\lambda:\cZ\to k$, $\cM_{\lambda,k}$ and $i_B^G\chi$ have the same semisimplification.
\end{cor}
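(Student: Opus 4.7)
The plan is to lift $\chi$ to characteristic zero, apply the banal case ($\ell=0$) of Conjecture~\ref{lazarus:conj} already known from \cite{lazarus}, and descend back to $k$ using the $W(k)$-freeness of $\cM_{\tilde\lambda,W(k)}$ established in Corollary~\ref{freeness}. Via the Satake isomorphism $\cZ\cong W(k)[X_1^{\pm1},\dots,X_n^{\pm1}]^{S_n}$, the map $\lambda$ is determined by an $S_n$-orbit of Satake parameters $(\alpha_1,\dots,\alpha_n)\in(k^\times)^n$. Teichm\"uller lifts of the $\alpha_i$ to $W(k)^\times$ produce a Hecke character $\tilde\lambda:\cZ\to W(k)$ reducing to $\lambda$, together with a Weyl orbit of unramified characters $\tilde\chi:T\to W(k)^\times$ lifting $\chi$. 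Setting $E:=\Frac(W(k))$, a field of characteristic zero in which $\ell=0$ is banal, \cite{lazarus} immediately yields $\text{JH}(\cM_{\tilde\lambda,E})=\text{JH}(i_B^G\tilde\chi_E)$.

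Next I would compare $W(k)$-flat integral models. By Corollary~\ref{freeness}, $\cM_{\tilde\lambda,W(k)}$ is free over $W(k)$, and a direct Iwasawa-decomposition argument shows that the integral principal series $i_B^G\tilde\chi_{W(k)}$ (locally constant functions $G\to W(k)$ satisfying the usual transformation rule) is $W(k)$-free as well. Both thus interpolate between $\cM_{\lambda,k}$ and $\cM_{\tilde\lambda,E}$ (respectively between $i_B^G\chi$ and $i_B^G\tilde\chi_E$) under the two base changes. For every compact open subgroup $K'\subset G$ of pro-order prime to $\ell$, admissibility (Proposition~\ref{admissibility}) combined with $W(k)$-flatness makes the $K'$-invariants of these integral models into finitely generated free $W(k)$-modules carrying an action of the Hecke algebra $\cH_{K'}:=W(k)[K'\backslash G/K']$. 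For any $z\in\cH_{K'}$, its trace lifts to $W(k)$ and specializes to traces on both $E$- and $k$-base changes; since the $E$-traces coincide by the previous paragraph, so do the $k$-traces. Hence the Hecke traces on $(\cM_{\lambda,k})^{K'}$ and $(i_B^G\chi)^{K'}$ agree for every such $K'$ and every $z$.

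The final step is to invoke the Brauer-character principle for finite-length admissible smooth $k[G]$-modules, as developed in Vign\'eras' theory (\cite{vig}): the class in the Grothendieck group of such a module is determined by its Hecke traces at all levels $K'$ of pro-order prime to $\ell$. This forces $[\cM_{\lambda,k}]=[i_B^G\chi]$ in $K_0$, which is the desired equality of Jordan--H\"older multisets. \textbf{The main obstacle} is precisely this last appeal: inferring equality of composition series from equality of Hecke traces in positive characteristic $\ell$ is nontrivial and requires the decomposition-map / Brauer-character machinery for modular smooth representations of $p$-adic groups. In the banal setting the relevant Hecke algebras are semisimple and traces determine multiplicities directly, but it is the non-banal regime where this step carries real content.
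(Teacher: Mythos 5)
Your overall strategy coincides with the paper's: lift $\lambda$ and $\chi$ to $W(k)$, use the known characteristic-zero case of Conjecture~\ref{lazarus:conj} to identify the two representations (up to semisimplification) over $\Frac(W(k))$, and descend using the fact that $\cM_{\tilde\lambda,W(k)}$ (by Corollary~\ref{freeness}) and $i_B^G\tilde\chi$ (admissible and torsion-free, hence free) are $W(k)$-lattices. Where the paper simply concludes by citing the Brauer--Nesbitt theorem for $\ell$-modular smooth representations of $p$-adic groups (\cite{vig_whitt}), you attempt to re-derive that descent step by hand via Hecke traces on $K'$-invariants, and you correctly identify that this is where all the content lies. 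But as you have stated it, the final step is not merely unproved --- it is false: the class of a finite-length smooth $k[G]$-module in the Grothendieck group is \emph{not} determined by the $k$-valued traces of Hecke operators on its $K'$-invariants. For a finite-dimensional algebra over a field of characteristic $\ell$, the ordinary traces of pairwise non-isomorphic simple modules can be linearly dependent (this is exactly the phenomenon that forces one to introduce Brauer characters, which take values in characteristic zero, rather than ordinary mod-$\ell$ characters). So ``the $k$-traces agree for all $K'$ and all $z$'' does not by itself force $[\cM_{\lambda,k}]=[i_B^G\chi]$.

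What rescues the argument is precisely the extra information you have but do not exploit in the final step: the traces agree already in $W(k)$, i.e.\ you have two $W(k)$-lattices whose generic fibers have equal semisimplification. The statement that the semisimplification of the special fiber of a lattice depends only on the semisimplification of its generic fiber is the Brauer--Nesbitt theorem, proved in this smooth $p$-adic setting by Vign\'eras in \cite{vig_whitt} (by reducing, much as you sketch, to the finite-dimensional Hecke algebras $\cH_{K'}$ at each level and applying decomposition-map/Brauer-character theory there). So your proof becomes complete once you replace the appeal to ``$k$-valued Hecke traces determine the class in $K_0$'' by a citation of that theorem --- which is exactly what the paper does. Two smaller remarks: (i) your use of the characteristic-zero case of Lazarus's result is a legitimate alternative to the paper's route, which instead produces an actual isomorphism $\cM_{\tilde\lambda,\overline{\cK}}\cong i_B^G\tilde\chi_{\overline{\cK}}$ for a well-chosen $\chi$ in the Weyl orbit using Theorem~\ref{universalmodule:conj:k} in characteristic zero together with cyclicity of the spherical vector; (ii) since the corollary is asserted for \emph{any} $\chi$ in the Weyl orbit, you should note (as the paper implicitly does) that $\mathrm{JH}(i_B^G\chi)$ is independent of the choice of $\chi$ in the orbit, which again follows from the same Brauer--Nesbitt comparison applied to the lattices $i_B^G\tilde\chi$ for the various $\tilde\chi$ in the orbit.
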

\begin{proof}
Choose a lift $\tilde{\chi}:T \to W(k)^{\times}$ and a corresponding lift $\tilde{\lambda}:\cZ\to W(k)$. Let $\overline{K}$ be an algebraic closure of the fraction field of $W(k)$ (it is isomorphic to $\mathbb{C}$ by virtue of its cardinality and characteristic). Without loss of generality, suppose $\chi$ and its lift $\tilde{\chi}$ are chosen in the Weyl orbit in such a way that $i_B^G\tilde{\chi}\otimes_{W(k)}\overline{\cK}$ has all of its subrepresentations generic (this is the does-not-proceed condition of \cite[4.3.2]{eh}). Then $W(k)[G/K]\otimes_{\cZ,\tilde{\lambda}}\overline{\cK}$ embeds in $i_B^G\tilde{\chi}\otimes_{W(k)}\overline{\cK}$ by Theorem~\ref{universalmodule:conj:k} in characteristic 0 (c.f. \cite[Lemma 3.2.2(4)]{eh}). On the other hand, the map $W(k)[G/K]\otimes_{\cZ,\tilde{\lambda}}\overline{\cK}\to i_B^G\tilde{\chi}\otimes_{W(k)}\overline{\cK}$ is also a surjection because for our choice of $\chi$, $i_B^G\tilde{\chi}\otimes_{W(k)}\overline{\cK}$ is also generated by its spherical vector by \cite[Prop 5.1]{lazarus}. (Alternatively, one could apply the result we are currently proving, since it is already known in characteristic zero by \cite{lazarus}). Hence $$W(k)[G/K]\otimes_{\cZ,\tilde{\lambda}}\overline{\cK}\cong i_B^G\tilde{\chi}\otimes_{W(k)}\overline{\cK}.$$ By Corollary~\ref{freeness}, $W(k)[G/K]\otimes_{\cZ,\tilde{\lambda}} W(k)$ is an integral structure. On the other hand, $i_B^G\tilde{\chi}$ is an integral structure since it is admissible and torsion-free over $W(k)$ (hence free). Now apply the Brauer--Nesbitt theorem (\cite{vig_whitt}) to the two $W(k)$-lattices $W(k)[G/K]\otimes_{\cZ,\tilde{\lambda}}W(k)$ and $i_B^G\tilde{\chi}$, and conclude that their mod-$\ell$ reductions have the same semisimplifications.
\end{proof}

\section{Jacquet module of the Whittaker space}\label{jacquetmodule:section}
In this section we will investigate the Jacquet module of the space $\Ind_U^G\psi_R$. First we will recall a result of Bernstein and Zelevinsky from \cite[\S 5]{b-zI}, which holds over a $W(k)$-algebra $R$.
\subsection{Composition of restriction and induction}\label{subsection:composition}
In this subsection, we will extend the classical (twisted) restriction-induction result from \cite[\S 5]{b-zI} to arbitrary $W(k)$-algebras $R$. We refer to \cite{b-zI} for any unexplained notation.

Let $G$ be a locally profinite group posessing a compact open subgroup with pro-order invertible in $W(k)$. Let $\mathcal{C}(G)$ be the category whose objects consist of pairs $(R,V)$, where $R$ is a $W(k)$-algebra and $V$ is a smooth $R[G]$-module, and whose morphisms consist of pairs $(\lambda,\iota)$, where $\lambda:R\to R'$ is a $W(k)$-algebra morphism and $\iota:V\otimes_{R,\lambda}R'\to V'$ is a morphism of $R'[G]$-modules.

Let $B$, $T$, $U$, $\bar{P}$, $M$, $\bar{N}$ be closed subgroups, $\psi$ a $W(k)$-valued character of $U$, and $\eta$ a $W(k)$-valued character of $\bar{N}$. This choice of letters aligns with the rest of the present paper, but not with \cite{b-zI}:
\begin{center}
\begin{tabular}{|c|c|}
\hline
\text{Our Notation} & \text{Notation in \cite{b-zI}}\\
\hline
$B=TU$ & $P=MU$\\
$\bar{P}=M\bar{N}$ & $Q=NV$\\
$\psi:U\to W(k)^{\times}$ & $\theta:U\to \mathbb{C}^{\times}$\\
$\eta:\bar{N}\to W(k)^{\times}$ & $\psi:V\to \mathbb{C}^{\times}$\\
\hline
\end{tabular}
\end{center}
We make the following assumptions: 
\begin{enumerate}
\item $B=TU$, $\bar{P}=M\bar{N}$, $T\cap U = M\cap \bar{N} = \{1\}$, $T$ normalizes $U$ and $\psi$, and $M$ normalizes $\bar{N}$ and $\eta$. Under this assumption, the compatibility of twisted induction and restriction with tensor products gives the normalized functors $i_{U,\psi}: \cC(T) \to \cC(G)$ and $r_{\bar{N},\eta}:\mathcal{C}(G)\to \mathcal{C}(M)$, respectively.
\item $G$ is countable at infinity, $U$ and $\bar{N}$ are limits of compact subgroups
\item the action of $\bar{P}$ on the coset space $X:= B \backslash G$ via $g\cdot Bh = Bhg^{-1}$, has a finite number of orbits. In this case there is an ordering $Z_1,\dots, Z_k$ of the orbits such that $Z_1$, $Z_1\cup Z_2$, $\dots$, $Z_1\cup\dots\cup Z_k$ are open.
\end{enumerate}
For each orbit $Z\subset X$ choose $\bar{w}\in G$ such that $B\bar{w}^{-1}$ is a point in $Z$, and let $w$ denote the inner automorphism $w(g):=\bar{w}g\bar{w}^{-1}$.
\begin{enumerate}[resume]
\item We say a group $H$ is decomposed with respect to $(M,\bar{N})$, or $(T,U)$ respectively if $H\cap M\bar{N} = (H\cap M)(H\cap \bar{N})$, or $H\cap TU = (H\cap T)(H\cap U)$. 

For each orbit $Z$, we suppose that $w(B)$, $w(T)$, $w(U)$ are decomposed with respect to $(M,\bar{N})$, and that $w^{-1}(\bar{P})$, $w^{-1}(M)$, and $w^{-1}(\bar{N})$ are decomposed with respect to $(T,U)$.
\end{enumerate}

Under these assumptions, we define a functor $\Phi_Z:\cC(T)\to \cC(M)$ associated to any orbit $Z$. Consider the condition:
\begin{itemize}
\item[($\bigstar$)] The characters $w(\psi):=\psi(\bar{w}^{-1}(\ \cdot \ ) \bar{w})$ and $\eta$ coincide on $w(U)\cap \bar{N}$
\end{itemize}
If ($\bigstar$) doesn't hold, $\Phi_Z:=0$. If $(\bigstar)$ holds, define $\Phi_Z$ as follows. Set
\begin{align*}
T' &= T\cap w^{-1}(M); & M' &= w(T)\cap M = w(T')\\
U' &= M\cap w(U); & \bar{N}' &= T\cap w^{-1}(\bar{N})\\
\psi' &= w(\psi)|_{U'}; & \eta' &= w^{-1}(\eta)|_{\bar{N}'}.
\end{align*}
Define $\Phi_Z$ by
$$\Phi_Z:= i_{U', \psi'}\circ \varepsilon_2\circ w \circ \varepsilon_1 \circ r_{\bar{N}',\eta'}: \cC(T)\to \cC(M),$$ where $\varepsilon_1$ denotes twisting by $\delta_U^{1/2}\delta_{U\cap w^{-1}(\bar{P})}^{-1/2}$ and $\varepsilon_2$ denotes twisting by $\delta_{\bar{N}}^{1/2}\delta_{\bar{N}\cap w(B)}^{-1/2}$.

\begin{thm}[\S 5.2, \cite{b-zI}]\label{inductionrestriction}
Under the conditions (1)-(4), the functor $$F = r_{\bar{N},\eta}\circ i_{U,\psi}:\cC(T)\to \cC(M)$$ is glued from the functors $\Phi_Z$ where $Z$ runs through the $\bar{P}$-orbits on $X$. More precisely, there is a filtration $0=F_0\subset \dots \subset F_k = F$ such that $F_i/F_{i-1} \cong \Phi_{Z_i}$.
\end{thm}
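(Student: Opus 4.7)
The plan is to follow the proof of the original Bernstein--Zelevinsky geometric lemma in \cite[\S 5.2]{b-zI}, carefully noting that every argument there is compatible with an arbitrary $W(k)$-algebra $R$ under the standing assumption that $G$ has a compact open subgroup of pro-order invertible in $W(k)$. The argument is entirely geometric in nature once induction and restriction have been defined, so the proof essentially reduces to bookkeeping.

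First I would set up the geometric picture. Realize the functor $i_{U,\psi}$ concretely: for $V\in \cC(T)$, the space $i_{U,\psi}(V)$ consists of $R$-valued functions on $G$ that transform on the left by the appropriate product of $\psi$ and a modulus twist of the $T$-action on $V$, and are compactly supported modulo $B$; in particular, $i_{U,\psi}(V)$ is naturally a sheaf of $R$-modules on $X = B\backslash G$ equipped with a right $G$-action. Under assumption (3), the $\bar{P}$-orbits $Z_1,\dots,Z_k$ can be ordered so that $Y_i := Z_1\cup\cdots\cup Z_i$ is open in $X$ for each $i$. Let $F_i \subset i_{U,\psi}$ denote the subfunctor consisting of sections supported in $Y_i$; by the topological nature of the definition, extension of scalars commutes with taking supports, so each $F_i$ is a subfunctor of $\cC(T) \to \cC(G)$ in the sense of the category $\cC$.

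Next I would apply $r_{\bar{N},\eta}$ to the filtration $0 = F_0 \subset F_1 \subset \cdots \subset F_k = i_{U,\psi}$ and identify the successive subquotients. Since restriction-to-open and extension-by-zero from the complementary closed stratum are exact, and $r_{\bar{N},\eta}$ is exact (being coinvariants for a twisted action of a union-of-compact-open subgroup, which is exact on smooth modules because $\bar{N}$ is a union of compact open subgroups with pro-order invertible in $W(k)$ by assumption (2)), one obtains a filtration of $r_{\bar{N},\eta}\circ i_{U,\psi}$ whose $i$-th subquotient is $r_{\bar{N},\eta}$ applied to the sections supported on the locally closed orbit $Z_i$. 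For each $i$, pick the representative $\bar{w}$ of $Z_i$, and use the decomposition assumptions (4) together with the action of $\bar{P}$ on $Z_i$ to write the sections supported on $Z_i$ as a compact induction from the stabilizer of $B\bar{w}^{-1}$ in $\bar{P}$ of a twisted representation built from $V$. This is a purely group-theoretic computation using the decompositions of $w(B), w(T), w(U)$ with respect to $(M,\bar{N})$, and of $w^{-1}(\bar{P}), w^{-1}(M), w^{-1}(\bar{N})$ with respect to $(T,U)$.

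Applying $r_{\bar{N},\eta}$ to this compact induction and unwinding the definitions then produces either $\Phi_{Z_i}(V)$, if the characters $w(\psi)|_{w(U)\cap\bar{N}}$ and $\eta|_{w(U)\cap\bar{N}}$ agree (condition $(\bigstar)$), or zero otherwise: in the latter case, the $(\bar{N},\eta)$-coinvariants of a module on which a unipotent subgroup acts through a nontrivial character must vanish. The modulus factors $\varepsilon_1, \varepsilon_2$ are exactly what is required to reconcile the left-invariant Haar measures on the various subgroups involved when transferring between the compactly-induced description and the final $i_{U',\psi'}\circ(-)\circ r_{\bar{N}',\eta'}$ description; they take values in $W(k)^\times$ because $W(k)$ contains the appropriate roots of the relevant indices (again by invertibility of the pro-orders of compact opens).

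The only real obstacle is verifying, at each step, that the integration and Mackey manipulations needed in \cite[\S 5.2]{b-zI} make sense over a general $W(k)$-algebra. This reduces to checking that (i) all indices $[H:H']$ for compact open subgroups arising in the orbit analysis are invertible in $W(k)$, (ii) the modulus characters $\delta_U, \delta_{\bar{N}}, \delta_{w(B)\cap\bar{N}}, \delta_{U\cap w^{-1}(\bar{P})}$ are $W(k)^\times$-valued, and (iii) $i_{U,\psi}$, $r_{\bar{N},\eta}$, compact induction, and twisting commute with base change $R\to R'$. Each of (i)--(iii) follows from the assumptions in (1)--(4) together with the hypothesis that $G$ has a compact open subgroup with pro-order invertible in $W(k)$, so the rest of the argument is a verbatim transcription of Bernstein--Zelevinsky's proof.
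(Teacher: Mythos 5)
Your overall strategy is the same as the paper's: rerun the Bernstein--Zelevinsky geometric lemma over a general $W(k)$-algebra $R$, checking that the orbit filtration, exactness of the twisted Jacquet functor, and the Mackey-type identifications of the subquotients all survive base change. Your checklist (invertibility of indices of compact opens, unit-valued modulus characters, compatibility of the functors with $R\to R'$) covers most of what needs verifying, and for those points the argument really is a verbatim transcription.

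However, there is one genuine gap, and it is precisely the step the paper singles out as the only place where the original proof does \emph{not} go through verbatim. The chain of reductions in \cite[\S 5]{b-zI} that identifies the subquotient supported on an orbit with $\Phi_{Z_i}$ bottoms out in the special case $U=\{1\}$, $G=\bar{P}$, $M\subset T$ (case IV$_2$ of \emph{loc.\ cit.}), where one must show that a natural transformation
$A\colon r_{\bar{N},\eta}(i_{\{1\},1}(\rho)) \to \varepsilon_2\, r_{\bar{N}',\eta}(\rho)$,
defined by an integral, is an isomorphism. Bernstein and Zelevinsky reduce to $\rho = C_c^{\infty}(\bar{N}',\CC)$, where source and target are one-dimensional (spaces of Haar integrals), and conclude by observing that $A$ is \emph{nonzero}. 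Over a general ring $R$ both sides are still free of rank one (by existence and uniqueness of Haar integrals with values in $R$), but a nonzero map between free rank-one $R$-modules need not be surjective --- it could be multiplication by a non-unit. So "nonzero, hence iso" is a non sequitur over $R$, and your assertion that the identification of subquotients is "purely group-theoretic" and "a verbatim transcription" skips exactly this point. The fix is to exhibit an element mapping to a generator: taking $f$ to be $[K:K\cap\bar{N}']^{-1}$ times the characteristic function of $K\cap\bar{N}'$ for a compact open $K\subset\bar{N}$ of invertible pro-order, one computes $\overline{A}(f)$ maps to $1$ under the normalization sending the characteristic function of $K\cap\bar{N}'$ to $1$, so $A$ is surjective and hence an isomorphism. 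Your proposal has all the ingredients needed for this repair (you already insist on invertibility of the relevant indices), but as written it does not identify the step that fails, so the proof is incomplete at the critical point.
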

\begin{proof}
With only one exception, the argument in \cite[\S 5]{b-zI} involves the geometry of the group $G$, and goes through verbatim after replacing $\mathbb{C}$ with an arbitrary $W(k)$-algebra $R$. The exception is the following special case, which is case IV$_2$ in \cite{b-zI}: $U=\{1\}$, $G=\bar{P}$, $M\subset T$, $\rho\in \Rep_R(T)$. The map $p_{\bar{N}'}:\rho \to r_{\bar{N}',\eta}(\rho)$ is the canonical projection, and the morphism
\begin{align*}
\overline{A}: i_{\{1\},1}(\rho) & \to r_{\bar{N}',\eta}(\rho)\\
f &\mapsto \int_{\bar{N}'\backslash \bar{N}}\eta^{-1}(v)p(f(v))dv,
\end{align*}
induces a morphism of $R$-modules
\begin{align*}
A: r_{\bar{N},\eta}(i_{\{1\},1}(\rho)) \to \varepsilon_2r_{\bar{N}',\eta}(\rho) =:\Phi_Z(\rho).
\end{align*}
From the functoriality of $r_{\bar{N}',\eta}$ and its compatibility with extension of scalars, $\overline{A}$ and $A$ in fact define natural transformations of functors.

It remains to check that $A$ is an isomorphism. Following \emph{loc. cit.}, we may reduce to the case $M=\{1\}$, $T=\bar{N}'$, $\eta=1$, and $\rho$ is the regular representation of $\bar{N}'$ on the space of $R$-valued locally constant compactly support functions, $C_c^{\infty}(\bar{N}',R)$. Then by transitivity of induction, $i_{\{1\},1}(\rho)$ is the regular representation of $\bar{N}$ on the space $C_c^{\infty}(\bar{N},R)$. A Haar integral on $\bar{N}$ is a morphism $C_c^{\infty}(\bar{N},R) \to R$ factoring through $r_{\bar{N},1}(C_c^{\infty}(\bar{N},R))$, and likewise for $\bar{N}'$. Thus existence and uniqueness of Haar integrals (\cite[I.2.4]{vig}) implies $r_{\bar{N}',1}(C_c^{\infty}(\bar{N}',R))$ and $r_{\bar{N},1}(C_c^{\infty}(\bar{N},R))$ are free of rank one over $R$. To check $A$ is an isomorphism we need to show it is surjective (not just nonzero, as in \cite{b-zI}). Let $K$ be a compact open subgroup of $\bar{N}$ with pro-order invertible in $W(k)$, let $g$ in $C_c^{\infty}(\bar{N}',R)$ be the characteristic function of $K\cap \bar{N}'$. Then the Haar integral sending $g$ to $1$ defines an isomorphism $r_{\bar{N}',1}(C_c^{\infty}(\bar{N}',R))\cong R$ that sends $p(g)$ to $1$. If $f\in C_c^{\infty}(\bar{N},R)$ is $[K:K\cap \bar{N}']^{-1}$ times the characteristic function of $K\cap \bar{N}'$ then $\overline{A}(f)=\int_{\bar{N}'\backslash \bar{N}}p([K:K\cap \bar{N}']^{-1}\cdot g)$, which is sent to $1$ in $R$. Hence $A$ is surjective.
\end{proof}

\subsection{An argument of Bushnell and Henniart}

We now return to the setting of the rest of the paper. Let $P=MN$ be a standard parabolic subgroup of $G=GL_n(F)$, with Levi component $M$ and unipotent radical $N$. If $R$ is a $W(k)$-algebra, let $r_P^G:\cC(G)\to \cC(M)$ be the un-normalized parabolic restriction functor. If we let $U_M:=U\cap M$, then $\psi_R|_{U_M}$ is again a nontrivial character and we can form the representation $\Ind_{U_M}^M\psi_R$, and the compact induction $\cInd_{U_M}^M\psi_R$, respectively. Given an object $(R,V)$ in $\cC(\{1\})$, it defines an object of $\cC(U)$ via the action of $\psi$, which we denote $\psi_{V}$. In this way we define functors $\Ind_U^G\psi$, $r_P^G(\Ind_U^G\psi)$, and $\Ind_{U_M}^M\psi$, respectively, with source $\cC(\{1\})$ and target $\cC(G)$, $\cC(M)$, and $\cC(M)$, respectively. 

In the notation of Subsection~\ref{subsection:composition}, we let $T=\{1\}$ and $B=U$. We will take $\bar{P}=M\bar{N}$ to be the $M$-opposite parabolic to $P$, and $\eta$ to be the trivial character.

We start by recording a lemma appearing in \cite{bh_whitt}, whose proof works verbatim over an arbitrary $W(k)$-module $R$.
\begin{lemma}[Lemma 2.3, \cite{bh_whitt}]\label{bhlem}
Let $\bar{P} = M\bar{N}$ be the opposite parabolic to $P$ and let $R$ be any $W(k)$-module. If the set $Ug\bar{P}$ supports a function $f:G\to R$ such that $f(uxn) =\psi(u)f(x)$ for $u\in U$, $x\in G$, $n\in \bar{N}$, then $g$ lies in $U\bar{P}$.
\end{lemma}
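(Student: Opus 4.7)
The plan is to prove the contrapositive: if $g \notin U\bar P$, then no nonzero $f: G \to R$ supported on $Ug\bar P$ and satisfying $f(uxn) = \psi(u)f(x)$ can exist. The argument follows Bushnell--Henniart's original proof, with one arithmetic input needed to handle a general $W(k)$-module $R$.

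I would begin by choosing $x_0 = u_0 g m_0 n_0 \in Ug\bar P$ with $f(x_0) \neq 0$. Since $f(x_0) = \psi(u_0) f(gm_0)$ and $m_0 \in \bar P$ gives $U(gm_0)\bar P = Ug\bar P$, I may replace $g$ by $gm_0$ and assume $f(g) \neq 0$. For every $v = gn^{-1}g^{-1}$ in $\Gamma := U \cap g\bar Ng^{-1}$ the identity $vgn = g$ forces $(\psi(v) - 1) f(g) = 0$. The crucial observation is that $\psi$ takes values in the $p$-power roots of unity in $W(k)^\times$ (it is the Teichm\"uller lift of a character of $F$); since $\ell \neq p$, any nontrivial such root of unity $\zeta \in W(k)$ is distinct from $1$ modulo $\ell$, so $\zeta - 1$ is a unit in $W(k)$. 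Thus $\psi(v) - 1$ is either $0$ or a unit in $R$, and combined with $f(g) \neq 0$ this forces $\psi|_\Gamma \equiv 1$.

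What remains is purely group-theoretic: $\psi|_\Gamma \equiv 1$ implies $g \in U\bar P$. Writing $g = tuw\bar p$ via the parabolic Bruhat decomposition $G = \bigsqcup_{w \in W_G/W_M} Bw\bar P$, and using that $T \subset M \subset \bar P$, that $\bar p$ normalizes $\bar N$, and that $B$ normalizes $U$, one finds $\Gamma = (tu)(U \cap w\bar Nw^{-1})(tu)^{-1}$, and that $g \in U\bar P$ is equivalent to $w \in W_M$. Conjugation by $tu$ replaces $\psi$ with another generic character on $U$, still nontrivial on every simple root subgroup (torus conjugation merely rescales the additive character on each $U_{\alpha_i}$). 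So the problem reduces to: for $w \notin W_M$, exhibit a simple root subgroup $U_{(k,k+1)}$ inside $U \cap w\bar Nw^{-1}$. In $GL_n$ this amounts to finding a descent in the block-index sequence $b(w^{-1}(1)), \ldots, b(w^{-1}(n))$ (where $b(i)$ denotes the Levi block containing $i$); since the only non-decreasing sequence with the correct multiplicities corresponds to $w^{-1} \in W_M$, any $w \notin W_M$ automatically produces such a descent, and the descent position $k$ exhibits the desired simple root subgroup.

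The main and only novel obstacle is the rigidity step: guaranteeing $\psi(v) - 1$ is a unit in $R$ whenever $\psi(v) \neq 1$. This is the unique place the $W(k)$-module hypothesis on $R$ is used, and it succeeds because the values of $\psi$ are $p$-power roots of unity and $p$ is a unit in $W(k)$. The Bruhat reduction and descent combinatorics are otherwise classical for $GL_n$.
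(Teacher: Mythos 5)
Your proof is correct and is essentially the Bushnell--Henniart argument that the paper simply cites as working ``verbatim'' over a $W(k)$-module: reduce to $f(g)\neq 0$, deduce $(\psi(v)-1)f(g)=0$ for $v\in U\cap g\bar{N}g^{-1}$, and then run the Bruhat/descent combinatorics to force $g\in U\bar{P}$. You also correctly isolate the one point where the coefficient ring actually enters --- that $\psi(v)-1$ is a unit of $W(k)$ whenever $\psi(v)\neq 1$, since the values of $\psi$ are $p$-power roots of unity and $\ell\neq p$ --- which is exactly the justification implicit in the paper's ``verbatim'' claim.
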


If an orbit $U\bar{w}\bar{P}$ satisfied condition ($\bigstar$) in Subsection~\ref{subsection:composition}, there would exist a well-defined function $f:U\bar{w}\bar{P}\to R$ satisfying the hypothesis of Lemma~\ref{bhlem}. Hence the only orbit satisfying ($\bigstar$) is the trivial orbit $Z=U\bar{P}$. Although we will not need it, we record the following corollary of Theorem~\ref{inductionrestriction}:
\begin{cor}
The functor $r_{\bar{N},1}\circ i_{U,\psi}:\cC(\{1\}) \to \cC(M)$ is equivalent to the functor $\Phi_{U\bar{P}} = i_{U',\psi'}$ where $U'=U\cap M$ and $\psi'=\psi|_{U'}$. In particular, we also get the un-normalized version $r_{\bar{P}}^G\cInd_U^G\psi \cong \cInd_{U_M}^M\psi$.
\end{cor}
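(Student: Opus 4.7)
The plan is to apply Theorem~\ref{inductionrestriction} to the data $T=\{1\}$, $B=U$, $\bar{P}=M\bar{N}$, $\eta = 1$, and then read off which orbit-functors $\Phi_Z$ survive.

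First I would verify hypotheses (1)--(4) of Subsection~\ref{subsection:composition} for $GL_n(F)$ and the standard parabolics. Conditions (1) and (2) are immediate ($U$ and $\bar N$ are unions of compact open subgroups; all the required normalizations and intersections are trivial since $T=\{1\}$). Condition (3) follows from the Bruhat decomposition, which gives finitely many $(U,\bar P)$-double cosets indexed by a set of Weyl representatives modulo $W_M$, and one can order them by dimension so that the unions of smaller-dimensional cells are open. Condition (4), the decomposition requirements for $w(B)=w(U)$, $w(T)=\{1\}$, and for $w^{-1}(\bar P)$, $w^{-1}(M)$, $w^{-1}(\bar N)$ relative to $(T,U)=(\{1\},U)$, reduces to standard matrix computations: after choosing Weyl representatives $\bar w$, all of these groups have product decompositions compatible with the root-subgroup stratification.

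Next I would pin down which orbits contribute. For an orbit $Z = U\bar w\bar P$, condition $(\bigstar)$ demands that $w(\psi)$ and $\eta=1$ agree on $w(U)\cap \bar N$, i.e.\ that $\psi$ is trivial on $U\cap \bar w^{-1}\bar N\bar w$. If this holds, one can construct a nonzero function $f:U\bar w\bar P\to R$ satisfying $f(ux n)=\psi(u)f(x)$ for $u\in U$, $n\in \bar N$; then Lemma~\ref{bhlem} forces $\bar w\in U\bar P$, so the orbit $Z$ coincides with the open cell $U\bar P$. Thus $\Phi_Z=0$ for every orbit other than the trivial one $Z_0:=U\bar P$, and the filtration in Theorem~\ref{inductionrestriction} collapses to the single functor $\Phi_{Z_0}$.

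For the open orbit I would take $\bar w=1$, $w=\mathrm{id}$, and compute the pieces of $\Phi_{Z_0}$ directly from the definitions: $T'=\{1\}$, $M'=\{1\}$, $\bar N'=\{1\}$, $U'=M\cap U = U_M$, and $\psi'=\psi|_{U_M}$. With $T'=M'=\bar N'=\{1\}$, the functors $r_{\bar N',\eta'}$, $w$, and both twisting steps $\varepsilon_1,\varepsilon_2$ are tautologically the identity (any modulus character on the trivial group is $1$), so $\Phi_{Z_0}=i_{U_M,\psi|_{U_M}}$. This gives the normalized statement $r_{\bar N,1}\circ i_{U,\psi}\cong i_{U_M,\psi|_{U_M}}$. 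The un-normalized version $r_{\bar P}^G\cInd_U^G\psi\cong \cInd_{U_M}^M\psi$ then follows by unwinding the modulus twists: restricting from $\Ind$ to $\cInd$ amounts to tracking compact supports through the filtration (each $\Phi_Z$ is built from functors that preserve compact support on the orbit $Z$), and the $\delta_B^{1/2},\delta_P^{1/2}$ factors that distinguish normalized from un-normalized induction cancel in the same way they do over $\mathbb C$ since $W(k)$ contains $\sqrt{q}$.

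The main obstacle I anticipate is bookkeeping: verifying decomposition hypothesis (4) for every Weyl representative $\bar w$ is routine but tedious, and one must be careful that the integral defining the map $A$ in the proof of Theorem~\ref{inductionrestriction} makes sense and is surjective over a general $W(k)$-algebra $R$ (this is why the pro-order-invertible hypothesis on compact opens enters). Beyond these points, the argument is a direct transcription of the Bernstein--Zelevinsky geometric lemma combined with the observation that Lemma~\ref{bhlem} kills all non-open orbits.
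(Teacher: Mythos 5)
Your proposal is correct and follows the same route as the paper: the corollary is recorded there as an immediate consequence of Theorem~\ref{inductionrestriction} together with the observation (via Lemma~\ref{bhlem}) that any orbit $U\bar w\bar P$ satisfying ($\bigstar$) would support a function as in that lemma and hence must be the open cell $U\bar P$, whose functor $\Phi_{U\bar P}$ one computes to be $i_{U_M,\psi|_{U_M}}$ exactly as you do. Your extra bookkeeping on hypotheses (1)--(4) and on the modulus twists is consistent with, and more detailed than, what the paper writes.
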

Note that a $\bar{P}$-orbit $U\bar{w}\bar{P}$ in $U\backslash G$ satisfies condition ($\bigstar$) if and only if the $U$-orbit $\bar{P}\bar{w}^{-1}U$ in $\bar{P}\backslash G$ satisfies the analogue of condition ($\bigstar$) after switching the roles of $(B, T, U, \psi)$ with $(\bar{P}, M, \bar{N},\eta)$. Thus, after switching the roles of $(B, \{1\}, U, \psi)$ with $(\bar{P}, M, \bar{N},\textbf{1})$, it follows from Lemma~\ref{bhlem} that the only orbit satisfying ($\bigstar$) is $\bar{P}U$. We get the following corollary by applying Theorem~\ref{inductionrestriction} with the roles of $(B, \{1\}, U, \psi)$ and $(\bar{P}, M, \bar{N},\textbf{1})$ swapped:
\begin{cor}
The functor $r_{U,\psi}\circ i_{\bar{N},1}: \cC(M)\to \cC(\{1\})$ is equivalent to the functor $\Phi_{\bar{P}U} = r_{U',\psi'}$.  In particular, the un-normalized version is $(i_{\bar{P}}^G(-))_{U,\psi}=(-)_{U',\psi'}$.
\end{cor}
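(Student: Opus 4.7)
My plan is to apply Theorem~\ref{inductionrestriction} directly, with the roles of $(B,\{1\},U,\psi)$ and $(\bar{P},M,\bar{N},\textbf{1})$ swapped, as indicated in the paragraph preceding the statement. First I would verify that the assumptions (1)--(4) of Subsection~\ref{subsection:composition} transfer to this swapped setting: the hypotheses are formally symmetric in $B$ and $\bar{P}$, and the required finiteness of $U$-orbits on $\bar{P}\backslash G$ follows from the Bruhat decomposition and the finiteness of $W_M\backslash W_G$.

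The paragraph preceding the statement has already done the main analytic step of classifying which orbits satisfy $(\bigstar)$ in the swapped setting. Concretely, applying Lemma~\ref{bhlem} to the inverted function $f'(x):=f(x^{-1})$ — which transforms as $f'(\bar{n}xu)=\psi^{-1}(u)f'(x)$ whenever $f$ satisfies the hypothesis of that lemma — shows that any orbit $\bar{P}\bar{w}U$ for which $\psi$ is trivial on $\bar{w}\bar{N}\bar{w}^{-1}\cap U$ must have $\bar{w}\in \bar{P}U$. Thus only the trivial orbit $\bar{P}U$ contributes, and Theorem~\ref{inductionrestriction} yields $r_{U,\psi}\circ i_{\bar{N},1}\cong \Phi_{\bar{P}U}$.

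It remains to read off $\Phi_{\bar{P}U}$ from its definition. Taking the representative $\bar{w}=e$ so that the inner automorphism $w$ is trivial and substituting the swapped roles into the formulas of Subsection~\ref{subsection:composition}, the target-side subgroups $T',U',M'$ appearing in the Theorem all collapse to $\{1\}$, while on the source side one finds $\bar{N}'=M\cap U$ (which is exactly the $U'$ of the corollary statement) and $\eta'=\psi|_{M\cap U}=\psi'$. The modulus twists $\varepsilon_1,\varepsilon_2$ act on representations of $T'$ and $M'$ respectively, both trivial, so they reduce to the identity; likewise $i_{U',\psi'}$ reduces to the identity induction from $\{1\}$ to $\{1\}$. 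Hence $\Phi_{\bar{P}U}$ is exactly the twisted coinvariants functor $V\mapsto V_{U',\psi'}=r_{U',\psi'}V$.

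The un-normalized version $(i_{\bar{P}}^G(-))_{U,\psi}=(-)_{U',\psi'}$ then follows by the same calculation, since every normalization factor is trivial in this swapped computation. The only point requiring care is verifying this collapse of the modulus twists — that the half-powers of $\delta$-characters appearing in the definition of $\Phi_Z$ become trivial because the groups on which they are supposed to act ($T'$ and $M'$) are themselves trivial. I expect this, rather than anything conceptually deeper, to be the main (modest) obstacle in writing out the proof.
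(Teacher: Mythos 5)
Your proposal matches the paper's argument: the corollary is proved there precisely by applying Theorem~\ref{inductionrestriction} with the roles of $(B,\{1\},U,\psi)$ and $(\bar{P},M,\bar{N},\textbf{1})$ swapped, using Lemma~\ref{bhlem} (via the symmetry of condition ($\bigstar$) under inversion of orbit representatives) to see that only the trivial orbit $\bar{P}U$ contributes, and then reading off $\Phi_{\bar{P}U}=r_{U',\psi'}$ with all modulus twists collapsing since $T'$ and $M'$ are trivial. Your transfer of Lemma~\ref{bhlem} by inverting the function rather than by directly noting the equivalence of the two ($\bigstar$) conditions is a cosmetic difference only.
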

Explicitly, given an object $(R,\rho)$ in $\cC(M)$, the isomorphism $r_{U,\psi}\circ i_{\bar{N},1}(\rho)\cong r_{U',\psi'}(\rho)$ is induced by the homomorphism $\overline{A}:i_{\bar{N},1}(\rho)\to r_{U',\psi'}(\rho)$ satisfying the following. If $p_{U',\psi'}: \rho \to r_{U',\psi'}$ denotes the canonical projection, and $f\in i_{\bar{N},1}(\rho)$, then $$\overline{A}(f)=\int_{U\cap \bar{P}\backslash U}\psi^{-1}(u)p_{U',\psi'}(f(u))du.$$

Let $(R',V)$ be any object in $\cC(\{1\})$. Dualizing, we have
\begin{equation}\label{whittfunctional}
\Hom_{\cC(\{1\})}((R,\rho)_{U',\psi'}, (R',V)) \cong \Hom_{\cC(\{1\})}((i_{\bar{P}}^G(R,\rho))_{U,\psi}, (R',V))
\end{equation}
and applying Frobenius reciprocity, we have
\begin{equation}\label{dualiso}
\Hom_{\cC(M)}((R,\rho), \Ind_{U_M}^M\psi_{(R',V)}) \cong \Hom_{\cC(G)}(i_{\bar{P}}^G(R,\rho), \Ind_U^G\psi_{(R',V)}).
\end{equation}

Explicitly, if $\xi\mapsto W_{\xi}$ denotes a morphism $\rho\otimes_RR'\to  \Ind_{U_M}^M\psi_{(R',V)}$ in $\cC(M)$, and $f\mapsto W_f$ denotes the corresponding morphism $i_{\bar{P}}^G(\rho\otimes_RR') \to \Ind_U^G\psi_{(R',V)}$, then for $f\in i_{\bar{P}}^G(\rho\otimes R')$, $W_f$ restricts on $M$ to the function:
\begin{equation}\label{explicit:whitt}
W_f(m) =\int_{U\cap M\backslash U}\psi^{-1}(u)W_{f(u)}(m)du.
\end{equation}

Finally, we apply Bernstein's second adjointness theorem, which holds when $p$ is invertible in $R$, and when $G$ is the $F$-points of a general linear group (or a classical group with $p \neq 2$, or a reductive group of relative rank 1 over $F$) by \cite[Th\'{e}or\`{e}me 1.5]{dat_finitude}. The second adjointness property gives
\begin{equation}\label{secondadjointness}
\Hom_{\cC(G)}(i_{\bar{P}}^G(R,\rho), \Ind_U^G\psi_{(R',V)}) \cong \Hom_{\cC(M)}((R,\rho), r_P^G\Ind_{U}^G\psi_{(R',V)}).
\end{equation}
Combining this with Equation~(\ref{dualiso}), we have, for each $(R',V)$ in $\cC(\{1\})$, an isomorphism of functors
$$\Hom_{\cC(M)}(-,\Ind_{U_M}^M\psi_{(R',V)})\cong \Hom_{\cC(M)}(-, r_P^G\Ind_{U}^G\psi_{(R',V)}).$$ Yoneda's lemma gives a natural isomorphism of functors
$$\Ind_{U_M}^M\psi\cong  r_P^G\Ind_{U}^G\psi.$$ We summarize this result in the following proposition.

\begin{prop}\label{philemma1}
Consider $\Ind_U^G\psi$, $\Ind_{U_M}^M\psi$, and $r_P^G(\Ind_U^G\psi)$ as functors $$\cC(\{1\})\to \cC(M).$$ There exists a surjective natural transformation $\Phi':\Ind_U^G\psi\to \Ind_{U_M}^M\psi$, which induces an isomorphism $$r_P^G(\Ind_U^G\psi) \cong \Ind_{U_M}^M\psi.$$ In particular, for $(R,V)$ and $(R',V')$ in $\cC(\{1\})$ and a morphism $\lambda:R\to R'$, $\iota:V\otimes_RR'\to V'$ between them, the following diagram commutes:
\begin{equation*}
\xymatrix{
\Ind_U^G\psi_{V} \ar[r]^{\Phi'_{(R,V)}} \ar[d]^{\iota_*}& \Ind_{U_M}^M\psi_{V}\ar[d]^{\iota_*}\\
\Ind_U^G\psi_{V'} \ar[r]^{\Phi'_{(R',V')}} & \Ind_{U_M}^M\psi_{V'},
}
\end{equation*}
where $\iota_*$ denotes pushforward of Whittaker functions along $V\to V\otimes_RR' \xrightarrow{\iota} V'$.
\end{prop}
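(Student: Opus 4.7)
The plan is to summarize the chain of natural isomorphisms assembled in this subsection. Fix an object $(R,V)$ in $\cC(\{1\})$. Testing with an arbitrary $(R,\rho)$ in $\cC(M)$, the combination of the Frobenius reciprocity isomorphism (\ref{dualiso}) with Bernstein's second adjointness (\ref{secondadjointness}) produces a natural isomorphism
\[
\Hom_{\cC(M)}\bigl((R,\rho),\,\Ind_{U_M}^M\psi_V\bigr) \;\cong\; \Hom_{\cC(M)}\bigl((R,\rho),\,r_P^G\Ind_U^G\psi_V\bigr),
\]
and Yoneda's lemma then yields a canonical isomorphism $\alpha_V : \Ind_{U_M}^M\psi_V \isomto r_P^G\Ind_U^G\psi_V$. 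This is exactly the isomorphism $r_P^G(\Ind_U^G\psi)\cong \Ind_{U_M}^M\psi$ claimed in the statement.

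Next I would define $\Phi'_V : \Ind_U^G\psi_V \to \Ind_{U_M}^M\psi_V$ as the composition $\alpha_V^{-1}\circ p_N$, where $p_N$ is the canonical quotient map to the Jacquet module $r_P^G\Ind_U^G\psi_V$. By construction $\Phi'_V$ is $M$-equivariant, factors through $p_N$, and the induced map on the Jacquet module is $\alpha_V^{-1}$, which is an isomorphism. Surjectivity of $\Phi'_V$ is immediate from the surjectivity of $p_N$.

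The remaining task is to verify the commutativity of the square, which amounts to checking naturality of $\Phi'$ in $(R,V)$, or equivalently naturality in $V$ of each of the three ingredients: the Frobenius adjunction (\ref{dualiso}), the second adjointness isomorphism (\ref{secondadjointness}), and the Jacquet quotient $p_N$. The adjunction (\ref{dualiso}) is natural in both variables by the general category-theoretic nature of Frobenius reciprocity; second adjointness (\ref{secondadjointness}) is natural because $r_P^G$ and $i_{\bar{P}}^G$ are functors on $\cC(G)$ and $\cC(M)$ that commute with extension of scalars; and $p_N$ is natural by functoriality of the Jacquet functor and its compatibility with extension of scalars noted at the beginning of Subsection~\ref{jacquetmoduleofunivmodule}. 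Chasing these through gives the commuting square.

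The main obstacle is purely bookkeeping: one has to check that the arrows marked $\iota_*$ in the square — pushforward of Whittaker functions along $V\to V\otimes_RR'\xrightarrow{\iota}V'$ — really do coincide with the morphisms induced by $(\lambda,\iota)$ under the natural structure of each adjunction. This is essentially forced by the explicit formula (\ref{explicit:whitt}) together with the fact that the isomorphism $\overline{A}$ produced in the proof of Theorem~\ref{inductionrestriction} is compatible with extension of scalars, so no genuinely new computation is required once the preceding apparatus is in place.
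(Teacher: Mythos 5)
Your proposal is correct and follows essentially the same route as the paper: the paper also obtains the isomorphism $r_P^G(\Ind_U^G\psi)\cong\Ind_{U_M}^M\psi$ by combining the restriction--induction filtration (reduced to the single open orbit via Lemma~\ref{bhlem}), the dualization (\ref{dualiso}), second adjointness (\ref{secondadjointness}), and Yoneda, and defines $\Phi'$ as the Jacquet quotient followed by the inverse of that isomorphism, with naturality in $(R,V)$ coming from the compatibility of all ingredients with extension of scalars.
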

We will only ever use Proposition~\ref{philemma1} in the special case where $V=R$ and $V'=R'$, at which point we will abbreviate the morphism $\Phi_{(R,R)}'$ by simply $\Phi_R'$.

We note that the isomorphism $r_P^G(\Ind_U^G\psi) \cong \Ind_{U_M}^M\psi$ depends on the choice of Haar measure in the map $\overline{A}$. Unfortunately, we are currently unable to write down a more explicit description of the map $\Phi'$. However, its existence, naturality, and the useful properties of the next subsection fill in everything we will need.

\section{Hecke algebras}
\subsection{Iwahori--Hecke algebras} We first summarize some well-known facts about Iwahori--Hecke algebras that we will need. The standard Iwahori subgroup $I$ consists of the matrices in $K$ whose reduction modulo $(\varpi)$ is upper triangular. Define $$\Lambda = T/T(\cO_F) = \left\{\left(\begin{matrix}\varpi^{\mu_1}\\
&\ddots \\ && \varpi^{\mu_n} \end{matrix} \right): \mu_1,\dots,\mu_n\in \ZZ\right\}.$$ The Satake map induces an isomorphism
\begin{align}\label{satake}
S: \cZ & \isomto W(k)[\Lambda]^{W_G},\\
f &\mapsto \left[t\mapsto \delta_B(t)^{-1/2}\int_Uf(tu)du\right]\nonumber
\end{align}
where $W_G$ denotes the Weyl group of $G$, and the algebra structure on the left is convolution of Hecke operators. The Haar measure $du$ is normalized so that the measure of $U\cap K$ is 1.

The module $W(k)[I\backslash G]$ of finitely supported functions on $I\backslash G$ has an action of $G$ by right translation and its $K$-fixed vectors, $W(k)[I\backslash G]^K = W(k)[I\backslash G/K]$ carry a Hecke action of $\cZ$. The quotient map $I\backslash G \to K\backslash G$ induces an inclusion
$$W(k)[K\backslash G] \hookrightarrow W(k)[I\backslash G],$$ which is $(\cZ,*)$-equivariant on $K$-fixed vectors. By \cite[Prop 1.12]{lazarus}, the Satake isomorphism extends to an isomorphism of $(\cZ,*)$-modules:
\begin{equation}\label{satake-kato}
W(k)[I\backslash G/K] \isomto W(k)[\Lambda].
\end{equation}

The Iwahori Hecke algebra $\cH(G,I)=W(k)[I\backslash G/I]$ admits a presentation due to Iwahori--Matsumoto. Let $\{s_1,\dots, s_{n-1}\}$ denote the transposition matrices, and let $t$ denote the matrix
$$t = \left(\begin{matrix}&1\\
&&\ddots\\
&&&1\\
\varpi
\end{matrix}
 \right).$$ The Iwahori double cosets in $I\backslash G/I$ can be uniquely represented as $t^aw$ where $w$ is product of $s_i$'s and their inverses. Let $S_i:=[Is_iI]$ and $T=[ItI]$ in $\cH(G,I)$, where $[-]$ denotes the characteristic function. Then $\cH(G,I)$ is generated as an algebra by $\{S_1,\dots, S_{n-1}\}\cup \{T,T^{-1}\}$, subject to the usual braid relations and quadratic relations, along with the relations $TS_i=S_{i-1}T$ and $T^2S_1 = S_{n-1}T^2$ (c.f. \cite[I.3.14]{vig}). The subalgebra of $\cH(G,I)$ generated by $\{S_1,\dots, S_{n-1}\}$ is precisely $W(k)[I\backslash K/I]$. We denote this subalgebra $\cH_W$. We let $\textbf{1}:\cH_W\to W(k)^{\times}$ denote the ``trivial'' character $w\mapsto [IwI:I]$.

$\cH(G,I)$ admits another presentation due to Bernstein. Let $$t_j:=\left(\begin{matrix}\varpi I_j&\\&I_{n-j} \end{matrix}\right),\ \ 0\leq j \leq n,$$ and let $T_j = [It_jI]\in \cH(G,I)$. Then $T_j$ is invertible in $\cH(G,I)$, we can define $X_j:=q^{j-(n+1)/2}T_jT_{j-1}^{-1}$, and $\cH(G,I)$ is generated by $\{S_1,\dots, S_{n-1}\}\cup \{X_1^{\pm 1},\dots, X_n^{\pm 1}\}$ subject to the relations
\begin{enumerate}
\item $W(k)[X_1^{\pm 1},\dots, X_n^{\pm 1}]$ is commutative,
\item $X_jS_i = S_iX_j$ except if $j=i$ or $i+1$,
\item $S_iX_{i+1}S_i=qX_i$
\item $X_iS_i = S_iX_{i+1} + (q-1)X_i$
\item $X_{i+1}S_i = S_iX_i - (q-1)X_i$
\item the quadratic relations and braid relations on the $S_i$'s.
\end{enumerate}
If $\mu_1\geq \cdots \geq \mu_n$ is a dominant weight, and $\varpi^{\mu}:=\diag(\varpi^{\mu_1},\dots,\varpi^{\mu_n})\in \Lambda$, then $[I\varpi^{\mu}I]$ is invertible in $\cH(G,I)$. If $\mu$ is non-dominant, it can be written $\mu'-\mu''$ for $\mu',\mu''$ dominant, and the product $[I\varpi^{\mu'}I]*[I\varpi^{\mu''}I]^{-1}$ depends only on $\mu$. This defines an injective map of algebras
\begin{align*}
W(k)[\Lambda]&\hookrightarrow \cH(G,I)\\
\varpi^{\mu}&\mapsto q^{-l(\varpi^{\mu})/2}[I\varpi^{\mu'}I]*[I\varpi^{\mu''}I]^{-1},
\end{align*}
where $l(\varpi^{\mu})=l(w)$, after writing $\varpi^{\mu}=t^aw$ for $w\in W_G$, and $l(w)$ denotes the length of $w$. Note that if $\varpi^{\mu_j}:=\diag(1,\dots, 1,\varpi,1,\dots,1)$, with $\varpi$ in the $j$'th position, this map sends $\varpi^{\mu_j}\mapsto X_j$ (despite this map, note that $X_j$ is not $[I\varpi^{\mu_j}I]$). The image of the algebra $W(k)[\Lambda]$ is the subring $\cA:= W(k)[X_1^{\pm1},\dots,X_n^{\pm1}]$ of $\cH(G,I)$, which is a Laurent polynomial ring. It identifies the center of $\cH(G,I)$ with the image of the subring $W(k)[\Lambda]^{W_G}$, i.e. the Laurent polynomials in $\cA$ invariant under permuting variables (\cite[Lemme I.3.15]{vig}).

We identify the ring $\cZ:=W(k)[K\backslash G/K]$ and its module $W(k)[K\backslash G/I]$ with the ring $W(k)[\Lambda]^{W_G}$ and its module $W(k)[\Lambda]$ via the Satake isomorphisms of Equations~(\ref{satake}) and (\ref{satake-kato}). By the Bernstein presentation, we can and do consider them as subalgebras, $\cZ$ and $\cA$, of $\cH(G,I)$.

Let $P=MN$ be a standard parabolic subgroup. It is the set of block-upper triangular matrices associated to an ordered partition $n=n_1+\cdots +n_r$, and $M\cong GL_{n_i}(F)\times\cdots \times GL_{n_r}(F)$. We have, for each factor $G_i:=GL_{n_i}(F)$, the Bernstein presentation of its Iwahori--Hecke algebra $\cH(G_i,I_i)$, with generators that we will denote $X^{(i)}_1,\dots, X^{(i)}_{n_i}$, $S^{(i)}_1,\dots, S^{(i)}_{n_i-1}$. Let $I_M:=I\cap M$. There is a unique algebra homomorphism \begin{align*}
j:=j_P^G:\cH(M,I_M)&\to \cH(G,I)\\
X_j^{(i)}&\mapsto X_{n_1+\cdots + n_{i-1} + j}\\
S_j^{(i)}&\mapsto S_{n_1+\cdots + n_{i-1} +j}
\end{align*}
If $W_M$ denotes the Weyl group of $M$, considered as a subgroup of $W_G$, then it follows (\cite[I.3]{mw_zel}) from the presentation that
$$\cH(G,I) = \bigoplus_{w\in W_M\backslash W_G}j_P^G(\cH(M, I_M))*[IwI].$$ We identify $\cZ_M:=W(k)[K_M\backslash M/K_M]$ with $W(k)[\Lambda]^{W_M}$ via Satake, and with the center of $\cH(M,I_M)$ via the Bernstein presentation. Each $j:=j_P^G$ gives an embedding $\cZ\hookrightarrow \cZ_M\hookrightarrow \cH(G,I)$ which agrees with the Bernstein map $\cZ\hookrightarrow \cH(G,I)$.

%There are quotient maps $I\backslash G/I \to I\backslash G/K \to K\backslash G/K$. These induce inclusions of $W(k)$-modules:
%$$W(k)[K\backslash G/K]\hookrightarrow W(k)[I\backslash G/K]\hookrightarrow W(k)[I\backslash G/I].$$ In the second map, the characteristic function $[IgK]$ is sent to $\sum_{h}[IghI]$, where the sum is over the elements of $K/I$ such that $IgK = \bigsqcup_h IghI$, and the composition sends $[KgK]$ to $\sum_{h,h'}[Ih'ghI]$ where $KgK = \bigsqcup_{h,h'}Ih'ghI$. Since elements of $W(k)[\Lambda]^{W_G}$ are a set of representatives for $I\backslash K/backslash I$-orbits, 
%
%The image of $W(k)[I\backslash G/K]\hookrightarrow W(k)[I\backslash G/I]$ is the submodule of invariants for the action of $K/I$ by right translation, and the image of $W(k)[K\backslash G/K]\hookrightarrow W(k)[I\backslash G/I]$ is generated by $\sum_{g\in I\backslash K/I}g*f$ for $f$ in the image of $W(k)[I\backslash G/K]$.
%
%
%The $I$-fixed vectors $W(k)[I\backslash G]^I$ forms a free $\cH(G,I)$-module of rank $1$, with the isomorphism $\cH(G,I)\isomto W(k)[I\backslash G]^I$ given by $[IgI]\mapsto [IgI]*[I]$. The $K$-fixed vectors are $W(k)[I\backslash G]^K$, which correspond to the 

If $I_M$ denotes $I\cap M$, the natural map $V\to r_P^GV$ induces, in characteristic zero or banal characteristic, an isomorphism of $\cH(M,I_M)$-modules $V^I\cong (r_P^GV)^{I_M}$, showing that the analogue of $r_P^G$ for Iwahori--Hecke modules is restriction of scalars from $\cH(G,I)$ to $\cH(M,I_M)$. Actually, this is also true if $R$ is a commutative ring with $q\in R^{\times}$:

\begin{lemma}\label{iwahoriisom}
Let $G$ be a general linear group or a classical group (with $p\neq 2$), and suppose $q$ is invertible in $R$. If $I$ is the Iwahori subgroup of $K$, and $I_M:=I\cap M$, then for any $R[G]$-module $V$, the quotient map $p_N:V\to r_P^G(V)$ induces an isomorphism $$V^I \isomto (r_P^GV)^{I_M}.$$
\end{lemma}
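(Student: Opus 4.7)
The claim is the classical Borel--Casselman isomorphism $V^I \cong (r_P^G V)^{I_M}$, and the plan is to adapt the standard argument to the coefficient ring $R$ with $q \in R^\times$. Since $q = p^f$, this forces $p \in R^\times$, and hence the Haar measure of every compact open subgroup of $N$ or $\bar N$ (each a pro-$p$ group) is a unit in $R$; this enables the usual averaging constructions in these directions. The two structural ingredients are the Iwahori factorization
$$I \;=\; (I \cap \bar N)\cdot I_M \cdot (I \cap N),$$
and a \emph{strongly positive} element $z \in Z(M)$, meaning $z^{-1}(I \cap N)z \supsetneq I \cap N$ with $\bigcup_{k \geq 0} z^{-k}(I \cap N)z^k = N$. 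For $G = GL_n$ and standard Levi $M \cong \prod_i GL_{n_i}$, take $z = \diag(\varpi^{c_1}I_{n_1},\ldots, \varpi^{c_r}I_{n_r})$ with $c_1 > \cdots > c_r$; the classical groups admit analogous elements.

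\textbf{Injectivity.} Let $v \in V^I$ satisfy $p_N(v) = 0$. By smoothness $v \in V(N_0)$ for some compact open $N_0 \supseteq I \cap N$. Set $w := z^{-k}v$ with $k$ large enough that $z^{-k}(I \cap N)z^k \supseteq N_0$; then the Iwahori factorization of $z^{-k}Iz^k$ together with $z \in Z(M)$ shows that $w \in V^{N_0 \cdot I_M}$, while $p_N(w) = z^{-k}p_N(v) = 0$ gives $w \in V(N)$. The classical Casselman vanishing argument then applies, combining the $N_0 \cdot I_M$-invariance of $w$ with the coinvariance relation $e_{N_0'}(w) = 0$ (for some $N_0' \supseteq N_0$) to reduce to a scalar relation with unit coefficient (a power of $p$), forcing $w = 0$. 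Since $z^{-k}$ acts invertibly on $V$, $v = 0$.

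\textbf{Surjectivity and main obstacle.} Given $\bar v \in (r_P^G V)^{I_M}$, lift to $v' \in V$ and average over $I \cap N$ (whose measure is a $q$-power) to obtain $v_1 \in V^{I \cap N}$ with $p_N(v_1) = \bar v$; this uses that $N$ acts trivially on $r_P^G V$. The hard step is to further correct $v_1$ modulo $V(N) \cap V^{I \cap N}$ to an $I$-invariant vector, and this is the main obstacle: the naive approach of averaging over all of $I$ is obstructed because $\mu(I_M)$ contains a factor divisible by $(q-1)$, which need not be a unit in $R$. I plan to bypass this via Bernstein's second adjointness, applicable by \cite[Th\'{e}or\`{e}me 1.5]{dat_finitude}, which gives
$$(r_P^G V)^{I_M} \;\cong\; \Hom_{R[G]}\bigl(i_{\bar P}^G\cInd_{I_M}^M R,\; V\bigr);$$
together with $V^I = \Hom_{R[G]}(\cInd_I^G R, V)$, the lemma reduces to a canonical $R[G]$-module isomorphism $\cInd_I^G R \cong i_{\bar P}^G(\cInd_{I_M}^M R)$, which can be established by decomposing both sides along the Bruhat stratification $G = \bigsqcup_{w \in W_M \backslash W_G}\bar P w I$ and checking that all intervening transition factors and Jacobians are powers of $q$, hence units in $R$.
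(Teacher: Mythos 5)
The paper disposes of this lemma purely by citation: injectivity is \cite[II.3.1]{vig} and surjectivity is \cite[Cor 3.9(i)]{dat_finitude}. Your injectivity half is a correct unpacking of the Vign\'eras/Casselman argument that lies behind the first citation: the observations that $q\in R^{\times}$ forces $p\in R^{\times}$ (so averaging over compact open subgroups of $N$ and $\bar N$ is legitimate), and that a strongly positive central element $z$ propagates $I\cap N$-invariance to invariance under arbitrarily large subgroups of $N$, are exactly the right ingredients. Your diagnosis of why surjectivity is the hard half (the pro-order of $I_M$ involves $q-1$, which need not be invertible, so one cannot average over $I$) and your choice of second adjointness via \cite{dat_finitude} as the workaround likewise match the mechanism behind the second citation.

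The gap is in the final step of the surjectivity argument. You have pushed the entire content of the lemma into the asserted $R[G]$-isomorphism $\cInd_I^GR\cong i_{\bar P}^G(\cInd_{I_M}^MR)$, and the proposed verification --- decomposing along the Bruhat stratification and checking that the Jacobians are powers of $q$ --- does not deliver it. A Mackey/Bruhat decomposition of $i_{\bar P}^G(\cInd_{I_M}^MR)$ as an $I$-module (or of $\cInd_I^GR$ as a $\bar P$-module) produces only a filtration whose graded pieces match those of the other side; in the generality required here ($\ell\mid q-1$ is allowed) the extensions between these pieces need not split, and matching graded pieces does not yield a $G$-equivariant isomorphism of the representing objects. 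Establishing this isomorphism is genuinely the hard part: it amounts to the statement that $(I,\mathbf{1})$ is a $G$-cover of $(I_M,\mathbf{1})$ in the sense of \cite{bushnell_kutzko} with $R$-coefficients, which combined with second adjointness is essentially the content of \cite[Cor 3.9]{dat_finitude} itself. A second, smaller omission: the lemma asserts that the isomorphism $V^I\isomto(r_P^GV)^{I_M}$ is the one induced by $p_N$, so even granting the isomorphism of representing objects you must identify your composite of Frobenius reciprocity and second adjointness with this specific map; such compatibilities are not automatic (the paper expends real effort on an analogous one in Proposition~\ref{differbyunit}).
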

\begin{proof}
The injectivity was proved for arbitrary reductive groups in \cite[II.3.1]{vig}. The surjectivity is \cite[Cor 3.9(i)]{dat_finitude}, on general linear groups and (when $p\neq 2$) classical groups.
\end{proof}

\subsection{Additional structure in the quasi-banal setting}\label{quasibanal:section}

In this entire section, we assume that $\ell$ is \emph{quasi-banal}, which means:
\begin{gather*}
\text{$\ell$ is banal for $GL_n(F)$}\\
\text{or}\\
\text{$\ell>n$ and $q\equiv 1$ mod $\ell$.}
\end{gather*}
We will use this subsection later, in a global context, to deduce Corollary~\ref{quasibanaliwahorirestatement}. Let $\cH(G,I)_k=\cH(G,I)\otimes_{W(k)}k=k[I\backslash G/I]$ denote the Iwahori--Hecke algebra over $k$, and similarly let $\cA_k$, $\cH_{W,k}$, and $\cZ_k$ denote the reductions mod-$\ell$ of the subrings $\cA$, $\cH_W$, and $\cZ$.  The relations (1)-(6) in the Bernstein presentation simplify when $q=1$ mod $\ell$. In the quasi-banal setting, Vign\'{e}ras has proved the following mod-$\ell$ analogue of the well-known result over $\CC$:

\begin{prop}[Vign\'{e}ras, \cite{cht} Appendix B]
Let $\ell$ be quasi-banal. The functor $V\mapsto V^I$ defines an equivalence of categories from the abelian subcategory of smooth $k[G]$-modules generated by their $I$-fixed vectors to the category of $\cH(G,I)_k$-modules.
\end{prop}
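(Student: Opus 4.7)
The plan is to exhibit a quasi-inverse explicitly via a Morita-style argument. Let $P := k[G/I] = \cInd_I^G \mathbf{1}_k$, regarded as a $(k[G],\cH(G,I)_k)$-bimodule, where the right action is right-convolution and realizes the standard identification $\End_{k[G]}(P) \cong \cH(G,I)_k^{\mathrm{op}}$. Frobenius reciprocity gives a natural isomorphism $V^I \cong \Hom_{k[G]}(P,V)$, so the functor $V\mapsto V^I$ is representable. The candidate quasi-inverse is $M\mapsto P\otimes_{\cH(G,I)_k} M$, whose image lies in the subcategory $\mathcal{C}$ of smooth $k[G]$-modules generated by their $I$-fixed vectors.

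First I would verify that $P$ is a generator of $\mathcal{C}$: by the very definition of $\mathcal{C}$, every $V$ satisfies $V = k[G]\cdot V^I$, and so the counit $P\otimes_{\cH(G,I)_k} V^I \to V$ is surjective. Next I would check the unit $M\to (P\otimes_{\cH(G,I)_k} M)^I$ is an isomorphism: this reduces, by exactness arguments, to the case $M = \cH(G,I)_k$, where it amounts to $P^I$ being a free right $\cH(G,I)_k$-module of rank one generated by $1_I$, which follows from Iwahori--Matsumoto.

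The heart of the argument is showing the functor $(-)^I$ is exact on $\mathcal{C}$, equivalently that $P$ is projective there. In the banal case this is standard: $I$ has a pro-$p$ normal subgroup $I^+$ acting trivially on $I$-fixed vectors, and $I/I^+$ has order prime to $\ell$, so the Haar-measure averaging idempotent exists in $k[I]$. In the limit regime $\ell>n$ and $q\equiv 1\pmod\ell$, this averaging fails because $q-1$ vanishes in $k$; instead one exploits the hypothesis $\ell>n$, which keeps the finite Weyl group $S_n$ prime to $\ell$. Using the Bernstein presentation of $\cH(G,I)_k$, which in this regime simplifies drastically (the quadratic relations become $S_i^2 = 1$ and the commutation relations become exact), one produces the necessary idempotents inside $\cH_{W,k}$ and shows that unramified principal series decompose sufficiently nicely; this is precisely the content of Vign\'eras's analysis in \cite[Appendix B]{cht}.

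The principal obstacle is therefore the quasi-banal limit case: without the averaging trick, one has to extract semisimplicity of $\mathcal{C}$ from the combinatorics of the mod-$\ell$ Iwahori--Hecke algebra and the numerical coincidence $\ell>n$. Once exactness of $(-)^I$ and projectivity of $P$ are in hand, the equivalence of categories follows formally: the unit and counit are isomorphisms on the projective generator $P$, hence on all of $\mathcal{C}$ and all of $\cH(G,I)_k$-Mod, by the standard adjunction yoga.
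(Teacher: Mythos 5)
First, note that the paper does not prove this proposition at all: it is quoted verbatim, with attribution, from Vign\'eras's appendix to \cite{cht}, so there is no internal proof to compare against. Your Morita skeleton is the standard one and is correct as far as it goes: $V^I\cong\Hom_{k[G]}(k[G/I],V)$, the counit $k[G/I]\otimes_{\cH(G,I)_k}V^I\to V$ is surjective exactly on the subcategory in question, and the whole statement reduces to $k[G/I]$ being a compact projective generator of that subcategory with endomorphism ring $\cH(G,I)_k^{\mathrm{op}}$.

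The problem is that the entire mathematical content of the proposition sits in the step you do not carry out. In the limit case $q\equiv 1\bmod\ell$, $\ell>n$, the averaging idempotent fails (since $\ell\mid q-1$ divides the pro-order of $I$), $k[G/I]$ is \emph{not} projective in the full category of smooth $k[G]$-modules, and $(-)^I$ is not exact there; one must prove projectivity/exactness relative to the subcategory, and also that this subcategory is closed under subobjects (a quotient of a module generated by its $I$-fixed vectors obviously stays in the subcategory, but a submodule need not a priori, and this closure is needed both for the subcategory to be abelian and for your ``unit and counit are isomorphisms'' bootstrap). For all of this you write that it ``is precisely the content of Vign\'eras's analysis in \cite[Appendix B]{cht}'' --- i.e.\ you defer the decisive step to the very theorem being proven. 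There is also a minor circularity of order: your reduction of the unit isomorphism to $M=\cH(G,I)_k$ already invokes exactness of $(-)^I$, which you only discuss afterwards. So as a self-contained argument the proposal has a genuine gap at exactly the point where the difficulty lies; as a gloss on why the cited result is plausible, it is a reasonable outline, and in that sense it mirrors what the paper itself does (cite Vign\'eras and move on).
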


\begin{prop}\label{Ifixedcyclic}
Let $\ell$ be quasi-banal. There is an isomorphism of $\cH(G,I)_k$-modules $$k[G/K]^I\cong \cH(G,I)_k\otimes_{\cH_{W,k}}\textbf{1}.$$
\end{prop}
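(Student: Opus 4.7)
The plan is to use Vignéras' equivalence (the preceding proposition) to reduce the statement to a Yoneda-style comparison of Hom functors. Since $k[G/K]$ is generated as a $G$-representation by its spherical vector $1_K \in k[G/K]^K \subseteq k[G/K]^I$, it lies in the subcategory covered by that equivalence.

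The key step is to establish the identity
\begin{equation*}
V^K \;=\; \Hom_{\cH_{W,k}}(\mathbf{1}, V^I) \;=\; \{v \in V^I : [IwI] \cdot v = [IwI:I]\, v \text{ for all basis elements of } \cH_{W,k}\}
\end{equation*}
for every smooth $k[G]$-module $V$ in Vignéras' subcategory. The inclusion $V^K \subseteq \Hom_{\cH_{W,k}}(\mathbf{1}, V^I)$ is immediate, since for $v \in V^K$ the operator $[IwI]$ acts by summing $[IwI:I]$ copies of $v$. For the reverse inclusion, I would use an averaging argument: by the Bruhat decomposition $K = \bigsqcup_w IwI$,
\begin{equation*}
\int_K gv\, dg \;=\; \Vol(I)\sum_w [IwI] \cdot v \;=\; \Vol(I)\sum_w [IwI:I]\, v \;=\; \Vol(K)\, v,
\end{equation*}
so $e_K v = v$ and $v \in V^K$, provided $[K:I]$ is invertible in $k$. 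This is the one place the quasi-banal hypothesis is used: in the banal case $\ell \nmid |GL_n(\FF_q)|$, and in the case $\ell > n$ with $q \equiv 1 \pmod\ell$ one computes $[K:I] = \prod_{i=1}^n (q^i-1)/(q-1)$ and observes that each factor has $\ell$-adic valuation $v_\ell(i) = 0$ since $i \leq n < \ell$.

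Given this identity, the universal property $\Hom_{k[G]}(k[G/K], V) = V^K$, combined with tensor-hom adjunction and Vignéras' equivalence, yields natural isomorphisms
\begin{equation*}
\Hom_{\cH(G,I)_k}(k[G/K]^I, V^I) \,\cong\, V^K \,\cong\, \Hom_{\cH_{W,k}}(\mathbf{1}, V^I) \,\cong\, \Hom_{\cH(G,I)_k}\bigl(\cH(G,I)_k \otimes_{\cH_{W,k}} \mathbf{1},\, V^I\bigr).
\end{equation*}
Since every $\cH(G,I)_k$-module is of the form $V^I$ via Vignéras' equivalence, Yoneda's lemma produces the desired isomorphism $k[G/K]^I \cong \cH(G,I)_k \otimes_{\cH_{W,k}} \mathbf{1}$; tracing through the isomorphisms shows it agrees with the natural map $T \otimes 1 \mapsto T \cdot 1_K$. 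The main obstacle I anticipate is the averaging argument above, specifically ensuring $[K:I]$ is a unit in $k$ in both quasi-banal cases; the rest is essentially formal.
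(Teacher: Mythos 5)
Your argument is correct, but it takes a genuinely different route from the paper's. The paper does not invoke Vign\'{e}ras' equivalence at all: it first uses the averaging surjection $k[G/I]^I\twoheadrightarrow k[G/K]^I$, $f\mapsto \frac{1}{(K:I)}\sum_{g\in K/I}f(g\,\cdot)$ (this is where quasi-banality enters, exactly as in your computation of $[K:I]=\prod_{i=1}^n(q^i-1)/(q-1)\equiv n!\bmod\ell$) to see that $k[G/K]^I$ is cyclic over $\cH(G,I)_k$ generated by $1_K$, obtains the same map $\Theta:h\otimes 1\mapsto h*1_K$ by the adjunction you use at the end, and then proves injectivity by a rank count over the center: both sides are free of rank $n!$ over $\cZ_k$, the source via the Bernstein decomposition $\cH(G,I)=\cA\otimes\cH_W$ with $\cA$ free of rank $n!$ over $\cZ$, the target via the Satake--Kato isomorphism $k[G/K]^I\cong k[\Lambda]$. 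You instead deploy the averaging trick to prove the eigenspace identity $V^K=\Hom_{\cH_{W,k}}(\mathbf{1},V^I)$ and then conclude by Frobenius reciprocity, full faithfulness and essential surjectivity of $(-)^I$, and Yoneda. Your route is more formal and portable (it would work wherever the $I$-invariants functor is an equivalence and $[K:I]$ is a unit), and it avoids the freeness of $\cA$ over $\cZ$ and the Satake--Kato identification; the paper's route is more self-contained (it needs only the algebra-structure facts, not the categorical equivalence) and its rank count delivers for free the fact that $k[G/K]^I$ and hence $\cM_{\lambda,k}^I$ has dimension $n!$, which is precisely what the subsequent corollary and Corollary~\ref{quasibanaliwahorirestatement} use. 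One cosmetic point: in the limit case the relevant observation is that $(q^i-1)/(q-1)=1+q+\cdots+q^{i-1}\equiv i\not\equiv 0\pmod\ell$ since $i\le n<\ell$; your phrase ``$\ell$-adic valuation $v_\ell(i)=0$'' is saying the right thing but should be attached to $i$ modulo $\ell$, not to the factor itself before reduction.
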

\begin{proof}
To ease notation we will drop $k$'s from the subscripts, so for example $\cH(G,I)=\cH(G,I)_k$. The module $k[G/I]$ has $I$-fixed vectors $k[I\backslash G/I]$, which is cyclic as an $\cH(G,I)$-module, generated by $1_I$. There is a surjection $k[G/I]^I \to k[G/K]^I$ given by $$\phi(x)\mapsto \frac{1}{(K:I)}\sum_{g\in K/I}f(gx).$$ The map is well-defined and surjective because $(K:I)$ is invertible in $k$ as a result of the quasi-banal hypothesis. It follows that $k[G/K]^I$ is a cyclic $\cH(G,I)$-module, generated by the characteristic function $1_K$.

Since $1_K$ is fixed by $K$, the action of $\cH_W$ on $1_K$ is via $\textbf{1}$. Thus $1\mapsto 1_K$ defines a morphism of $\cH_W$-modules $\textbf{1}\to k[G/K]^I$. By the adjunction 
$$\Hom_{\cH(G,I)}(\cH(G,I)\otimes_{\cH_W}\textbf{1}, k[G/K]^I)\cong \Hom_{\cH_W}(\textbf{1},k[G/K]^I),$$ we get a morphism of $\cH(G,I)$-modules $\Theta: \cH(G,I)\otimes_{\cH_W}\textbf{1}\to k[G/K]^I$ given by sending $h\otimes 1$ to $h* 1_K$. Since $1_K$ is a cyclic generator of $k[G/K]^I$, the map $\Theta$ is surjective. However, since $\cA\cong k[\Lambda]$ is free of rank $n!$ over $\cZ\cong k[\Lambda]^{W_G}$, and $\cH(G,I) = \cA\otimes \cH_W$ (as $\cZ$-module), we conclude that $\cH(G,I)\otimes_{\cH_W}\textbf{1}$ is free of rank $n!$ over $\cZ$. On the other hand, Equation (\ref{satake-kato}) identifies $k[G/K]^I = k[I\backslash G/K]\cong k[\Lambda]$, showing that $k[G/K]^I$ is also free of rank $n!$ as a $\cZ$-module. It follows that $\Theta$ is an isomorphism.
\end{proof}

\begin{cor}
Let $\cA_k$ be an $\cH(G,I)_k$ module via the projection $$\cH(G,I)_k\twoheadrightarrow \cH(G,I)_k\otimes_{\cH_{W,k}}\textbf{1}\cong \cA_k.$$ Let $\lambda:\cZ_k\to k$ be a homomorphism. Then $\cM_{\lambda,k}^I$ is isomorphic as an $\cH(G,I)_k$-module to the $n!$-dimensional $k$-algebra $\cA_k\otimes_{\cZ_k,\lambda}k$. 
\end{cor}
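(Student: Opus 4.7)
The plan is to apply the $I$-invariants functor to the defining presentation $\cM_{\lambda,k}=k[G/K]\otimes_{\cZ_k,\lambda}k$ and then identify the result using Proposition~\ref{Ifixedcyclic}.

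First I would invoke the quasi-banal hypothesis via Vign\'eras's equivalence of categories: the functor $(-)^I$ is exact on the subcategory of smooth $k[G]$-modules generated by their $I$-fixed vectors. Both $k[G/K]$ and its quotient $\cM_{\lambda,k}$ belong to this subcategory, since each is generated as a $k[G]$-module by the $K$-fixed (hence $I$-fixed) vector $1_K$.

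Second I would show that $(-)^I$ commutes with the coinvariants functor $(-)\otimes_{\cZ_k,\lambda}k$. This is a standard presentation argument: take a free presentation $\cZ_k^{\oplus S}\to \cZ_k^{\oplus T}\to k\to 0$ of $k$ as a $\cZ_k$-module via $\lambda$, tensor it with $k[G/K]$, apply $(-)^I$ (which is exact and commutes with finite direct sums by the previous step), and compare with the same presentation tensored against $k[G/K]^I$. The two resulting right-exact sequences agree in their first two terms, forcing a natural isomorphism
\begin{equation*}
\cM_{\lambda,k}^I \;\cong\; k[G/K]^I\otimes_{\cZ_k,\lambda}k
\end{equation*}
of $\cH(G,I)_k$-modules; the tensor product lives naturally in $\cH(G,I)_k$-modules because $\cZ_k$ is identified with the center of $\cH(G,I)_k$ via Bernstein, so the ideal $\ker(\lambda)$ acts centrally.

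Finally I would substitute the $\cH(G,I)_k$-module isomorphism $k[G/K]^I\cong \cA_k$ supplied by Proposition~\ref{Ifixedcyclic} (via $\cA_k\cong \cH(G,I)_k\otimes_{\cH_{W,k}}\mathbf{1}$) to conclude $\cM_{\lambda,k}^I\cong \cA_k\otimes_{\cZ_k,\lambda}k$. The dimension count then drops out of the proof of Proposition~\ref{Ifixedcyclic}, where $\cA_k$ was shown to be free of rank $n!$ over $\cZ_k$. The only nonformal input is exactness of $I$-invariants, which is exactly what the quasi-banal hypothesis buys us; so there is no serious obstacle, and the argument is essentially a bookkeeping exercise stacked on top of the two preceding propositions.
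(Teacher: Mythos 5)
Your argument is correct and rests on the same mechanism as the paper's: the paper obtains the key identity $\cM_{\lambda,k}^I \cong (k[G/K]^I)\otimes_{\cZ_k,\lambda}k$ by citing the proof of \cite[Lemma 5.1.4]{cht}, which in turn relies on the exactness of $(-)^I$ on the subcategory generated by $I$-fixed vectors in the quasi-banal setting (Vign\'{e}ras's equivalence), exactly the input you make explicit via the presentation argument; the conclusion then follows from Proposition~\ref{Ifixedcyclic} in both treatments. In short, you have simply unwound the citation, and there is no gap.
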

\begin{proof}
Using the proof of \cite[Lemma 5.1.4]{cht}, we find that $$\cM_{\lambda,k}^I:=(k[G/K]\otimes_{\cZ,\lambda}k)^I = (k[G/K]^I)\otimes_{\cZ,\lambda}k.$$ Since $k[G/K]^I\cong \cH(G,I)_k\otimes_{\cH_{W,k}}\textbf{1}$ is isomorphic to $\cA_k$ as an $\cA_k$-module, the result follows.
\end{proof}

\subsection{Iwahori-fixed Whittaker functions}
Let $W_{\lambda}^0$ be the canonical element of $\Ind_U^G\psi_R$ guaranteed by Lemma~\ref{Shintani}, and let $$\Lambda_R:\cM_{\lambda,R}\to \Ind_U^G\psi_R$$ be the associated Whittaker model $1_K\otimes 1\mapsto W^0_{\lambda}$. The largest technical obstacle to proving our main result, Theorem~\ref{universalmodule:conj} in Section~\ref{mainthm:section}, is showing that $\Lambda_R$ is compatible with parabolic restriction, in the following sense.  Let $P=MN$ be a proper parabolic subgroup. On one hand, there is the map $\Psi$ such that the following diagram of $\cZ[M]$-modules commutes:
\begin{equation}\label{maindiagram}
\xymatrix{
W(k)[G/K] \ar[r]^{\Lambda_{\cZ}}\ar[d]^{\Phi} &\Ind_U^G\psi_{\cZ} \ar[d]^{\Phi'_{\cZ}}\\
W(k)[M/K_M] \ar@{-->}[r]^{\Psi} & \Ind_{U_M}^M\psi_{\cZ},
}
\end{equation}
where the vertical maps are given by Lemmas~\ref{jacquetmoduleM} and Proposition~\ref{philemma1}. Namely $\Psi$ is the composition
$$W(k)[M/K_M] \cong r_P^G(W(k)[G/K]) \xrightarrow{r_P^G\Lambda_{\cZ}} r_P^G(\Ind_U^G\psi_\cZ)\cong \Ind_{U_M}^M\psi_{\cZ},$$ and $\Psi(1_{K_M}) = \Phi'_{\cZ}(W^0_{\text{id}})$, where $\text{id}:\cZ\to \cZ$ is the identity map and $W_{\text{id}}^0$ is the canonical Whittaker function given by Corollary~\ref{shintanispacefreerankone}.

On the other hand, there is the canonical map 
\begin{align*}
\Lambda^M_{\cZ_M}:W(k)[M/K_M]&\to \Ind_{U_M}^M\psi_{\cZ_M}\\
1_{K_M}&\mapsto W_{\text{id}}^{0,M},
\end{align*} where $\text{id}:\cZ_M\to \cZ_M$ is the identity and $W_{\text{id}}^{0,M}$ is the canonical element of $(\Ind_{U_M}^M\psi_{\cZ_M})^{K_M,\text{id}}$ satisfying $W_{\text{id}}^{0,M}(1)=1$, given by Corollary~\ref{shintaniforlevi}. To make the induction argument work in Section~\ref{mainthm:section}, we need to show the two maps agree in the following sense.

\begin{prop}\label{differbyunit}
Let $j^*:\Ind_{U_M}^M\psi_{\cZ}\to \Ind_{U_M}^M\psi_{\cZ_M}$ be pushforward along the canonical inclusion $j:\cZ\hookrightarrow \cZ_M$. The composition $$W(k)[M/K_M]\xrightarrow{\Psi} \Ind_{U_M}^M\psi_{\cZ}\xrightarrow{j^*} \Ind_{U_M}^M\psi_{\cZ_M}$$ differs from $\Lambda_{\cZ_M}^M$ by multiplication by a unit in $\cZ_M$.
\end{prop}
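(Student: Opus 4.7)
The plan is to prove that $\widetilde{W} := (j^* \circ \Psi)(1_{K_M})$ lies in the subspace $(\Ind_{U_M}^M \psi_{\cZ_M})^{K_M, \mathrm{id}}$ and that $\widetilde{W}(1)$ is a unit in $\cZ_M$. Lemma~\ref{shintaniforlevi}, applied over $R' = \cZ_M$ with $\tilde\lambda = \mathrm{id}$, identifies this subspace as the free rank-one $\cZ_M$-module generated by $W_{\mathrm{id}}^{0,M} = \Lambda_{\cZ_M}^M(1_{K_M})$, with evaluation at the identity realizing the isomorphism to $\cZ_M$. Granted the two claims, $\widetilde{W} = \widetilde{W}(1) \cdot W_{\mathrm{id}}^{0,M}$ with $\widetilde{W}(1) \in \cZ_M^\times$; and since both $j^* \circ \Psi$ and $\Lambda_{\cZ_M}^M$ are $M$-equivariant maps out of the cyclic $M$-module $W(k)[M/K_M]$ determined by their value at $1_{K_M}$, the two maps differ by coefficient multiplication by the unit $\widetilde{W}(1)$.

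For the identity-eigenvector claim, $\widetilde{W}$ is visibly $K_M$-fixed, and it vanishes at $\varpi^\mu$ for $\mu$ non-dominant within some Levi block of $M$ by the conductor-zero argument from the proof of Proposition~\ref{Shintani} applied to $\psi|_{U_M}$ (which is non-degenerate on each block's simple-root subgroups). The essential task is to verify the blockwise Hecke recurrence analogous to condition (3) in the proof of Proposition~\ref{Shintani}: for every $z \in \cZ_M$, $z * \widetilde{W} = z \cdot \widetilde{W}$. Functoriality of the $\cZ_M$-Hecke action under $M$-equivariant maps gives $\Psi(z * 1_{K_M}) = z * \Psi(1_{K_M})$, whence $z * \widetilde{W} = j^*\Psi(z)$; thus one must identify $j^*\Psi(z)$ with $z \cdot \widetilde{W}$. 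For $z \in j(\cZ)$ this is immediate from the $\cZ$-equivariance of $\Psi$ and the identity-eigenvector property of $W^0_{\mathrm{id}}$; promoting it to general $z \in \cZ_M$ is the crux. Since no explicit formula for $\Phi'$ is available (as Section~\ref{jacquetmodule:section} notes), one must appeal to the Iwahori--Hecke machinery of the present section and the Bernstein presentation to analyze how Hecke operators in $\cZ_M$ transport across parabolic restriction.

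For the unit claim, $\widetilde{W}(1) = j(\Psi(1_{K_M})(1))$. Since $\Phi(1_K) = 1_{K_M}$ by Lemma~\ref{jacquetmoduleM} and $\Lambda_\cZ(1_K) = W^0_{\mathrm{id}}$ with $W^0_{\mathrm{id}}(1) = 1$, tracking the value at identity through the Jacquet-module isomorphism of Proposition~\ref{philemma1} picks up only modulus-character and normalization factors from un-normalized parabolic restriction. Such factors are Laurent monomials in the Satake variables, hence units in $\cZ_M \cong W(k)[\Lambda]^{W_M}$.

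The main obstacle is extending the eigenvector identity from the subalgebra $j(\cZ) \subset \cZ_M$ to the full algebra $\cZ_M$: the $\cZ$-equivariance of $\Psi$ handles only $j(\cZ)$, and without an explicit description of $\Phi'$, promoting to all of $\cZ_M$ is exactly where the Iwahori--Hecke techniques of the present section (in particular the Bernstein presentation relating $\cZ \hookrightarrow \cZ_M \hookrightarrow \cH(G,I)$) must do the essential work.
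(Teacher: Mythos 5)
Your outline correctly isolates the two things that need to be proved --- that $\widetilde{W}=j^*\Psi(1_{K_M})$ is a $(\cZ_M,\mathrm{id})$-eigenvector and that $\widetilde{W}(1)\in\cZ_M^{\times}$ --- and correctly observes that $M$-equivariance then finishes the argument. But the proposal stops exactly where the real work begins. You write that promoting the eigenvector identity from $j(\cZ)$ to all of $\cZ_M$ ``is the crux'' and that ``one must appeal to the Iwahori--Hecke machinery,'' without supplying that argument; this is the entire content of the proposition, so as it stands the proof has a genuine gap. The paper's resolution is Lemma~\ref{cyclicmodule}: after extending scalars to $\kappa=\Frac(\cZ)$, the space of $K_M$-fixed Whittaker functions on which merely the \emph{small} algebra $\cZ$ acts identically via $*$ and $\cdot$ is computed to be exactly the cyclic $(\cZ_M,*)$-module generated by $W^0|_M$ (the dimension count $n!$ comes from Lazarus's Iwahori-level computation, and the passage from $\cZ_T*W^0|_M$ down to its $K_M$-fixed part uses that $\cH_{W_M}$ acts through the trivial character). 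Since both $\Psi(1_{K_M})$ and $W^{0,M}_{\mathrm{id}}$ lie in this space, each is, after clearing denominators in $\cZ$, a $(\cZ_M,*)$-multiple of $W^0|_M$; on such multiples the $*$ and $\cdot$ actions of all of $\cZ_M$ agree, and a torsion-freeness argument removes the denominators. Nothing in your sketch substitutes for this comparison of the two elements inside a common rank-one-over-$\cZ_M$ ambient module.

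Your justification of the unit claim is also not valid as written. You propose to ``track the value at identity through the Jacquet-module isomorphism of Proposition~\ref{philemma1}'' and assert it picks up only monomial normalization factors; but the paper explicitly notes that no explicit formula for $\Phi'$ is available (the isomorphism $r_P^G(\Ind_U^G\psi)\cong\Ind_{U_M}^M\psi$ is produced by second adjointness and Yoneda, and depends on a choice of Haar measure), so there is nothing to track through. The paper instead deduces from Lemma~\ref{iwahoriisom} that $\Phi'_{\cZ_M}(W^0_{\mathrm{id}})$ is nonzero modulo every maximal ideal of $\cZ_M$, and only \emph{after} establishing the eigenvector property does Lemma~\ref{shintaniforlevi} convert this into $j(\Psi(1_{K_M}))(1)\in\cZ_M^{\times}$. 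Note also that your claim that $\widetilde{W}$ vanishes at non-dominant $\varpi^\mu$ by the conductor-zero argument is automatic once $K_M$-fixedness is known and is not where any difficulty lies; the difficulty is entirely in the Hecke recurrence for the full algebra $\cZ_M$, which you have named but not proved.
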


The strategy involves extending scalars to $\kappa := \Frac(\cZ)$, where it is easier to check the compatibility of the two maps because $\Frac(\cZ)$ is a field of characteristic zero. We will make repeated use of the fact that $\Frac(\cZ_M)=\cZ_M\otimes_{\cZ}\kappa$. In particular, if a $\cZ_M$-module is torsion-free when restricted to $\cZ$, it is also torsion-free as a $\cZ_M$-module.

%Even though we do not have an explicit description of the isomorphism in Proposition~\ref{philemma1}, it is an isomorphism on Iwahori fixed vectors thanks to Lemma~\ref{iwahoriisom}.
%\begin{lemma}\label{philemma2}
%Let $\lambda:R\to R'$ be a morphism of $W(k)$-algebras, and let $W^0$ be an element of $(\Ind_U^G\psi_R)^I$ satisfying $W^0(1)=1$. Let $\Phi'_R:\Ind_U^G\psi_R\to \Ind_{U_M}^M\psi_R$ be the morphism of Proposition~\ref{philemma1}. If we consider $\Phi'_R(W^0)\in \Ind_{U_M}^M\psi_{R}$ as a function $M\to R$, then $(\lambda\circ \Phi'_R)(W^0)$ is nonzero.
%\end{lemma}
%\begin{proof}
%We have $(\lambda\circ W^0)(1)=\lambda(1)=1$. Hence $\lambda\circ W^0$ is a nonzero element of $(\Ind_U^G\psi_{R'})^I$.  By Lemma~\ref{iwahoriisom}, $\Phi'_{R'}(\lambda\circ W^0)$ is nonzero. On the other hand, $\Phi'_{R'}(\lambda\circ W^0) = \lambda\circ \Phi'_{R}(W^0)$ by Proposition~\ref{philemma1}.
%\end{proof}

For any smooth $W(k)[G]$-module $V$ recall that $*$ denotes the action of $\cZ$ on $V^K$ via double-coset convolution operators. The Bernstein presentation gives an inclusion of algebras $\cZ\cong W(k)[\Lambda]^{W_G}\to \cH(G,I)$ whose image is the center of $\cH(G,I)$. By virtue of this inclusion, $\cZ$ acts on any $\cH(G,I)$-module. Similarly, the embedding $j:\cZ\to \cZ_M$ gives an action of $\cZ$ on any $\cZ_M$-module, for example, $\cZ$ acts on $X^{K_M}$ for any $X\in \Rep_{W(k)}(M)$. Finally, $\cZ$ also acts on $\cH(M,I_M)$-modules, via the embeddings $\cZ\hookrightarrow \cZ_M \hookrightarrow \cH(M,I_M)$. Recall that the Bernstein presentation gives an embedding $\cZ_T\hookrightarrow \cH(G,I)$ whose image we denote $\cA$. We will denote all these convolution operator actions by $*$.

On the other hand, the Whittaker spaces $\Ind_U^G\psi_{\cZ}$, $\Ind_{U_M}^M\psi_{\cZ}$, $\Ind_{U_M}^M\psi_{\cZ_M}$, $\Ind_U^G\psi_{\kappa}$, $\Ind_{U_M}^M\psi_{\kappa}$, $\Ind_{U_M}^M\psi_{\kappa}\otimes_{\cZ}\cZ_M$ carry actions of $\cZ$, $\cZ$, and $\cZ_M$, $\kappa$, $\kappa$, and $\Frac(\cZ_M)$, respectively, by multiplication of the values of Whittaker functions. We will denote all these actions by $\cdot$. If no action is specified it is implicitly assumed to be the $\cdot$ action.

We will compare the actions $*$ and $\cdot$ by computing the subspace of $(\Ind_{U_M}^M\psi_{\kappa})^{K_M}$ where the two $\cZ_M$-actions coincide \emph{on the subring $\cZ$:}
$$(\Ind_{U_M}^M\psi_{\kappa})^{K_M, (\cZ,*)=(\cZ,\cdot)}:=\{ v \in (\Ind_{U_M}^M\psi_{\kappa})^{K_M}:\ z*v = z\cdot v\ ,\ z\in \cZ\}.$$ 

To save notation, abbreviate to $W^0$ the Whittaker function $W_{\text{id}}^0\in (\Ind_U^G\psi_{\cZ})^{K,\text{id}}$ given by Corollary~\ref{shintanispacefreerankone}. Let $W^0|_M$ be its restriction to $M$, considered as an element of $(\Ind_{U_M}^M\psi_{\cZ})^{K_M}$. Let $X:=\cZ_M* W^0|_M$ be the $(\cZ_M,*)$-submodule of $(\Ind_{U_M}^M\psi_{\cZ})^{K_M}$ generated by $W^0|_M$.
\begin{lemma}\label{cyclicmodule}
The image of $X\otimes_{\cZ}\kappa$ in the embedding 
\begin{align*}
(\Ind_{U_M}^M\psi_{\cZ})^{K_M}\otimes_{\cZ}\kappa &\hookrightarrow (\Ind_{U_M}^M\psi_{\kappa})^{K_M}\\
W\otimes \xi &\mapsto W\cdot \xi
\end{align*}
equals $(\Ind_{U_M}^M\psi_{\kappa})^{K_M, (\cZ,*)=(\cZ,\cdot)}$.
\end{lemma}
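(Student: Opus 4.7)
Both sides are modules over $\cZ_M\otimes_\cZ\kappa$, which is the field $\Frac(\cZ_M)$: indeed $\cZ\hookrightarrow\cZ_M$ realizes via Satake as the inclusion of $W_G$-invariants inside $W_M$-invariants of the Laurent polynomial ring $\cA$, so it is finite, free of rank $|W_G/W_M|$, and generically \'etale in characteristic zero. The image of $X\otimes_\cZ\kappa$ is the cyclic $\Frac(\cZ_M)$-submodule generated by (the image of) $W^0|_M$; since $W^0|_M(1)=W^0(1)=1\neq 0$, this submodule is nonzero, hence free of rank one over the field $\Frac(\cZ_M)$, of $\kappa$-dimension exactly $|W_G/W_M|$.

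The heart of the argument is to verify containment of the image inside the $*=\cdot$ subspace. By $\cZ_M$-stability of that subspace this reduces to showing that $W^0|_M$ itself satisfies $z*_MW^0|_M=z\cdot W^0|_M$ for every $z\in\cZ$, and I expect this to be the main obstacle. My plan is to exploit the Iwahori-level compatibility: after extending scalars to $\kappa$ (where $q$ is invertible), Lemma~\ref{iwahoriisom} applied to $V=\Ind_U^G\psi_\kappa$ yields an isomorphism $V^I\isomto(r_P^GV)^{I_M}$ via $p_N$ equivariant for the Bernstein embedding $j_P^G\colon\cH(M,I_M)\hookrightarrow\cH(G,I)$, and hence for the embedding of centers $\cZ\hookrightarrow\cZ_M$. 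Since $W^0\in V^K\subset V^I$ is a $\cZ$-eigenvector with eigenvalue $\mathrm{id}$ under $*_G$, its image $p_N(W^0)\in(r_P^GV)^{I_M}$ is a $K_M$-fixed $\cZ$-eigenvector under $*_M$ with the same eigenvalue. Transporting along the Bernstein--Zelevinsky isomorphism $r_P^GV\cong\Ind_{U_M}^M\psi_\kappa$ of Proposition~\ref{philemma1} thus produces an element of $(\Ind_{U_M}^M\psi_\kappa)^{K_M}$ lying in the $*=\cdot$ subspace; the last step is to match this transported element to $W^0|_M$ up to a $\cZ_M$-unit by comparing values along the Iwasawa decomposition of $M$ using the Shintani formula, placing $W^0|_M$ itself in the $*=\cdot$ subspace.

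For the reverse inclusion I would bound the dimension of the $*=\cdot$ subspace. Extending scalars to an algebraic closure $\bar\kappa$, the $\cZ_M$-characters $\tilde\lambda\colon\cZ_M\to\bar\kappa$ restricting on $\cZ$ to the structural inclusion correspond to the $W_M$-orbits in the $W_G$-orbit of the identity Satake parameter, of which there are exactly $|W_G/W_M|$. By Lemma~\ref{shintaniforlevi}, each $\tilde\lambda$-eigenspace in $(\Ind_{U_M}^M\psi_{\bar\kappa})^{K_M}$ is one-dimensional over $\bar\kappa$, and because $\cZ_M\otimes_\cZ\bar\kappa$ is an \'etale $\bar\kappa$-algebra its action on the $\cZ$-identity-eigenspace is semisimple, decomposing that eigenspace as the direct sum of the $\tilde\lambda$-eigenspaces. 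Hence the $*=\cdot$ subspace has $\bar\kappa$-dimension at most $|W_G/W_M|$, and the same bound holds over $\kappa$ by faithfully flat descent along $\kappa\to\bar\kappa$. Combined with the dimension count of the first paragraph, this forces equality, and the hardest ingredient remains the identification in the second paragraph of the transported $p_N(W^0)$ with a $\cZ_M$-unit multiple of $W^0|_M$, which is exactly where naive restriction to $M$ must be reconciled with the un-normalized Jacquet quotient.
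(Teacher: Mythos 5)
Your overall architecture --- first place $W^0|_M$ in the $(\cZ,*)=(\cZ,\cdot)$ subspace, then match dimensions --- is the same as the paper's, and your reverse-inclusion bound is a valid alternative route: where the paper works at the Iwahori level (identifying $(\Ind_{U_M}^M\psi_{\kappa})^{I_M}$ with a torus-level space, citing Lazarus for its $n!$-dimensionality, and then descending to $K_M$-invariants through the Iwahori--Hecke algebra), you decompose $(\Ind_{U_M}^M\psi_{\bar\kappa})^{K_M,(\cZ,*)=(\cZ,\cdot)}$ into $\tilde\lambda$-eigenspaces using that $\cZ_M\otimes_{\cZ}\bar\kappa$ is an \'etale product of $[W_G:W_M]$ copies of $\bar\kappa$ and apply Lemma~\ref{shintaniforlevi} to each factor. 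That count is correct and arguably more self-contained than the paper's.

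The genuine gap is in the containment direction, and it is the one you yourself flag as unresolved. You propose to transport $W^0$ through Lemma~\ref{iwahoriisom} and the isomorphism $r_P^G(\Ind_U^G\psi)\cong\Ind_{U_M}^M\psi$ of Proposition~\ref{philemma1}, and then to identify the transported element $\Phi'(W^0)$ with a $\cZ_M$-unit multiple of $W^0|_M$ ``by comparing values using the Shintani formula.'' This step cannot be executed as described: $\Phi'$ arises from second adjointness and an unspecified Haar-measure normalization, and the paper states explicitly that no explicit formula for it is available, so there are no values to compare. Worse, the identification you need is precisely Proposition~\ref{differbyunit}, whose proof in the paper is \emph{deduced from} the present lemma, so your route is circular relative to the paper's logical order. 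The paper avoids $\Phi'$ entirely at this point: it takes as given that the literal restriction $W^0|_M$ satisfies $z*W^0|_M=z\cdot W^0|_M$ for $z\in\cZ$ (the compatibility of restriction of spherical Whittaker functions with the embedding $\cZ\hookrightarrow\cZ_M$), and only then uses torsion-freeness of the $\cdot$-action to conclude that the cyclic module is free of rank one. Note also that your opening claim --- that the image of $X\otimes_{\cZ}\kappa$ is a cyclic $\Frac(\cZ_M)$-module --- already presupposes this containment, since otherwise the two $\cZ$-actions on the image need not agree; so in any correct write-up the containment must come first.
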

\begin{proof} 
Let $Y:= \cZ_T* W^0|_M$ be the $(\cZ_T,*)$-submodule of $(\Ind_{U_M}^M\psi_{\cZ})^{I_M}$ generated by $W^0|_M$. The Lemma follows from two claims.
\\

\noindent\textbf{Claim 1:} The image of $Y\otimes_{\cZ}\kappa$ in the embedding 
$$(\Ind_{U_M}^M\psi_{\cZ})^{I_M}\otimes_{\cZ}\kappa \to (\Ind_{U_M}^M\psi_{\kappa})^{I_M}$$ is the space $(\Ind_{U_M}^M\psi_{\kappa})^{I_M,(\cZ,*)=(\cZ,\cdot)}$.
\\

Note that $I_T = K_T = T\cap K$, so Claim 1 implies the Lemma in the case $M=T$. The isomorphism of Lemma~\ref{iwahoriisom} is not only an isomorphism of $\kappa$-vector spaces, it is equivariant with respect to the $*$ action of $\cZ_M$. Combined with Lemma~\ref{philemma1}, we have $$(\Ind_{U_M}^M\psi_{\kappa})^{I_M} \cong (\Ind_{\{1\}}^T\psi_{\kappa})^{K_T}$$ as $\kappa$-vector spaces and as $(\cZ_M,*)$-modules. As $\cZ$ acts via the natural embeddings $\cZ\hookrightarrow \cZ_M\hookrightarrow \cZ_T\hookrightarrow \cH(T,I_T)$, we have $$(\Ind_{U_M}^M\psi_{\kappa})^{I_M, (\cZ,*)=(\cZ,\cdot)} \cong (\Ind_{\{1\}}^T\psi_{\kappa})^{K_T, (\cZ,*)=(\cZ,\cdot)}.$$ An element of $(\Ind_{\{1\}}^T\psi_{\kappa})^{K_T}$ is a smooth function on the lattice $\varpi^{\ZZ^n}$. It is proved in \cite[Thm 3.2]{lazarus} that adding the additional condition that the actions $(\cZ,*)$ and $(\cZ,\cdot)$ be equivalent cuts out a space of dimension $n!$ over $\kappa$. So the space $(\Ind_{U_M}^M\psi_{\kappa})^{I_M, (\cZ,*)=(\cZ,\cdot)}$ has dimension $n!$ over $\kappa$.

On the other hand, by definition of $W^0$, the restriction $W^0|_M$ lies in the space $$(\Ind_{U_M}^M\psi_{\cZ})^{K_M,(\cZ,*)=(\cZ,\cdot)},$$ which is stable under the $*$ action of $\cZ_T$. Thus the cyclic module $Y$ is contained in $(\Ind_{U_M}^M\psi_{\cZ})^{I_M,(\cZ,*)=(\cZ,\cdot)}$. Since the action of $\cZ$ via $\cdot$ on $(\Ind_{U_M}^M\psi_{\cZ})^{I_M,(\cZ,*)=(\cZ,\cdot)}$ is torsion-free, so is the action of $\cZ$ via $*$ and hence so is the action of $\cZ_T$ via $*$.  Hence $Y$ is free of rank one over $\cZ_T\cong W(k)[\Lambda]\cong cA$, and $Y\otimes_{\cZ}\kappa$ has dimension $n!$ over $\kappa$. Since it is contained in $(\Ind_{U_M}^M\psi_{\kappa})^{I_M, (\cZ,*)=(\cZ,\cdot)}$, we have proven Claim 1.
\\

\noindent\textbf{Claim 2:} $(Y\otimes_{\cZ}\kappa)^{K_M} =X\otimes_{\cZ}\kappa$. 
\\

Note that $\ell$ is invertible in $\kappa$, hence the action of $\cH(M,I_M)$ factors through $\cH(M,I_M)[\frac{1}{\ell}] =\cH(M,I_M)\otimes_{W(k)}\cK$, where $\cK = \Frac(W(k))$. We will abbreviate
$$(-)_{\cK}:=(-)\otimes_{W(k)}\cK.$$ Let $Y'$ be the cyclic $\cH(M,I_M)$-submodule of $(\Ind_{U_M}^M\psi_Z)^{I_M}$ generated by $W^0|_M$. Then we can identify $Y'\otimes_{\cZ}\kappa$ with the $\cH(M,I_M)_{\cK}$-submodule of $(\Ind_{U_M}^M\kappa)^{I_M}$ generated by $W^0|_M$.

Since $W^0|_M$ is fixed by $K_M$, the subring $\cH_{W_M}=\cK[I_M\backslash K_M /K_M]$ of $\cH(M,I_M)_{\cK}$ acts by the ``trivial'' character $\textbf{1}:\cH_{W_M}\to \cK^{\times}$, which sends $[IwI]$ to $[IwI:I]$, which is a power of $q$, and invertible everywhere. Hence we have an equality of $\cH(M,I_M)_{\cK}$-modules:
$$Y'\otimes \kappa :=\cH(M,I_M)_{\cK}*W^0|_M =\cA_{\cK}* W^0|_M =(\cZ_T)_{\cK}*W^0|_M = Y\otimes \kappa.$$ Since $\cZ_T$ acts torsion-freely on $W^0|_M$, we have $Y\otimes\kappa \cong (\cZ_T)_{\cK},$ and Claim 2 follows since $(\cZ_T)_{\cK}^{K_M} = (\cZ_T)_{\cK}^{\cH_{W_M}} = \cK[\Lambda]^{W_M} = (\cZ_M)_{\cK}$.
\end{proof}

We conclude this section by proving Proposition~\ref{differbyunit}.

\begin{proof}[Proof of Proposition~\ref{differbyunit}]
Since $\Psi$ is $M$-equivariant, its restriction to $K_M$-fixed vectors is $(\cZ_M,*)$-equivariant, and hence also $(\cZ,*)$-equivariant. In particular $\Psi(1_{K_M})$ defines an element of the space
$$(\Ind_{U_M}^M\psi_{\cZ})^{K_M,(\cZ,*)=(\cZ,\cdot)}$$ defined before Lemma~\ref{cyclicmodule}. By the conclusion of Lemma~\ref{cyclicmodule}, there is an element $z$ of $\cZ_M$ and an element $z_0$ of $\cZ$ such that $z_0\Psi(1_{K_M})=z*W^0|_M$. If we consider $W^0|_M$ as an element of $\Ind_{U_M}^M\psi_{\cZ_M}$ by pushing it forward along the inclusion $j:\cZ\hookrightarrow \cZ_M$, we have $z_0j(\Psi(1_{K_M})) = z*W^0|_M$. By Proposition~\ref{philemma1}, $j(\Psi(1_{K_M})) = \Phi_{\cZ_M}'(W_{\text{id}}^0)$. It follows from Lemma~\ref{iwahoriisom} that $\Phi_{\cZ_M}'(W_{\text{id}}^0)$, and hence $z$, are nonzero.

On the other hand, the canonical element $$W^{0,M}_{\text{id}}\in(\Ind_{U_M}^M\psi_{\cZ_M})^{K_M,\text{id}}=:(\Ind_{U_M}^M\psi_{\cZ_M})^{K_M,(\cZ_M,*)=(\cZ_M,\cdot)}$$ also lies in the bigger space $(\Ind_{U_M}^M\psi_{\cZ_M})^{K_M,(\cZ,*)=(\cZ,\cdot)}.$ Extending scalars from $\kappa$ to $\Frac(\cZ_M)$ in the conclusion of Lemma~\ref{cyclicmodule}, we find there is some $z'\in \cZ_M$ and $z_1\in \cZ$ such that $z_1W^{0,M}_{\text{id}}=(z')*W^0|_M$. Since $W_{\text{id}}^{0,M}(1)=1$, $z'$ is nonzero.

Since $\Frac(\cZ_M) = \cZ_M\otimes_{\cZ}\Frac(\cZ)$, we can write $$\frac{z}{z'}= \frac{z''}{z_2}\in \Frac(\cZ_M)\text{, \ for some $z''\in \cZ_M$, $z_2\in \cZ$}.$$ We have 
$$z_0 j(\Psi(1_{K_M})) =z''* (\frac{z_1}{z_2}\cdot W^{0,M}_{\text{id}}) = \frac{z''z_1}{z_2}\cdot W^{0,M}_{\text{id}}\ \text{ in }(\Ind_{U_M}^M\psi_{\Frac(\cZ_M)})^{K_M}.$$ In other words,
$$z_0z_2j(\Psi(1_{K_M})) = z''z_1\cdot W^{0,M}_{\text{id}}\text{ in }(\Ind_{U_M}^M\psi_{\cZ_M})^{K_M}.$$ Thus the action of $\cZ_M$ on $z_0z_2j(\Psi(1_{K_M}))$ via $*$ is equivalent to that via $\cdot$, since it is true for any multiple of $W^0|_M$. It follows from $\cZ$-torsion freeness that the same is true for $j(\Psi(1_{K_M}))$ itself. Thus $j(\Psi(1_{K_M}))$ lies in the space
$$(\Ind_{U_M}^M\psi_{\cZ_M})^{K_M,(\cZ_M,*) = (\cZ_M,\cdot)} = (\Ind_{U_M}^M\psi_{\cZ_M})^{K_M,\text{id}}=\cZ_M\cdot W_{\text{id}}^{0,M}.$$ 

By Lemma~\ref{iwahoriisom}, we already know that $j(\Psi(1_{K_M})) = \Phi_{\cZ_M}'(W^0_{\text{id}})$ is nonzero modulo every maximal ideal of $\cZ_M$, hence the same must be true of $j(\Psi(1_{K_M}))(1)$ by Lemma~\ref{shintaniforlevi}. In particular $j(\Psi(1_{K_M}))(1)$ is a unit in $\cZ_M$. By Lemma~\ref{shintaniforlevi}, $j(\Psi(1_{K_M}))$ differs from $W^{0,M}_{\text{id}}$ by multiplication by $j(\Psi(1_{K_M}))(1)$. In other words, $j(\Psi(1_{K_M}))$ is equivalent to $\Lambda_{\cZ_M}^M(1_{K_M})$ after multiplying by a unit in $\cZ_M$. Since the maps $j\circ\Psi$ and $\Lambda_{\cZ_M}^M$ are $W(k)[M]$-equivariant, and $W(k)[M/K_M]$ is generated as a $W(k)[M]$-module by $1_{K_M}$, the statement of the Proposition follows.
\end{proof}

We emphasize that the map $$j \circ \Psi:W(k)[M/K_M]\to \Ind_{U_M}^M\psi_{\cZ_M}$$ was \emph{a priori} only $\cZ[M]$-equivariant, but we have now established that it is in fact $\cZ_M[M]$-equivariant, as it coincides with $\Lambda_{\cZ_M}^M$ up to a unit. We also emphasize that the scalar in $\cZ_M^{\times}$ by which $j\circ\Psi$ differs from $\Lambda_{\cZ_M}^M$ depends on the choice of isomorphism $r_P^G(\Ind_U^G\psi_{\cZ_M})\cong \Ind_{U_M}^M\psi_{\cZ_M}$ made in the definition of $\Phi'$ (hence of $\Psi$) in Lemma~\ref{philemma1}.

\section{The main theorem}\label{mainthm:section}

Recall that $\cZ:=W(k)[K\backslash G/K]$, $R$ is a ring, $\lambda$ is a homomorphism $\cZ\to R$, and $\cM_{\lambda,R}:=W(k)[G/K]\otimes_{\cZ,\lambda}R$.

Let $W_{\lambda}^0$ be the canonical element of $\Ind_U^G\psi_R$ guaranteed by Lemma~\ref{Shintani}, and let $\Lambda_R:\cM_{\lambda,R}\to \Ind_U^G\psi_R$ be the canonical map given by $1_K\otimes 1\mapsto W^0_{\lambda}$.

\begin{thm}\label{universalmodule:conj}
Let $R$ be a Noetherian ring. The map $\Lambda_R:\cM_{\lambda,R}\to \Ind_U^G\psi_R$ is injective.
\end{thm}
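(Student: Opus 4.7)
The plan is to proceed by induction on $n$, modeled on the proof of Theorem~\ref{flatness}, and verify the hypotheses of Lemma~\ref{cuspidal=generic} for $V:=\ker\Lambda_R$. The case $n=1$ is immediate: $U$ is trivial, $\cM_{\lambda,R}=R$, and $W_\lambda^0(1)=1$, so $\Lambda_R$ is manifestly injective. For $n>1$, $\cM_{\lambda,R}$ is admissible by Proposition~\ref{admissibility}, and since $R$ is Noetherian, the submodule $V$ is admissible as well.

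For the top derivative, Proposition~\ref{derivativefreerankone:cor} gives $\cM_{\lambda,R}^{(n)}\cong R$, and evaluation at $1$ on $\Ind_U^G\psi_R$ factors through $(U,\psi)$-coinvariants. Since $W_\lambda^0(1)=1$, the composition of $\Lambda_R^{(n)}$ with this evaluation is the identity of $R$, so $\Lambda_R^{(n)}$ is split injective; exactness of $(-)^{(n)}$ then forces $V^{(n)}=0$.

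The main step is to show $r_P^GV=0$ for every proper standard parabolic $P=MN$. By exactness of $r_P^G$ and the identifications of Lemma~\ref{jacquetmoduleM} and Proposition~\ref{philemma1}, it suffices to prove injectivity of the base-changed map $\Psi_R:=\Psi\otimes_\cZ R: W(k)[M/K_M]\otimes_\cZ R \to \Ind_{U_M}^M\psi_R$. Set $R':=\cZ_M\otimes_\cZ R$; since $\cZ_M$ is free over $\cZ$ via Satake, $R\hookrightarrow R'$, $R'$ is Noetherian as a finite $R$-algebra, and $\Ind_{U_M}^M\psi_R\hookrightarrow \Ind_{U_M}^M\psi_{R'}$ is injective. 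The domain canonically rewrites as $W(k)[M/K_M]\otimes_{\cZ_M}R'\cong \cM^M_{\lambda',R'}$, where $\lambda':\cZ_M\to R'$ is the induced character. Under this identification, Proposition~\ref{differbyunit} shows that the composition of $\Psi_R$ with the inclusion into $R'$-valued Whittaker functions coincides with $\Lambda^M_{R'}$ multiplied by the image in $(R')^\times$ of a unit of $\cZ_M$. Since $M\cong\prod_i GL_{n_i}(F)$ with each $n_i<n$, the universal module and the generating Whittaker function for $M$ over $R'$ decompose as tensor products of their $GL_{n_i}$-counterparts; using the flatness granted by Theorem~\ref{flatness} to see that tensor products of injections remain injective, the inductive hypothesis yields that $\Lambda^M_{R'}$ is injective. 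Hence $\Psi_R$ is injective and $r_P^GV=0$.

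Lemma~\ref{cuspidal=generic} then forces $V=0$. The technical heart of the argument is the comparison of $r_P^G\Lambda_R$ with the intrinsic Whittaker embedding $\Lambda^M_{R'}$ on the Levi, up to a unit scalar; this is precisely what Proposition~\ref{differbyunit} supplies, and once this comparison is in hand the induction closes exactly as in the flatness proof.
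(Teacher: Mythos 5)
Your proposal is correct and follows essentially the same route as the paper's proof: induction on $n$, with Proposition~\ref{differbyunit} supplying the unit-scalar compatibility between $r_P^G\Lambda_R$ and $\Lambda^M_{R'}$ over $R'=\cZ_M\otimes_{\cZ}R$, the Shintani/derivative computation of Proposition~\ref{derivativefreerankone:cor} killing $V^{(n)}$, and Lemma~\ref{cuspidal=generic} closing the argument. Your explicit treatment of the product Levi via tensor decomposition and flatness is a welcome elaboration of a step the paper leaves implicit, but it is a detail of the same argument rather than a different approach.
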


The proof of Theorem~\ref{universalmodule:conj} will occupy the remainder of this section. The strategy is to use induction, combined with Lemma~\ref{jacquetmoduleM}, Lemma~\ref{cuspidal=generic}, Proposition~\ref{philemma1}, and Proposition~\ref{differbyunit}.
\begin{proof}[Proof of Theorem~\ref{universalmodule:conj}]
Let $n=1$. Then $$\cZ = W(k)[F^{\times}/\cO_F^{\times}]\cong W(k)[\varpi^{\ZZ}]\cong W(k)[X_1^{\pm 1}].$$ The module $W(k)[G/K] = \cZ$ is free of rank 1 over $\cZ$. Therefore $\cM_{\lambda,R}\cong R$ with the action of $F^{\times}$ given by the unramified character $\lambda$. Since $U=\{1\}$, we have $\Ind_U^G\psi_R = C^{\infty}(F^{\times},R)$, and 
\begin{equation*}
W^0_{\lambda}(x)=
\left\{
\begin{array}{ll} \lambda(X_1^{v_F(x)})&\text{ if }v_F(x)\geq 0\\
0& \text{ otherwise}.
\end{array}\right.
\end{equation*}
In this case, the $F^{\times}$-equivariant map $\Lambda_R:R \to C^{\infty}(F^{\times},R)$ given by $1\mapsto W^0_{\lambda}$ is certainly injective since $W^0_{\lambda}(1)=1$.

Now let $n$ be arbitrary. Choose a proper parabolic subgroup with Levi decomposition $P=MN$ and consider the map $\Psi$ defined by Eqn~(\ref{maindiagram}). By Proposition~\ref{differbyunit}, the following diagram commutes, up to a scalar multiple in $\cZ_M^{\times}$:
 \begin{equation*}
\xymatrix{
W(k)[G/K] \ar[r]^{\Lambda_{\cZ}}\ar[d]^{\Phi} &\Ind_U^G\psi_{\cZ} \ar[r] &\Ind_U^G\psi_{\cZ_M}\ar[d]^{\Phi'_{\cZ_M}}\\
W(k)[M/K_M] \ar[rr]^{\Lambda^M_{\cZ_M}} && \Ind_{U_M}^M\psi_{\cZ_M}.
}
\end{equation*}

Let $R'$ be the $\cZ_M$-algebra $R\otimes_{\cZ}\cZ_M$. We make the identification of $\cZ$-modules
$$W(k)[M/K_M]\otimes_{\cZ}R \cong (W(k)[M/K_M]\otimes_{\cZ_M}\cZ_M)\otimes_{\cZ}R\cong W(k)[M/K_M]\otimes_{\cZ_M}R'.$$ By tensoring with $R$ and composing with the natural maps 
\begin{align*}
(\Ind_U^G\psi_{\cZ})\otimes_{\cZ}R&\to \Ind_U^G\psi_R\\
W\otimes \xi &\mapsto \lambda^*W\cdot \xi,
\end{align*} we obtain the following commutative-up-to-unit-scalar diagram of $\cZ$-modules:
\begin{equation*}
\xymatrix{
W(k)[G/K]\otimes_{\cZ}R \ar[r]^{\Lambda_{R}}\ar[d]^{\Phi\otimes\text{id}} &\Ind_U^G\psi_{R} \ar[r] &\Ind_U^G\psi_{R'}\ar[d]^{\Phi'_{R'}}\\
W(k)[M/K_M]\otimes_{\cZ_M}R' \ar[rr]^{\Lambda^M_{R'}} && \Ind_{U_M}^M\psi_{R'}.
}
\end{equation*}

Because the diagram is commutative up to a unit scalar in $R'$, we have $$(\Lambda_{R'}^M\circ(\Phi\otimes\text{id}))(\ker\Lambda_R)=0.$$ But $M$ is a proper Levi subgroup, so it is a product of groups $GL_{n_i}(F)$ along the diagonal, for $n_i<n$. The ring $R'$ is Noetherian because $R$ is. Thus we can apply the induction hypothesis to $$W(k)[M/K_M]\otimes_{\cZ_M}R'$$ to conclude $\Lambda^{M}_{R'}$ is injective. Therefore $(\Phi\otimes\text{id})(\ker\Lambda_R)=0$. Since $\Phi\otimes\text{id}$ factors as the composition $$\cM_{\lambda,R}\xrightarrow{p_N} r_P^G(\cM_{\lambda,R})\cong W(k)[M/K_M]\otimes_{\cZ_M}R',$$ we conclude that $p_N(\ker\Lambda_R)=0$, and hence $r_P^G(\ker\Lambda_R)=0$ for all proper parabolic subgroups $P$. By Proposition~\ref{admissibility}, $\cM_{\lambda,R}$ is admissible over $R$. Thus since $R$ is Noetherian Lemma~\ref{cuspidal=generic} tells us that either $\ker\Lambda_R=0$ or $(\ker\Lambda_R)^{(n)}\neq 0$.

We conclude the proof of Theorem~\ref{universalmodule:conj} by showing that $(\ker\Lambda_R)^{(n)}=0$. Indeed, the sequence
$$0\to \ker\Lambda_R\to \cM_{\lambda,R}\to \Ind_U^G\psi_R$$ is exact, hence so is
$$0\to (\ker\Lambda_R)^{(n)} \to (\cM_{\lambda,R})^{(n)} \xrightarrow{\Lambda_R^{(n)}} (\Ind_U^G\psi_R)^{(n)}.$$ If we let $\text{ev}_1:\Ind_U^G\psi_R\to R$ denote the map $W\mapsto W(1)$, then $\text{ev}_1\circ \Lambda_R$ factors through the composition $$\cM_{\lambda,R}\to (\cM_{\lambda,R})^{(n)}\xrightarrow{\Lambda_R^{(n)}} (\Ind_U^G\psi_R)^{(n)}\xrightarrow{\text{ev}_1^{(n)}} R.$$ But $\text{ev}_1\circ \Lambda_R$ is precisely the map inducing the isomorphism $$(\text{ev}_1\circ\Lambda_R)^{(n)}:(\cM_{\lambda,R})^{(n)}\isomto R$$ of Proposition~\ref{derivativefreerankone:cor}. In particular, $\Lambda_R^{(n)}$ is injective, and $(\ker\Lambda_R)^{(n)}$ is zero.
\end{proof}

\section{The global setup of the Ihara conjecture}\label{globalsetup:section}
\subsection{Unitary groups}
Let $B$ be a division algebra, equipped with an involution $*:B\to B$. Let $F$ denote the center of $B$, and let $F^+$ denote the subfield of $F$ fixed by $*$. The unitary group $U(B,*)_{/F^+}$ is the algebraic group over $F^+$ representing the functor:
$$R\mapsto \{g\in (B\otimes_{F^+}R)^{\times}: g^{*\otimes 1}g=1\}\text{,\ \ \  for $F^+$-algebras $R$}.$$ 

The involution $*$ is said to be \emph{of the second kind} if $F\neq F^+$. Then $[F:F^+]=2$ and $B\otimes_{F^+} \overline{F^+}=(B\otimes_{F^+}F)\otimes_{F}\overline{F^+}\cong M_n(\overline{F^+})\times M_n(\overline{F^+})$, where $n^2=\dim_F(B)$. After possibly modifying this isomorphism by an inner automorphism, $*$ is given on the target by $(X,Y)\mapsto (^tY, {^tX})$ (\cite[2.3.3]{pr}). Thus, we have
\begin{align*}
U(B,*)(\overline{F^+})&\cong \{(g,h)\in GL_n(\overline{F^+})\times GL_n(\overline{F^+}): (g,h)({^th}, {^tg})=(I,I)\}\\
&=\{(g,{^tg^{-1}})\in GL_n(\overline{F^+})\times GL_n(\overline{F^+})\}\\
&\cong GL_n(\overline{F^+})\text{\ \ (non-canonically),}
\end{align*}
so $U(B,*)$ is an inner form of $GL_n$.

Now let $F^+$ be a fixed totally real number field, and $F=EF^+$ an imaginary quadratic extension that is unramified everywhere, and let $c$ denote conjugation in $\Gal(F/F^+)$. We will construct a division algebra $B$ with center $F$, equipped with an involution $*$ extending $c$, of the second kind, such that $U(B,*)$ has certain properties desirable for studying automorphic forms.
\\

\noindent\textbf{Note:} The letter $F$ henceforth denotes a number field, whereas in all previous sections it denoted a $p$-adic field. We will eventually apply the results of the previous sections to the completions $F_w$ of $F$.
\\

We follow the notation and choices of \cite[\S 3.3-3.5]{cht}.  Let $\ell>n>1$ be a prime that splits in $F/F^+$ and let $S_{\ell}$ be the set of places of $F^+$ above $\ell$.

Choose a nonempty finite set of places $S(B)$, each of which splits in $F/F^+$, none of which lies in $S_{\ell}$, and such that, when $n$ is even, $\# S(B)$ has the same parity as $\frac{n}{2}[F^+:\QQ]$. Then there exists a division algebra $B$ with center $F$, of dimension $n^2$ over $F$, which is non-split at the places lying over $S(B)$, and such that $B^{op}\cong B\otimes_{E,c}E$. According to \cite[\S3.3]{cht}, the assumption that $\# S(B)$ has the same parity as $\frac{n}{2}[F^+:\QQ]$ for even $n$ makes it possible to extend $c$ to an involution $*:B\to B$ of the second kind, in such a way that the group $G = U(B,*)$ satisfies: \begin{itemize}
\item the unitary group $G$ is \emph{compact at infinity}, i.e., $G(F_v^+)\cong U(n)$ for all $v\in S_{\infty}$, 
\item at all the finite places $v\notin S(B)$ that do not split in the quadratic extension $F/F^+$, the unitary group is \emph{quasi-split}, i.e., $G(F_v^+)$ is a quasi-split unitary group over $F_v^+$.
\end{itemize}

For each place $v$ of $F^+$ that splits in $F$, the two places $w, \bar{w}$ lying over $F$ are exchanged by $c$, and the completions $F_w=F_{\bar{w}}$ are both equal to $F^+_v$. Choosing a place $w$ lying over $v\not\in S(B)$ is equivalent to choosing an isomorphism
\begin{align}\label{glnsplitplaces}
G(F_v^+) &= \{(g,h)\in GL_n(F_w)\times GL_n(F_{\bar{w}}): (g,h)(^th,^tg)=(I,I)\}\\
&= \{(g, {^tg^{-1}}) \in GL_n(F_w)\times GL_n(F_{\bar{w}})\}\nonumber\\
&\cong GL_n(F_w),\nonumber
\end{align}
and the two isomorphisms differ by $^t(-)^{-1}$. Moreover, it is possible to choose an order $\cO_B$ in $B$ that is stable under $*$ and such that $\cO_{B,w}$ is maximal for all places $w$ of $F$ lying over split places of $F^+$; this gives a model of $G$ defined over $\cO_{F^+}$, such that $G(\cO_{F^+,v})$ is isomorphic under (\ref{glnsplitplaces}) to $GL_n(\cO_{F,w})$ at split places $v\not\in S(B)$.

\subsection{Automorphic forms on definite unitary groups}
Let $H$ be a connected reductive group over the totally real field $F^+$. Let $\bA_{F^+}$ denote the ring of $F^+$-adeles and $\bA_{F^+}^{\infty}$ the ring of finite adeles. Then $H(\mathbb{A}_{F^+})$ decomposes as a restricted direct product $\prod_v' H(F^+_v)$, over all places $v$ of $F^+$, where the components lie in $H(\cO_{F^+,v})$ for all but finitely many $v$. Given a subset $S$ of places, a superscript $X^S$ will always denote $\prod_{v\not\in S}X_v$ and and a subscript $X_S$ will denote $\prod_{v\in S}X_v$. 

If $K_{\infty}$ is a maximal compact subgroup of the infinite adeles $H(F^+_{\infty})$, and $U$ is a compact open subgroup of the finite adeles $H(\bA_{F^+}^{\infty})$, a classical (weight 0) automorphic form of level $K_{\infty}U$ on $H$ is a $\CC$-valued function on the double coset space $$H(F^+)\backslash H(\bA_{F^+})/K_{\infty}U,$$ that satisfies various smoothness and growth conditions. $H$ is called \emph{definite} if $H(F^+_{\infty})$ is itself compact, in which case $K_{\infty} = H(F^+_{\infty})$, and it follows that
$$H(F^+)\backslash H(\bA_{F^+})/K_{\infty}U = H(F^+)\backslash H(\bA_{F^+}^{\infty})/U,$$ which by a theorem of Borel and Harish-Chandra is \emph{finite}. It was observed by Gross in \cite{gross} that automorphic forms on definite groups provide a good framework for investigating algebraic properties of automorphic forms, such as congruences.

Note that our particular group $G=U(B,*)$ is a definite unitary group because $G(F_v^+)\cong U(n)$ for all $v\in S_{\infty}$. Beyond its being definite, Clozel, Harris, and Taylor consider this particular species of unitary group in \cite{cht} because its automorphic forms are well-studied, especially with respect to base-change to $GL_n$, and its algebraic automorphic forms give rise to Galois representations valued in a group $\mathcal{G}$ closely connected to $GL_n$.

Algebraic automorphic forms on $G$ of level $U$ (and weight 0) are functions on $$G(F^+)\backslash G(\bA_{F^+}^{\infty})/U$$ valued in rings more general than $\CC$. \emph{In this section only,} let $k$ be an arbitrary algebraic extension of $\FF_{\ell}$, let $W(k)$ denote the Witt vectors and let $\cK = \Frac(W(k))$ or a ``sufficiently large'' finite extension of $\Frac(W(k))$. Let $\cO$ denote the ring of integers in $\cK$. Fix an isomorphism $\iota:\overline{\cK}\isomto \CC$.

We will specify a level, i.e., a compact open subgroup $U=\prod_vU_v \subset G(\bA_{F^+}^{\infty})$ by fixing various sets of places and requiring that $U_v$ satisfy certain conditions for $v$ in those sets. We would like $U$ to be \emph{sufficiently small}, which means some $U_v$ contains no non-trivial elements of finite order. Fix a finite nonempty set $S_a$ of finite places, each of which is split in $F/F^+$, such that $S_a$ is disjoint from $S_{\ell}\cup S(B)$ and such that, if $v|p$, then $[F(\zeta_p):F]>n$. We assume that $U_v\cong I + \varpi_{v}M_n(\cO_{F^+_v})$ for $v\in S_a$, and this guarantees that $U$ is sufficiently small.

For any $\cO$-algebra $A$, let $S(U,A)$ be the set of locally constant functions
$$\{f: G(F^+)\backslash G(\bA_{F^+}^{\infty}) \to A, \  f(gu) = f(g), u\in U, g\in G(\bA_{F^+}^{\infty})\}$$ for all $u\in U$. Since $U$ is sufficiently small this is a finite free $A$-module (\cite[p.98]{cht}). It is a space of $\ell$-integral automorphic forms in the sense that
\begin{align*}
S(U,\cO)\otimes_{\cO}\overline{\cK} &\isomto \bigoplus_{\pi}\left(\bigotimes_{v\text{ finite}}\pi_v^{U_v}\right)^{\oplus m_{\pi}}\\
f&\mapsto [\phi: g\mapsto \iota(f(g^{\infty}))]
\end{align*}
where $\pi$ runs over all classical automorphic representations of $G(\bA_{F^+})$ over $\mathbb{C}$ such that $\pi_{\infty}$ is the trivial representation, and $m_{\pi}$ is the multiplicity of $\pi$.

From \cite[Prop 9.2]{gross}, this construction is compatible with reduction mod-$\ell$: 
\begin{align*}
S(U,\cO)\otimes_{\cO}k &= S(U,k)\\ 
\End_{\cO}(S(U,\cO))\otimes k &\cong\End_{k}(S(U,k)).
\end{align*}

\subsection{Global Hecke algebras}
Let $T$ be a finite set of finite places containing $S_{\ell}\cup S_B \cup S_a$, all of which split in $F/F^+$. Assume $U_v\cong G(\cO_{F_v^+})\cong GL_n(\cO_{F_v^+})$ for all split places $v\not\in T$, and $U_v$ is hyperspecial for all non-split places $v\not\in T$. Each of the two divisors $w|v$, for $v\not\in T$, gives an isomorphism $$G(F^+_v)\cong GL_n(F_w)=GL_n(F_v^+).$$ Define the Hecke operators $T_w^{(j)}$, $j=1,\dots,n$ to be the double coset operators
$$T_w^{(j)} = [GL_n(\cO_{F_w})\left(\begin{smallmatrix}\varpi_wI_j&\\&I_{n-j} \end{smallmatrix}\right) GL_n(\cO_{F_w})].$$
Define
$$\TT^T(U) := \cO\left[T_w^{(1)},\dots,T_w^{(n)}, (T_w^{(n)})^{-1}:w|v\notin T,\ v\text{ split}\right]$$ to be the $\cO$-subalgebra of $\End_{\cO}(S(U,\cO))$ generated by the operators $$T_w^{(1)},\dots,T_w^{(n)}, (T_w^{(n)})^{-1},$$ for any choice of $w|v$ where $v$ ranges over split places not in $T$. Then $\TT^T(U)$ is a reduced commutative ring which is finite free as a $\cO$-module (\cite[\S 3.4]{cht}), and does not depend on the choices of $w|v$.

Let $\Gamma_F$ be the absolute Galois group. To each maximal ideal $\fm\subset \TT^T(U)$ there is associated a unique continuous semisimple Galois representation
$$\overline{r}_{\fm}:\Gamma_F \to GL_n(\TT^T(U)/\fm)=GL_n(k),$$ unramified away from $T$, such that the $j$'th coefficient of the characteristic polynomial of Frobenius away from $T$ is $(-1)^j(\#k(v))^{j(j-1)/2}T_w^{(j)}$ mod $\fm$, where $\#k(v)$ denotes the order of the residue field of $F^+_v$ (\cite[Prop 3.4.2]{cht}). 

If $\overline{r}_{\fm}$ is absolutely irreducible, it has a natural continuous lifting $r_{\fm}$ to the localization $\TT^T(U)_{\fm}$. We say $\fm$ is \emph{non-Eisenstein} if $\overline{r}_{\fm}$ is absolutely irreducible.

\begin{prop}[\cite{cht} Cor 3.4.5]
\label{sphericalheckeeigenlift}
Suppose $\fm$ is non-Eisenstein and $v_0\in T- (S_{\ell}\cup S(B))$ and $U_{v_0}=G(\cO_{F^+,v_0})$. If $w$ is a prime of $F$ above $v_0$ then there exist $t_1,\dots t_n\in \TT^T(U)_{\fm}$ such that 
$$T_w^{(j)}*f = t_jf,\ \text{ for j=1,\dots,n}$$ for any $f$ in $S(U,\cO)_{\fm}$.
\end{prop}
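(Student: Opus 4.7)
The plan is to exploit the Galois representation $r_\fm : \Gamma_F \to GL_n(\TT^T(U)_\fm)$ (which exists since $\fm$ is non-Eisenstein) together with local-global compatibility at the unramified place $v_0$. The heart of the argument is to show that $r_\fm|_{G_{F_w}}$ is unramified; once this is established, the $t_j$ will be (up to normalization) the coefficients of the characteristic polynomial of $r_\fm(\mathrm{Frob}_w)$, which automatically lie in $\TT^T(U)_\fm$.

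I would begin by noting that $\TT^T(U)$ is reduced and $\cO$-torsion-free, so that $\TT^T(U)_\fm$ embeds into a finite product $\prod_\pi \overline{\cK}$, where $\pi$ runs over classical automorphic representations of $G(\bA_{F^+})$ with $\pi_\infty$ trivial and $\pi^U\neq 0$ that contribute to $S(U,\cO)_\fm\otimes \overline{\cK}$. Since $U_{v_0}=G(\cO_{F^+,v_0})$ is hyperspecial, each such $\pi_{v_0}$ is an unramified principal series of $GL_n(F_w)$. The Galois representation $r_\pi$ attached to $\pi$ lifts $\bar{r}_\fm$, and standard local-global compatibility at unramified places (for the regular algebraic conjugate self-dual automorphic representations at hand) asserts that $r_\pi|_{G_{F_w}}$ is unramified and that the characteristic polynomial of $r_\pi(\mathrm{Frob}_w)$ equals the Hecke polynomial
$$X^n + \sum_{j=1}^n (-1)^j (\#k(v_0))^{j(j-1)/2}\, a_j^\pi\, X^{n-j},$$
where $a_j^\pi\in \overline{\cK}$ is the scalar by which $T_w^{(j)}$ acts on the line $\pi_{v_0}^{U_{v_0}}$.

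Combining these inputs, the composition $\Gamma_F\to GL_n(\TT^T(U)_\fm)\to \prod_\pi GL_n(\overline{\cK})$ sends inertia at $w$ into the kernel of every projection. Since $\TT^T(U)_\fm$ is reduced and embeds into the product, the intersection of these kernels is trivial, so $r_\fm$ itself is unramified at $w$. I then define
$$t_j := (-1)^j (\#k(v_0))^{-j(j-1)/2}\cdot c_j\bigl(r_\fm(\mathrm{Frob}_w)\bigr)\in \TT^T(U)_\fm,$$
where $c_j$ denotes the $j$-th elementary symmetric function of the eigenvalues.

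It remains to check $T_w^{(j)}*f=t_jf$ for every $f\in S(U,\cO)_\fm$. The spherical Hecke operator $T_w^{(j)}$ acts $\cO$-linearly on $S(U,\cO)_\fm$ and commutes with $\TT^T(U)_\fm$, and $S(U,\cO)_\fm$ is $\cO$-torsion-free, so it suffices to verify the identity after extending scalars to $\overline{\cK}$. There $S(U,\cO)_\fm\otimes\overline{\cK}$ decomposes as the direct sum of $\pi$-isotypic pieces on which $T_w^{(j)}$ acts by $a_j^\pi$ (by definition of $a_j^\pi$) and on which $t_j$ also acts by $a_j^\pi$ (by construction of $t_j$ and local-global compatibility applied to $r_\pi$). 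The two operators thus agree on a faithful module and hence agree. The main technical obstacle is the unramifiedness and local-global compatibility of $r_\pi$ at $v_0$, which is a nontrivial but known input for the class of automorphic representations under consideration; everything else is essentially bookkeeping with Chebotarev density and the reducedness of $\TT^T(U)$.
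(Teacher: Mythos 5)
The paper gives no proof of this proposition at all: it is stated as a direct citation of \cite{cht}, Corollary 3.4.5, and the surrounding text (Theorem~\ref{localprops}) merely draws formal consequences from it. Your argument is a correct reconstruction of the standard proof from \emph{loc.\ cit.}: produce $r_{\fm}$ over the reduced, $\cO$-torsion-free ring $\TT^T(U)_{\fm}\hookrightarrow\prod_\pi\overline{\cK}$, deduce unramifiedness at $w$ and the matching of characteristic-polynomial coefficients with normalized Satake eigenvalues component-by-component via local--global compatibility at the hyperspecial place $v_0\notin S_\ell\cup S(B)$, and conclude by faithfulness of $S(U,\cO)_{\fm}\otimes\overline{\cK}$; the only blemish is a stray sign $(-1)^j$ in your normalization of $t_j$ (the paper's convention forces $t_j=(\#k(v_0))^{-j(j-1)/2}c_j(r_{\fm}(\mathrm{Frob}_w))$), which is harmless bookkeeping.
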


\section{Application of Theorem~\ref{universalmodule:conj} to Ihara}\label{applicationtoihara:section}

The $R=\TT$ theorem of \cite{cht} is proven conditionally on a conjecture, known as Ihara's lemma. As explained in the introduction, we can apply Theorem~\ref{universalmodule:conj} to reduce this conjecture to an easier statement. We now give more details.

From this section onward, we reinstate our assumption that $k$ is algebraically closed.

\begin{conj}[\cite{cht} \S5.3: weak Ihara's lemma]\label{weakihara:conj}
Let $U\subset G(\bA_{F^+}^{\infty})$ be a sufficiently small open subgroup. Suppose
\begin{itemize}
\item $v_0\in T - (S_{\ell}\cup S(B)\cup S_a)$ is a place where $U_{v_0}\cong G(\cO_{F_{v_0}^+})$,
\item $\fm\subset \TT^T(U)$ is a non-Eisenstein maximal ideal,
\item $f$ is an element of $S(U,k)[\fm]$.
\end{itemize}
If $\pi$ is an irreducible $k[G(F_{v_0}^+)]$-submodule of $$\langle G(F_{v_0}^+)\cdot f\rangle\subset S(U^{v_0},k),$$ then $\pi$ is generic.
\end{conj}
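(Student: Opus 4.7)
The plan is not to prove Conjecture~\ref{weakihara:conj} outright—this is the open Ihara lemma—but rather to use the universal module machinery built in the preceding sections to reduce it to the strictly weaker statement that the cyclic module $V := \langle G(F_{v_0}^+)\cdot f\rangle$ is itself generic.

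The first step is to upgrade $f$ from a mod-$\fm$ eigenform for $\TT^T(U)$ to a spherical Hecke eigenvector at $v_0$ itself. Because $\fm$ is non-Eisenstein and $U_{v_0}\cong G(\cO_{F^+,v_0})$, Proposition~\ref{sphericalheckeeigenlift} furnishes scalars $t_1,\dots,t_n\in \TT^T(U)_{\fm}$ with $T_w^{(j)}*f = t_j f$. Reducing modulo $\fm$ produces a character $\lambda:\cZ_{v_0}\to k$ with $z*f = \lambda(z) f$ for all $z\in \cZ_{v_0}$, so that $f\in V^{K,\lambda}$ for $K = GL_n(\cO_{F,w_0})$. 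By construction $f$ is a cyclic $k[G(F_{v_0}^+)]$-generator of $V$, so condition (1) of Corollary~\ref{propertiesneeded} is automatic.

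Granting the \emph{a priori} weaker hypothesis that $V$ is generic, condition (2) of Corollary~\ref{propertiesneeded} is also satisfied, and the corollary supplies a $G(F_{v_0}^+)$-equivariant isomorphism $\cM_{\lambda,k}\isomto V$. Theorem~\ref{universalmodule:conj:k} embeds $\cM_{\lambda,k}$ into $\Ind_U^G\psi$, so its socle is generic; Theorem~\ref{lazarus:thm} (or Proposition~\ref{derivativefreerankone:cor}) guarantees this socle is irreducible, since $\cM_{\lambda,k}$ has a unique generic Jordan--Hölder constituent. Transporting along the isomorphism, $V$ has a unique irreducible submodule, which is generic, thereby establishing the conjecture.

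The essential difficulty is therefore concentrated in the reduction target: showing that $V$ itself admits a nonzero Whittaker functional, equivalently that $V^{(n)}\neq 0$. This is genuinely easier than the original conjecture, since one needs only a single Whittaker vector somewhere in the cyclic module rather than genericity of every irreducible submodule. The natural next step would be to exploit the additional rigidity available in the quasi-banal case (alluded to in the introduction as Corollary~\ref{quasibanaliwahorirestatement}) by computing $V^I$ via the Iwahori--Hecke presentation of $\cM_{\lambda,k}^I\cong \cA_k\otimes_{\cZ_k,\lambda}k$ from Section~\ref{quasibanal:section}, converting the genericity of $V$ into a dimension bound on the span of the images of $f$ under Iwahori--Hecke operators at $w_0$.
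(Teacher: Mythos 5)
Your proposal is correct and follows essentially the same route as the paper: the statement is an open conjecture, and the paper likewise does not prove it but reduces it (via Theorem~\ref{localprops}, Corollary~\ref{propertiesneeded}, and Theorem~\ref{universalmodule:conj}) to the genericity of the cyclic module $\langle G(F_{v_0}^+)\cdot f\rangle$, exactly as you do. Your closing remarks about the quasi-banal refinement also mirror the paper's Corollary~\ref{quasibanaliwahorirestatement}.
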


Toward the goal of Conjecture~\ref{weakihara:conj}, we have the following.
\begin{thm}\label{localprops}
For $U$, $v_0$, and $\fm$ as in Conjecture~\ref{weakihara:conj}, $S(U,k)[\fm]$ is a spherical Hecke eigenspace at $v_0$. More precisely if $w|v_0$, there is a homomorphism $$\lambda:k[GL_n(\cO_{F,w_0})\backslash GL_n(F_{w_0})/ GL_n(\cO_{F,w_0})]\to k$$ such that 
$$T_{w}^{(j)}* f = \lambda(T_{w}^{(j)})f\ \text{\ \  $j=1,\dots,n$},$$ for all $f\in S(U,k)[\fm]$.
\end{thm}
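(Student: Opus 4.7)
The plan is to deduce Theorem~\ref{localprops} directly from Proposition~\ref{sphericalheckeeigenlift}, by reducing modulo $\ell$ and exploiting the hypothesis that $f$ is $\fm$-torsion. All of the Galois-theoretic content (base change to $GL_n$, the lifted representation $r_{\fm}$, matching of Satake parameters with Frobenius eigenvalues) is already packaged into that proposition, so I would treat it as a black box.

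First, because $v_0 \in T - (S_{\ell} \cup S(B))$ and $U_{v_0} = G(\cO_{F^+, v_0})$, Proposition~\ref{sphericalheckeeigenlift} supplies elements $t_1, \dots, t_n \in \TT^T(U)_{\fm}$ such that $T_w^{(j)} * g = t_j g$ for every $g \in S(U, \cO)_{\fm}$ and every $j$. The element $t_n$ is moreover a unit, since $T_w^{(n)}$ is invertible on $S(U,\cO)$. I then set $\lambda(T_w^{(j)}) := t_j \bmod \fm \in \TT^T(U)_{\fm}/\fm = k$.

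Next I would descend to characteristic $\ell$. The compatibility $S(U,\cO)\otimes_{\cO}k = S(U,k)$ of $\TT^T(U)$-modules lets the identity $T_w^{(j)}*g = t_j g$ pass to $S(U,k)_{\fm}$, with $\bar{t}_j$ (the image of $t_j$ in $\TT^T(U)_{\fm}/\ell$) replacing $t_j$. Any $f \in S(U,k)[\fm]$ lies in $S(U,k)_{\fm}$, because $S(U,k)$ is a finite-dimensional $k$-space on which the Artinian quotient $\TT^T(U)/\ell$ acts, and hence decomposes as a direct sum of localizations indexed by maximal ideals, with all $\fm$-torsion supported at $\fm$. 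Therefore $T_w^{(j)}*f = \bar{t}_j f$, and since $\bar{t}_j - \lambda(T_w^{(j)}) \in \fm\TT^T(U)_{\fm}$ annihilates the $\fm$-torsion element $f$, we obtain $T_w^{(j)}*f = \lambda(T_w^{(j)}) f$ for all $j$.

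Finally, I would extend $\lambda$ to a $k$-algebra homomorphism on the full spherical Hecke algebra $\cZ_{v_0}$. Via the Satake isomorphism $\cZ_{v_0} \cong k[X_1^{\pm 1},\dots,X_n^{\pm 1}]^{S_n}$, the operators $T_w^{(1)}, \dots, T_w^{(n)}, (T_w^{(n)})^{-1}$ generate $\cZ_{v_0}$ as a $k$-algebra; since $\lambda(T_w^{(n)}) \neq 0$ (because $t_n$ is a unit in $\TT^T(U)_{\fm}$), the assignment $T_w^{(j)} \mapsto \lambda(T_w^{(j)})$ extends uniquely to the required homomorphism $\lambda : \cZ_{v_0} \to k$. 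There is no substantive obstacle here: once the characteristic-zero input of Proposition~\ref{sphericalheckeeigenlift} is in hand, this is a routine mod-$\ell$ bookkeeping, with the only subtle point being the passage from the $\fm$-local identity in $S(U,\cO)_{\fm}$ to an identity on $\fm$-torsion in $S(U,k)$.
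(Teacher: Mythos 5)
Your proof is correct and follows essentially the same route as the paper's: both deduce the statement formally from Proposition~\ref{sphericalheckeeigenlift} by decomposing $S(U,\cO)$ and $S(U,k)$ into localizations at the maximal ideals of the semilocal ring $\TT^T(U)$, reducing the eigenvalue identity mod $\ell$, and observing that the action of $\TT^T(U)_{\fm}\otimes_{\cO}k$ on the $\fm$-torsion factors through the residue field $k$. Your final step, extending $\lambda$ from the generators $T_w^{(1)},\dots,T_w^{(n)},(T_w^{(n)})^{-1}$ to the full spherical Hecke algebra via Satake and checking $\lambda(T_w^{(n)})\neq 0$, is a detail the paper leaves implicit ("The result follows"), and it is handled correctly.
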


\begin{proof}
This is just a formal consequence of Proposition~\ref{sphericalheckeeigenlift}. Take $U$, $v_0$, and $\fm$ as in Conjecture~\ref{weakihara:conj}.

Since $\TT^T(U)$ is finite free over the complete DVR $\cO$, it is semilocal, and we can write $\TT^T(U) = \prod_{\fm'}\TT^T(U)_{\fm'}$, a product over the localizations at each maximal ideal. Then $S(U,\cO)$ and $S(U,k)$ decompose as the product of their localizations $\prod_{\fm'}S(U,\cO)_{\fm'}$ and $\prod_{\fm'}S(U,k)_{\fm'}$, respectively. In particular, $S(U,k)[\fm]=S(U,k)_{\fm}[\fm]$.

We also have $S(U,\cO)_{\fm}\otimes_{\cO}k=S(U,k)_{\fm}$, and there is a natural map 
$$\End_{\cO}(S(U,\cO)_{\fm})\to \End_k(S(U,k)_{\fm}).$$ The image of each Hecke operator $T_w^{(j)}$ in this map is the Hecke operator $T_w^{(j)}$, by definition.

Thus we conclude that the action of the Hecke operator $T_w^{(j)}$ on $S(U,k)_{\fm}$ is given by the reduction mod-$\ell$ of the scalar $t_j\in \TT^T(U)_{\fm}$ appearing in Proposition~\ref{sphericalheckeeigenlift}. 

The action of $\TT^T(U)_{\fm}\otimes_{\cO}k$ on $S(U,k)_{\fm}[\fm]$ factors through the residue field $\TT^T(U)_{\fm}/\fm=k$. We conclude that there are scalars $\overline{t_j}\in k$ such that the action of $T_w^{(j)}$ on $S(U,k)_{\fm}[\fm]$ is given by multiplication by $\overline{t_j}$. The result follows.
\end{proof}

We now state a weaker conjecture:
\begin{conj}\label{weakerihara:conj}
Assume the setup of Conjecture~\ref{weakihara:conj}. The $k[G(F_{v_0}^+)]$-module $\langle G(F_{v_0}^+)\cdot f\rangle$ is generic.
\end{conj}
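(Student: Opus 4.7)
The plan is to show that $V:=\langle G(F_{v_0}^+)\cdot f\rangle$ admits a nonzero Whittaker functional, equivalently that the $n$-th derivative $V^{(n)}$ is nonzero. By Theorem~\ref{localprops}, $f$ lies in $V^{K_{v_0},\lambda}$ for a spherical Hecke character $\lambda:\cZ_{v_0}\to k$, so the universal property of $\cM_{\lambda,k}$ yields a surjection $\cM_{\lambda,k}\twoheadrightarrow V$. By exactness of $(-)^{(n)}$ and Proposition~\ref{derivativefreerankone:cor}, the induced map $k\cong(\cM_{\lambda,k})^{(n)}\twoheadrightarrow V^{(n)}$ forces $V^{(n)}$ to be either $0$ or $k$; the conjecture is equivalent to ruling out the former possibility.

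The most direct attack is to globalize and use characteristic-zero automorphic input. Lift $f$ to $\tilde{f}\in S(U,\cO)_\fm$, and consider $\tilde{V}:=\langle G(F_{v_0}^+)\cdot\tilde{f}\rangle\otimes_\cO\overline{\cK}$. The non-Eisenstein hypothesis attaches absolutely irreducible Galois representations to each constituent, and via base change to $GL_n(\bA_F)$ together with Jacquet--Langlands, each contributing automorphic representation transfers to a cuspidal automorphic representation of $GL_n$. Cuspidality implies global genericity, hence every local component at $v_0$ is generic and $\tilde{V}^{(n)}\neq 0$. One would then try to propagate this nonvanishing down to the mod-$\ell$ reduction $V^{(n)}$ via an integral normalization of a Whittaker functional.

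The main obstacle --- and the reason Ihara's lemma remains open for $n>2$ --- is integrality. A Whittaker functional defined over $\overline{\cK}$ need not descend to $\cO$ without denominators divisible by $\ell$, and a priori the specialization map could carry the nonzero functional on $\tilde{V}^{(n)}$ to $0\in V^{(n)}$. In the quasi-banal regime, Proposition~\ref{Ifixedcyclic} and Corollary~\ref{quasibanaliwahorirestatement} reduce the question to a finite-dimensional Iwahori--Hecke dimension count at $v_0$, which in turn one could attempt to verify by importing information from an auxiliary split place via strong approximation, or from the action of $\overline{r}_\fm$ through local-global compatibility. For arbitrary $\ell$, I expect that new input is unavoidable: either mod-$\ell$ torsion vanishing results on the cohomology of the relevant unitary Shimura variety (in the spirit of Thorne's work), or a direct analysis of congruences between Whittaker periods controlling the $\ell$-valuation of the integral Whittaker pairing. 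This last step is precisely the one that the present paper deliberately leaves open.
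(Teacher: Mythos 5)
The statement you were asked to prove is a \emph{conjecture}: the paper does not prove it, and neither do you. The paper's contribution around Conjecture~\ref{weakerihara:conj} is only the equivalence with Conjecture~\ref{weakihara:conj}, obtained by feeding Theorem~\ref{localprops} into Corollary~\ref{propertiesneeded}, which rests on Theorem~\ref{universalmodule:conj}. Your first paragraph reproduces exactly that circle of ideas (the surjection $\cM_{\lambda,k}\twoheadrightarrow V$ from the universal property, exactness of $(-)^{(n)}$, and Proposition~\ref{derivativefreerankone:cor} forcing $V^{(n)}\in\{0,k\}$), so the reduction of the conjecture to the single nonvanishing statement $V^{(n)}\neq 0$ is correct and is consistent with how the paper frames things. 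But that reduction is where the paper stops, deliberately.

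The genuine gap is your final step: transferring the nonvanishing of the Whittaker functional from the characteristic-zero object $\tilde{V}$ down to $V^{(n)}$ over $k$. Nothing in your sketch controls the $\ell$-adic valuation of the integral Whittaker pairing; as you yourself note, an $\cO$-normalized functional that is nonzero on $\tilde{V}$ can a priori vanish identically after reduction mod $\ell$, and no result in the paper (flatness of $\cM_{\lambda,R}$, Corollary~\ref{freeness}, Proposition~\ref{Ifixedcyclic}, Corollary~\ref{quasibanaliwahorirestatement}) rules this out --- Corollary~\ref{quasibanaliwahorirestatement} merely restates the problem as an $n!$-dimensionality claim for $\cA_{v_0}*f$ in the quasi-banal case, it does not verify it. The characteristic-zero input (base change to $GL_n$, Jacquet--Langlands, cuspidal implies generic) is also stated loosely --- one needs the non-Eisenstein hypothesis to guarantee the base changes are actually cuspidal rather than isobaric sums --- but even granting all of that, the mod-$\ell$ descent is precisely the open content of Ihara's lemma for $n>2$, so your proposal is an honest plan of attack plus an acknowledged missing step, not a proof.
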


As a corollary of Theorem~\ref{universalmodule:conj}, we obtain the following.

\begin{cor}
Conjecture~\ref{weakerihara:conj} and Conjecture~\ref{weakihara:conj} are equivalent, and both are equivalent to $\langle G(F_{v_0}^+)\cdot f\rangle$ having exactly one irreducible Jordan--Holder constituent that is generic.
\end{cor}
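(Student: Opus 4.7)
The plan is to derive every equivalence from Corollary~\ref{propertiesneeded}, which already packages together Theorems~\ref{lazarus:thm} and \ref{universalmodule:conj:k}. Write $\pi := \langle G(F_{v_0}^+)\cdot f\rangle$ and $K_{v_0} := G(\cO_{F_{v_0}^+})\cong GL_n(\cO_{F,w})$. By Theorem~\ref{localprops}, $f$ lies in $\pi^{K_{v_0},\lambda}$ for a spherical Hecke character $\lambda$; since $f$ also generates $\pi$, the universal property gives a canonical surjection $p:\cM_{\lambda,k}\twoheadrightarrow \pi$ sending $1_{K_{v_0}}\otimes 1\mapsto f$. Every Jordan--Hölder constituent of $\pi$ is therefore a constituent of $\cM_{\lambda,k}$, and by Theorem~\ref{lazarus:thm} (which shows $\cM_{\lambda,k}$ has the semisimplification of $i_B^G\chi$) $\cM_{\lambda,k}$ has finite length; hence so does $\pi$, and in particular $\pi$ admits irreducible subrepresentations.

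Next I extract the uniqueness of the generic constituent. Proposition~\ref{derivativefreerankone:cor} gives $(\cM_{\lambda,k})^{(n)}\cong k$, and exactness of the top derivative forces $\cM_{\lambda,k}$ to have at most one generic JH constituent. Theorem~\ref{universalmodule:conj:k} embeds $\cM_{\lambda,k}$ into $\Ind_U^G\psi$, whose socle is necessarily generic, so $\cM_{\lambda,k}$ has exactly one generic JH constituent. Pulling this through $p$ shows $\pi$ has at most one generic JH constituent, and by exactness of $(-)^{(n)}$ it has one precisely when $\pi$ itself is generic. This gives the equivalence of Conjecture~\ref{weakerihara:conj} with the statement that $\pi$ has exactly one generic JH constituent.

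Finally I check Conjecture~\ref{weakihara:conj}$\iff$Conjecture~\ref{weakerihara:conj}. The forward direction is immediate from the first paragraph: $\pi$ has an irreducible submodule, which is generic by Conjecture~\ref{weakihara:conj}, so $\pi$ inherits a nonzero map to $\Ind_U^G\psi$ and is generic. Conversely, if $\pi$ is generic then both hypotheses of Corollary~\ref{propertiesneeded} hold (condition (1) via Theorem~\ref{localprops}, condition (2) by assumption), so $p:\cM_{\lambda,k}\to\pi$ is an isomorphism. Composing with the injection $\Lambda:\cM_{\lambda,k}\hookrightarrow \Ind_U^G\psi$ of Theorem~\ref{universalmodule:conj:k} embeds $\pi$ into $\Ind_U^G\psi$; every irreducible submodule of $\pi$ then admits a nonzero $G$-equivariant map to $\Ind_U^G\psi$ and is therefore generic. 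There is no substantive obstacle here: the entire deduction is formal once $f$ is identified as an element of $\pi^{K_{v_0},\lambda}$ via Theorem~\ref{localprops}, and the mild point about existence of irreducible subrepresentations is handled by the finite length provided by Theorem~\ref{lazarus:thm}.
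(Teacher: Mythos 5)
Your argument is correct and follows essentially the same route as the paper: identify $f$ as a generator in $\pi^{K_{v_0},\lambda}$ via Theorem~\ref{localprops}, apply Corollary~\ref{propertiesneeded} to get $\cM_{\lambda,k}\cong\pi$ when $\pi$ is generic, and compose with the injection of Theorem~\ref{universalmodule:conj} to embed $\pi$ in the Whittaker space; the uniqueness of the generic constituent via $(\cM_{\lambda,k})^{(n)}\cong k$ is exactly the paper's observation following Proposition~\ref{derivativefreerankone:cor}. Your write-up merely makes explicit some steps (finite length, existence of irreducible submodules) that the paper leaves implicit.
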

\begin{proof}
Theorem~\ref{localprops} shows that the $k[G(F_{v_0}^+)]$-module $\langle G(F_{v_0}^+)\cdot f\rangle$ satisfies hypotheses (1) and (2) of Corollary~\ref{propertiesneeded}. Thus the result is given by the conclusion of Corollary~\ref{propertiesneeded}, combined with the conclusion of Theorem~\ref{universalmodule:conj}.
\end{proof}

For its applications to $R=\TT$ theorems, it suffices to know the truth of Conjecture~\ref{weakihara:conj} in the quasi-banal setting (c.f. \cite[Prop 5.3.5]{cht}), where we can refine our result slightly thanks to our work in Subsection~\ref{quasibanal:section}. In the notation of Conjecture~\ref{weakihara:conj}, let $I_{v_0}$ be the Iwahori subgroup of $G(F_{v_0}^+)\cong GL_n(F_{w_0})$, let $\cH(G(F_{v_0}^+),I_{v_0})$ be the local Iwahori--Hecke algebra at $v_0$, and let $\cA_{v_0}$ be the subalgebra $k[I_{v_0}\backslash G(F_{v_0}^+)/ G(\cO_{F^+,v_0})]$.

\begin{cor}\label{quasibanaliwahorirestatement}
Let $v_0$, $f$, $U$ be as in Conjecture~\ref{weakihara:conj}, suppose $\ell$ is quasi-banal for $\#k(v_0)$, and let $\widetilde{U} := (\prod_{v\neq v_0}U_v)\times I_{v_0}$. Then Conjecture~\ref{weakihara:conj} is equivalent to the following statement: the cyclic $\cA_{v_0}$-submodule of $S(\widetilde{U},k)$ generated by $f$ has dimension $n!$.
\end{cor}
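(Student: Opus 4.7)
The plan is to combine the main theorem of the paper with the structural results of Subsection~\ref{quasibanal:section} and Vignéras' equivalence of categories in the quasi-banal setting. First, by the preceding corollary, Conjecture~\ref{weakihara:conj} is equivalent to the $k[G(F_{v_0}^+)]$-module $\pi := \langle G(F_{v_0}^+) \cdot f \rangle$ being generic. By Theorem~\ref{localprops}, there is a character $\lambda : \cZ_{v_0} \to k$ of the spherical Hecke algebra at $v_0$ such that $f \in \pi^{K_{v_0}, \lambda}$, where $K_{v_0} := G(\cO_{F^+, v_0})$. The universal property then supplies a canonical surjection $\cM_{\lambda, k} \twoheadrightarrow \pi$ sending $1_{K_{v_0}} \otimes 1 \mapsto f$, and by Corollary~\ref{propertiesneeded} combined with Theorem~\ref{universalmodule:conj:k}, this surjection is an isomorphism precisely when $\pi$ is generic. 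So the task is to translate the isomorphism condition into the dimension condition on $\cA_{v_0} \cdot f$.

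Next, I would invoke the quasi-banal equivalence of categories recalled in Subsection~\ref{quasibanal:section}: the functor $V \mapsto V^{I_{v_0}}$ is an equivalence of abelian categories between smooth $k[G(F_{v_0}^+)]$-modules generated by their $I_{v_0}$-fixed vectors and $\cH(G(F_{v_0}^+), I_{v_0})_k$-modules. Both $\cM_{\lambda, k}$ and $\pi$ lie in this subcategory, since each is cyclic on a $K_{v_0}$-fixed (hence $I_{v_0}$-fixed) vector. Therefore the surjection $\cM_{\lambda, k} \to \pi$ is an isomorphism if and only if the induced surjection $\cM_{\lambda, k}^{I_{v_0}} \twoheadrightarrow \pi^{I_{v_0}}$ is an isomorphism of finite-dimensional $k$-vector spaces.

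Finally, I would identify both sides explicitly. The corollary at the end of Subsection~\ref{quasibanal:section} gives $\dim_k \cM_{\lambda, k}^{I_{v_0}} = n!$. For $\pi^{I_{v_0}}$, cyclicity of $\pi$ together with the equivalence of categories implies $\pi^{I_{v_0}} = \cH(G(F_{v_0}^+), I_{v_0})_k \cdot f$; then the Bernstein decomposition $\cH(G(F_{v_0}^+), I_{v_0})_k = \cA_{v_0} \cdot \cH_{W, v_0}$, combined with the observation that the $K_{v_0}$-fixed vector $f$ is acted on by $\cH_{W, v_0}$ via the trivial character $\mathbf{1}$, collapses this to $\cH(G(F_{v_0}^+), I_{v_0})_k \cdot f = \cA_{v_0} \cdot f$. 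The surjection of the previous paragraph thus becomes $\cM_{\lambda, k}^{I_{v_0}} \twoheadrightarrow \cA_{v_0} \cdot f$, and since the source has dimension $n!$ it is an isomorphism exactly when $\dim_k \cA_{v_0} \cdot f = n!$. Assembling the three steps gives the claimed equivalence. The main technical point is the identification $\pi^{I_{v_0}} = \cA_{v_0} \cdot f$ via the Bernstein presentation and the $\cH_{W, v_0}$-collapse on $K_{v_0}$-fixed vectors; everything else is assembly of results already proved in the paper.
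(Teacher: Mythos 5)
Your proof is correct and follows essentially the same route as the paper's: reduce Conjecture~\ref{weakihara:conj} to the surjection $\cM_{\lambda,k}\twoheadrightarrow\langle G(F_{v_0}^+)\cdot f\rangle$ being an isomorphism, pass to $I_{v_0}$-fixed vectors via the quasi-banal equivalence of categories, identify $\langle G(F_{v_0}^+)\cdot f\rangle^{I_{v_0}}$ with $\cA_{v_0}*f$ by the $\cH_{W}$-collapse on the $K_{v_0}$-fixed vector $f$, and compare dimensions against $\dim_k\cM_{\lambda,k}^{I_{v_0}}=n!$. The only stylistic difference is that you invoke the equivalence of categories explicitly for both directions, whereas the paper obtains cyclicity of $\langle G(F_{v_0}^+)\cdot f\rangle^{I_{v_0}}$ directly from the surjection $\cM_{\lambda,k}^{I}\twoheadrightarrow\langle G(F_{v_0}^+)\cdot f\rangle^{I}$; both are fine.
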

\begin{proof}
By Theorem~\ref{localprops} there is some $\lambda:\cZ\to k$ and a map $\cM_{\lambda,k}\to \langle G(F_{v_0}^+)\cdot f\rangle$ whose image contains $f$. Thus $\langle G(F_{v_0}^+)\cdot f\rangle$ is a quotient of $\cM_{\lambda,k}$, hence $\langle G(F_{v_0}^+)\cdot f\rangle^I$ is a quotient of $\cM_{\lambda,k}^I$. Thus $\langle G(F_{v_0}^+)\cdot f\rangle^I$ is cyclic as an $\cH(G,I)$-module. Since $\cH_W$ acts trivially on $f$, we have that $\langle G(F_{v_0}^+)\cdot f\rangle^I\cong \cA* f$ inside $S(\widetilde{U},k)$. Thus, if $\cA* f$ had dimension $n!$, $\langle G(F_{v_0}^+)\cdot f\rangle^I$ would be isomorphic to $\cM_{\lambda,k}^I$ and $\langle G(F_{v_0}^+)\cdot f\rangle$ would be isomorphic to $\cM_{\lambda,k}$. The other direction is immediate, since we have already established that Conjecture~\ref{weakihara:conj} is equivalent to the map $\cM_{\lambda,k}\to \langle G(F_{v_0}^+)\cdot f\rangle$ being an isomorphism, and $\cM_{\lambda,k}^I$ has dimension $n!$.
\end{proof}


\begin{thebibliography}{CHT08}

\bibitem[Bel02]{bellaiche_these}
Jo{\"{e}}l Bella{\"{i}}che.
\newblock {\em Congruences endoscopiques et repr{\'{e}}sentations
  {G}aloisiennes}.
\newblock PhD thesis, L'{U}niversit{\'{e}} Paris {XI}-Orsay, 2002.

\bibitem[BH03]{bh_whitt}
Colin~J. Bushnell and Guy Henniart.
\newblock Generalized {W}hittaker models and the {B}ernstein center.
\newblock {\em American Journal of Mathematics}, 125:513--547, 2003.

\bibitem[BK98]{bushnell_kutzko}
Colin~J. Bushnell and Philip~C. Kutzko.
\newblock Smooth representations of reductive {$p$}-adic groups: structure
  theory via types.
\newblock {\em Proc. London Math. Soc.}, 77(3):582--634, 1998.

\bibitem[BO03]{bel_ot}
Jo{\"{e}}l Bella{\"{i}}che and Ania Otwinowska.
\newblock Platitude du module universel pour {$GL_3$} en caract{\'{e}}ristique
  non banale.
\newblock {\em Bull. Soc. Math. France}, 131:507--525, 2003.

\bibitem[Boy20]{boyer_ihara}
Pascal Boyer.
\newblock Ihara's lemma and level raising in higher dimension.
\newblock {\em Journal de l'Inst. Math. de Jussieu}, 2020.
\newblock to appear.

\bibitem[BZ77]{b-zI}
I.~N. Bernstein and A.~V. Zelevinsky.
\newblock Induced representations of reductive {$\mathfrak{p}$}-adic groups.
  {I}.
\newblock {\em Annales scientifiques de l'\'{E}.N.S.}, 1977.

\bibitem[CHT08]{cht}
Laurent Clozel, Michael Harris, and Richard Taylor.
\newblock Automorphy for some l-adic lifts of automorphic mod-l {G}alois
  representations.
\newblock {\em Publ. math. IHES}, (108), 2008.

\bibitem[Dat09]{dat_finitude}
Jean-Fran{\c{c}}ois Dat.
\newblock Finitude pour les repr{\'{e}}sentations lisses des groupes
  {$p$}-adiques.
\newblock {\em J. Inst. Math. Jussieu}, 8:261--333, 2009.

\bibitem[EH14]{eh}
Matthew Emerton and David Helm.
\newblock The local {Langlands} correspondence for {$GL(n)$} in families.
\newblock {\em Ann. Sci. E.N.S.}, 2014.

\bibitem[Eme11]{em_lg}
Matthew Emerton.
\newblock Local-global compatibility in the {$p$}-adic langlands programme for
  {$GL_{2/\mathbb{Q}}$}.
\newblock 2011.

\bibitem[Gro99]{gross}
Benedict Gross.
\newblock Algebraic modular forms.
\newblock {\em Israel Journal of Mathematics}, 113:61--93, 1999.

\bibitem[Gro14]{gk_univ}
Elmar Gro{\ss}e{-}{K}l\"{o}nne.
\newblock On the universal module of {$p$}-adic spherical hecke algebras.
\newblock {\em American Journal of Mathematics}, 136(3):599--652, 2014.

\bibitem[Hel16]{h_bern}
David Helm.
\newblock The {Bernstein} center of the category of smooth
  {$W(k)[GL_n(F)]$}-modules.
\newblock {\em Forum of math, sigma}, 4, 2016.

\bibitem[HPS79]{howe_ps}
R.~Howe and I.I. Piatetski-Shapiro.
\newblock A counterexample to the ``generalized {R}amanujan conjecture'' for
  (quasi)-split groups.
\newblock In {\em Automorphic forms, representations, and {L}-functions
  ({P}roc. {S}ympos. {P}ure {M}ath., {V}ol 33, {P}art 1, {C}orvallis 1977)},
  pages 315--322, Providence, {R}.{I}., 1979. Amer. Math. Soc.

\bibitem[Kat81]{kato_eigen}
Shin{-}ichi Kato.
\newblock On eigenspaces of the {H}ecke algebra with respect to a good maximal
  compact subgroup of a {$p$}-adic reductive group.
\newblock {\em Math. Ann.}, (257):1--7, 1981.

\bibitem[Laz98]{lazarus}
Xavier Lazarus.
\newblock Module universel en caract{\'{e}}ristique {$l>0$} associ{\'{e}}
  {\`{a}} un caract{\`{e}}re de l'alg{\`{e}}bre de {H}ecke de {$GL(n)$} sur un
  corps {$p$}-adique, avec {$l\neq p$}.
\newblock {\em Journal of algebra}, 213:662--686, 1998.

\bibitem[Laz00]{lazarus_these}
Xavier Lazarus.
\newblock {\em Module universel non ramifi{\'{e}} pour un groupe r{\'{e}}ductif
  p-adique}.
\newblock PhD thesis, L'{U}niversit{\'{e}} {P}aris-{S}ud, 2000.

\bibitem[MS]{man_shot}
Jeffrey Manning and Jack Shotton.
\newblock Ihara's lemma for shimura curves over totally real fields via
  patching.
\newblock {\em Math. Annalen}.
\newblock to appear.

\bibitem[MW86]{mw_zel}
C.~Moeglin and J.~L. Waldspurger.
\newblock Sur l'involution de {Z}elevinski.
\newblock {\em Journal f{\"{u}}r die reine und angewandte {M}athematik},
  372:136--177, 1986.

\bibitem[PR94]{pr}
Vladimir Platonov and Andrei Rapinchuk.
\newblock {\em Algebraic groups and number theory}.
\newblock Academic press, inc., {S}an {D}iego, 1994.

\bibitem[Shi76]{shintani}
Takuro Shintani.
\newblock On an explicit formula for class-1 ``whittaker functions'' on
  {$GL_n$} over {$\mathfrak{P}$}-adic fields.
\newblock {\em Proc. Japan Acad.}, (52), 1976.

\bibitem[SL96]{serre_letters}
J.-P. Serre and R.~Livn{\'{e}}.
\newblock Two letters on quaternions and modular forms (mod p).
\newblock {\em Israel Journal of Mathematics}, 95:281--299, 1996.

\bibitem[Sor16]{sorensen_ihara}
Claus~M. Sorensen.
\newblock The local langlands correspondence in families and {I}hara's lemma
  for {$U(n)$}.
\newblock {\em Journal of Number Theory}, 164:127--165, 2016.

\bibitem[Tay08]{ihara_avoidance}
Richard Taylor.
\newblock Automorphy for some l-adic lifts of automorphic mod l
  representations. {II}.
\newblock {\em Pub. Math. IHES}, (108):183--239, 2008.

\bibitem[Tho14]{thorne_gln}
Jack Thorne.
\newblock Raising the level for {$GL(n)$}.
\newblock {\em Forum of mathematics sigma}, 2, 2014.

\bibitem[Vig96]{vig}
Marie-France Vign\'{e}ras.
\newblock {\em Representations {$\ell$}-modulaires d'un groupe reductif
  p-adique avec {$\ell$} different de {$p$}}.
\newblock Boston: Birkhauser, 1996.

\bibitem[Vig04]{vig_whitt}
Marie-France Vign\'{e}ras.
\newblock On highest {W}hittaker models and integral structures.
\newblock {\em Contributions to Automorphic Forms, Geometry, and Number
  theory}, 2004.

\end{thebibliography}
\end{document}